\newcommand{\A}{\mathcal{A}}
\newcommand{\EE}{\mathbb{E}}
\newcommand{\RR}{\mathbb{R}}
\newcommand{\PP}{\mathbb{P}}
\newcommand{\B}{\mathcal{B}}
\newcommand{\mC}{\mathcal{C}}
\newcommand{\nel}{\left\|}
\newcommand{\ner}{\right\|}
\newcommand{\norm}[1]{\lVert#1\rVert}
\newcommand{\Norm}[1]{\left\lVert#1\right\rVert}
\newtheorem{thm}{Theorem}[section]
\newtheorem{assu}{Assumption}[section]
\newtheorem{proposition}{Proposition}[section]
\newtheorem{lem}{Lemma}[section]
\newtheorem{cor}{Corollary}[section]
\theoremstyle{definition}
\newtheorem{defn}{Definition}[section]
\theoremstyle{remark}
\newtheorem{rmk}{Remark}[section]
\begin{document}
\title{On the mean-field limit for the \\ Vlasov-Poisson-Fokker-Planck system}

\author{Hui Huang\thanks{Department of Mathematics, Simon Fraser University, Burnaby, BC, Canada. Email: hha101@sfu.ca}, 
	Jian-Guo Liu\thanks{Departments of Physics and Mathematics,
		Duke University, Durham, NC, USA. Email: jliu@phy.duke.edu}, 
	Peter Pickl\thanks{Mathematisches Institut,
		Universit\"{a}t M\"{u}nchen, M\"{u}nchen, Germany. Email: pickl@math.lmu.de}
		\thanks{
		Duke Kunshan University, Kunshan, Jiangsu, China. Email: peter.pickl@duke.edu}
		}
\maketitle
\begin{abstract}
We rigorously justify the mean-field limit of an $N$-particle system subject to   Brownian motions and  interacting through the Newtonian potential  in $\RR^3$. Our result leads to a derivation of the Vlasov-Poisson-Fokker-Planck (VPFP) equations  from the regularized microscopic $N$-particle system. More precisely, we show that the maximal distance between the exact microscopic trajectories and the mean-field trajectories   is bounded by $N^{-\frac{1}{3}+\varepsilon}$ ($\frac{1}{63}\leq\varepsilon<\frac{1}{36}$) with a blob size of $N^{-\delta}$  ($\frac{1}{3}\leq\delta<\frac{19}{54}-\frac{2\varepsilon}{3}$) up to a probability of $1-N^{-\alpha}$ for any $\alpha>0$.  Moreover, we prove the convergence rate between the empirical measure associated to the regularized particle system and the solution of the VPFP equations. The technical novelty of this paper is that our estimates rely on the randomness coming from the initial data  and from the Brownian motions.
\end{abstract}
{\small {\bf Keywords:}
	Coupling method, propagation of chaos, concentration inequality, Wasserstein metric.}

\section{Introduction}
Systems of interacting particles  are quite  common in physics and biosciences, and they are usually formulated according to  first principles (such as Newton's second law).  For instance, particles can represent galaxies in cosmological models \cite{aarseth2003gravitational}, molecules in a fluid \cite{jabin2004identification}, or ions and electrons in plasmas \cite{vlasov1968vibrational}. Such particle systems are also relevant as models for the collective behavior of certain animals like birds, fish, insects, and even micro-organisms (such as cells or bacteria) \cite{bolley2011stochastic,carrillo2010particle,motsch2011new}.  In this paper, we are interested in the classical Newtonian dynamics of $N$ indistinguishable particles interacting through pair  interaction forces and subject to  Brownian noise. Denote by $x_i\in \RR^3$ and $v_i\in\RR^3$ the position and velocity of particle $i$. The evolution of the system is given by the following stochastic differential equations (SDEs),
	\begin{equation}\label{IBM}
	dx_i=v_idt,\quad dv_i=\frac{1}{N-1}\sum\limits_{j\neq i}^Nk(x_i-x_j)dt+\sqrt{2\sigma}dB_i,\quad i=1,\cdots,N,
	\end{equation}
	where $k(x)$ models the pairwise interaction between the individuals, and
	$\{B_i\}_{i=1}^N$ are independent realizations of Brownian motions which count for extrinsic random perturbations such as random collisions against the background. In the presence of friction, model \eqref{IBM} is known as the interacting Ornstein-Uhlenbeck model in the probability or statistical mechanics community. In particular, we refer readers to \cite{olla1991scaling,tremoulet2002hydrodynamic} by Olla, Varadhan  and Tremoulet for the scaling limit of the Ornstein-Uhlenbeck system. In this manuscript, we take the interaction kernel to be  the Coulombian kernel
	\begin{equation}\label{couke}
	k(x) =   a \frac{x}{|x|^{3} }, 
	\end{equation}
	for some real number $a$. The case $a>0$ corresponds, for example,  to the electrostatic (repulsive) interaction of charged particles in a plasma, while the case $a<0$ describes the attraction between massive particles subject to gravitation.  We refer readers to \cite{jeans1915theory,vlasov1968vibrational} for the original modelings.

    Since the number $N$ of particles is large, it is extremely complicated to investigate the microscopic particle system \eqref{IBM} directly. Fortunately, it can be studied through macroscopic descriptions of the system based on the probability density for the particles on phase space.  These macroscopic descriptions are usually expressed as continuous partial differential equations (PDEs). The analysis of 
    the scaling limit of the interacting particle system to the macroscopic continuum model  is usually called the \textit{mean-field limit}.  For the second order particle system \eqref{IBM}, it is expected to be approximated by the following Vlasov-Poisson-Fokker-Planck (VPFP)  equations 
    \begin{equation}\label{vlasovoriginal}
    \left\{
    \begin{aligned}
    &\partial_t f(x,v,t)+v\cdot\nabla_x f(x,v,t)+k\ast\rho(x,t)\cdot\nabla_v f(x,v,t)=\sigma\Delta_v f(x,v,t),\\
    &f(x,v,0)=f_0(x,v),
    \end{aligned}
    \right.
    \end{equation}
    where $f(x,v,t):~(x,v,t) \in\RR^3\times\RR^3\times[0,\infty)\rightarrow \RR^+$ is the probability density function in the phase space $(x,v)$ at time $t$,  and 
    \begin{equation}\label{chargeden}
    \rho(x,t)=\int_{\RR^3} f(x,v,t)dv,
    \end{equation}
    is the charge density introduced by $f(x,v,t)$. We denote by $E(x,t):=k\ast\rho(x,t)$ the Coulombian or gravitational force field.

The intent of this research is to show the mean-field limit of the particle system \eqref{IBM} towards  the Vlasov-Poisson-Fokker-Planck equations \eqref{vlasovoriginal}. In particular, we quantify how close these descriptions are for a given $N$. 
Where $\sigma=0$ (there is no randomness coming from the noise), mean-field limit results for interacting  particle systems with globally Lipschitz forces have been obtained by Braun and Hepp \cite{braun1977vlasov} and Dobrushin \cite{dobrushin1979vlasov}.  Bolley, Ca\~{n}izo and Carrilo \cite{bolley2011stochastic} presented an extension of the classical theory to the particle system with only locally Lipschitz interacting force. Such case concerning kernels  $k\in W_{loc}^{1,\infty}$ are also used in the context of neuroscience \cite{bossy2015clarification,touboul2014propagation}. 
 The last few years have seen great progress in  mean-field limits for singular forces by treating them with an $N$-dependent cut-off. In particular, Hauray and Jabin \cite{jabin2015particles} discussed mildly singular force kernels satisfying $|k(x)|\leq \frac{C}{|x|^\alpha}$ with $\alpha<d-1$ in  dimensions $d\geq3$. For $1<\alpha<d-1$, they performed the mean-field limit for typical initial data, where they chose the  cut-off  to be $N^{-\frac{1}{2d}}$.  For $\alpha< 1$, they prove molecular chaos without cut-off. Unfortunately, their method fails precisely at the Coulomb threshold when $\alpha= d-1$. More recently, Boers and Pickl \cite{boers2016mean} proposed a novel method for deriving mean-field equations 
with interaction forces scaling like $\frac{1}{|x|^{3\lambda-1}}$ $(5/6<\lambda<1)$, and they were able to obtain a cut-off  as small as $ N^{-\frac{1}{d}}$. Furthermore, Lazarovici and Pickl \cite{lazarovici2015mean} extended the method in \cite{boers2016mean} to include the Coulomb singularity and they obtained a microscopic derivation of the Vlasov-Poisson equations with a cut-off of $N^{-\delta}$  $(0<\delta<\frac{1}{d})$.  More recently, the cut-off parameter was reduced to as small as $N^{-\frac{7}{18}}$ in \cite{grass2019microscopic} by using the second order nature of the dynamics.
Where $\sigma>0$, the random particle method for approximating the VPFP system with the Coulombian kernel was studied in \cite{havlak1996numerical}, where the initial data was chose on a mesh and the cut-off parameter can be $N^{-\delta}$ $(0<\delta<\frac{1}{d})$.  Most recently, Carrilo $et.al.$ \cite{carrillo2018propagation}  also investigated the singular VPFP system but with  the i.i.d. initial data, and obtained the propagation of chaos through a cut-off of $N^{-\delta}$  $(0<\delta<\frac{1}{d})$, which was a generalization of \cite{lazarovici2015mean}. We also note that
Jabin and Wang \cite{jabin2016mean} rigorously justified the mean-field limit and propagation of chaos for the Vlasov systems with $L^\infty$ forces and vanishing viscosity ($\sigma_N\rightarrow 0$ as $N\rightarrow \infty$) by using a relative entropy method.  Lastly, for a general overview of this topic we refer readers to \cite{carrillo2010particle,jabin2014review,jabin2017mean,spohn2004dynamics}.

When the interacting kernel $k$ is singular, it poses problems for both theory and  numerical simulations. An easy remedy is to regularize the force with an $N$-dependent cut-off parameter and get $k^N$. The delicate question is how to choose this cut-off. On  the one hand,  the larger the cut-off is,  the smoother $k^N$ will be  and the easier it will be to show the convergence. However, the regularized system is not a good approximation of the actual system. On the other hand, the smaller the cut-off is, the closer $k^N$ is to the real $k$, thus the less information will be lost through the cut-off. Consequently, the necessary balance between accuracy (small cut-off) and regularity (large cut-off) is crucial. The analyses we reviewed above tried to justify that. In this manuscript, we set $\sigma>0$.  Compared with the recent work \cite{carrillo2018propagation}, the main technical innovation of this paper is that we fully use the randomness coming from the initial conditions {\it and} the Brownian motions to significantly improve the cut-off. Note that in \cite{carrillo2018propagation} the size of cut-off can be very close to but larger than $N^{-\frac{1}{d}}$. However we manage to reduce the cut-off size  to be smaller than $N^{-\frac{1}{d}}$ (see Remark \ref{remark}), which is a sort of average minimal distance between $N$ particles in dimension $d$.  This manuscript significantly improves the ideas presented in \cite{garcia2017}. There the potential is split up into a more singular and less singular part.  The less singular part is controlled in the usual manner while the mixing coming from the Brownian motion is used to estimate the more singular part. The technical innovation in the present paper is that the possible number of particles subject to the singular part of the interaction can be bounded due to the fact that the support of the singular part is small using a Law of Large Numbers argument. Again using the Law of Large Numbers based on the randomness coming from the Brownian motion, we show that the leading order of the singular part of the interaction can be replaced by its expectation value.  This step is a key point of the present manuscript. The replacement by the expectation value, i.e. the integration of the  force against the probability density, gives the regularization of the singular part and gives a significant improvement of our estimates. This is carried out in  Lemma \ref{tildaXficedtime}, the proof of which can be found in section \ref{prolem}.
 \cite{garcia2017} and the present paper are, to our knowledge, so far the only results where the mixing from the Brownian motion has been used in the derivation of a mean-field limit for an interacting many-body system.

As a companion of \eqref{IBM}, some also consider the first order stochastic system
\begin{equation}\label{IBM1}
dx_i=\frac{1}{N-1}\sum\limits_{j\neq i}^Nk(x_i-x_j)dt+\sqrt{2\sigma}dB_i,\quad i=1,\cdots,N.
\end{equation}
As before, one can expect that as the number of the particles $N$ goes to infinity we can get the continuous description of the dynamics as the following nonlinear PDE
\begin{equation}
\partial_t f(x,t)+\nabla\cdot (f(k\ast f))=\sigma\Delta_xf\,,
\end{equation}
where $f(x,t)$ is now the spatial density.

The particle system \eqref{IBM1} has many important applications. One of the  best known classical applications is in  fluid dynamics with the Biot-Savart kernel
\begin{equation}
k(x)=\frac{1}{2\pi}(\frac{-x_2}{|x|^2},\frac{x_1}{|x|^2})\,.
\end{equation}
It can be treated by the well-known vortex method introduced by Chorin in 1973 \cite{chorin1973numerical}. The convergence of the vortex method for two and three dimensional inviscid ($\sigma=0$) incompressible fluid flows was first proved by Hald $et\,al.$ \cite{hald,HOH}, Beale and Majda \cite{BM,BM1}. When the effect of viscosity  is involved ($\sigma>0$), the vortex method is replaced by the so called random vortex method by adding a Brownian motion to every vortex. The convergence analysis of the random vortex method for the Navier-Stokes equation was given by \cite{GJ,LD,MCPM,OH} in the 1980s. For more recent results we refer to \cite{duerinckx2016mean,fournier2014propagation,jabin2018quantitative,serfaty2018mean}.
Another well-known application of the system \eqref{IBM1} is to choose the interaction to be the Poisson kernel 
\begin{equation}
k(x)=-C_d\frac{x}{|x|^d},\quad d\geq 2\,,
\end{equation}
where $C_d>0$ and $k$ is set to be attractive.
Now the system \eqref{IBM1} coincides with the particle models to approximate the classical Keller-Segel (KS) equation for chemotaxis \cite{keller1970initiation,PCS}.  We mainly refer  to \cite{garcia2017,fetecau2018propagation,HH2,HH1,huilearning,liu2016propagation,YZ} for the mean-field limit of the KS system. Concerning the size of the cut-off, more specifically, \cite{liu2016propagation} chose the cut-off to be $(\ln N)^{-\frac{1}{d}}$, which was significantly improved in \cite{HH2}, where the cut-off size can be as small as $N^{-\frac{1}{d(d+1)}}\log(N)$.  In \cite{garcia2017,HH1}, the cut-off size was almost optimal, coming fairly close to $N^{-\frac{1}{d}}$.  Many techniques  used in this manuscript are adapted from these papers. For the Poisson-Nernst-Planck equation ($k$ is set to be repulsive),  \cite{liu2016propagation} proved the mean-field limit  without a cut-off.

The rest of the introduction  will be split into three parts: We start with introducing the microscopic random particle system in Section \ref{intro1}. Then we present some results on the existence of  the macroscopic mean-field VPFP equations in Section \ref{intro2}. Lastly, our main theorem will be stated in Section \ref{intro3}, where we prove the closeness of the approximation of solutions to VPFP equations by the microscopic system.
\subsection{Microscopic random particle system}\label{intro1}
We are interested in the time evolution of a system of $N$-interacting Newtonian particles with noise in the $N\to\infty$ limit. The  motion of the system studied in this paper is described by trajectories on phase space, i.e. a time dependent $\Phi_t:\mathbb{R}\to\mathbb{R}^{6N}$. We use the notation
\begin{equation}\label{Phit}
\Phi_t:=\left(X_t,V_t\right):=\left(x^t_1,\ldots, x^t_N,v^t_1,\ldots v^t_N\right),
\end{equation}
where $x^t_j$ stands for the position of the $j^{\text{th}}$ particle at time $t$
and  $v^t_j$ stands for the velocity of the $j^{\text{th}}$ particle at time $t$. 
 The system is a Newtonian system with a noise term coupled to the velocity, whose evolution is governed by a system of SDEs of the type
 \begin{align}\label{eq:motion}
 \left\{
 \begin{aligned}
 &d x_i^t=v_i^tdt,\quad i=1,\cdots,N\,,\\
 &d v_i^t=\frac{1}{N-1}\sum_{j\neq i}^Nk(x_i^t-x_j^t)dt + \sqrt{2\sigma}dB_i^t\;,
\end{aligned}
\right.
 \end{align}
 where $k$ is the Coulomb kernel \eqref{couke} modeling interaction between particles and $B_i^t$ are independent realizations of Brownian motions. 
 
 We regularize the kernel $k$ by a blob function $0\leq \psi(x)\in C^2(\RR^3)$, $\mbox{supp }\psi(x)\subseteq B(0,1)$ and $\int_{\RR^3}\psi(x)dx=1$. Let $\psi_\delta^N=N^{3\delta}\psi(N^{\delta}x)$,
 then the Coulomb kernel with regularization has the form
\begin{equation}\label{eq:force}
k^N(x) =  k \ast \psi_\delta^N.
\end{equation}
Thus one has the regularized microscopic $N$-particle system for $i=1,2\cdots,N$
 \begin{align}\label{eq:regpar}
 \left\{
 \begin{aligned}
& d x_i^t=v_i^t dt,\\
 &d v_i^t=\frac{1}{N-1}\sum_{i\neq j}^Nk^N(x_i^t-x_j^t)dt + \sqrt{2\sigma}dB_i^t\;.
 \end{aligned}
 \right.
 \end{align}
 Here the initial condition $\Phi_0$ of the system is independently, identically  distributed  (i.i.d.) with the common probability density given by $f_0$. 
 And the corresponding  regularized VPFP equations are
 \begin{equation}\label{vlasov}
 \left\{
 \begin{aligned}
 &\partial_t f^N(x,v,t)+v\cdot\nabla_x f^N(x,v,t)+k^N\ast\rho^N(x,t)\cdot\nabla_v f^N(x,v,t)=\sigma\Delta_v f^N(x,v,t),\\
 &\rho^N(x,t)=\int_{\RR^3}f^N(x,v,t)dv,\\
 &f^N(x,v,0)=f_0(x,v).
 \end{aligned}
 \right.
 \end{equation}
 
\subsection{Existence of classical solutions to the Vlasov-Poisson-Fokker-Planck system}\label{intro2}


The existence of weak and classical solutions to VPFP equations \eqref{vlasovoriginal} and related systems has been very well studied. Degond \cite{degond} first showed the existence of a global-in-time smooth solution for the Vlasov-Fokker-Planck equations in one and two space dimensions in the electrostatic
case. Later on, Bouchut \cite{BO,BO1} extended the result to three dimensions when the electric field was coupled through a Poisson equation, and the results were given in both the electrostatic and gravitational case. Also, Victory and O'Dwyer \cite{V-O-B} showed existence of classical solutions for VPFP equations when the spacial dimension is less than or equal to two, and local existence  for all other dimensions.  Then Bouchut in  \cite{BO} proved the global existence of classical solutions for the VPFP system  \eqref{vlasovoriginal} in dimension $d=3$.  His proof relied on the  techniques introduced by Lions and Perthame \cite{lions1991propagation} concerning the existence to the Vlasov-Poisson system in three dimensions.
The long time behavior of the VPFP system was studied by Ono and Strauss \cite{ono2000regular}, Carpio \cite{carpio1998long} and Carrillo $et\,al.$ \cite{soler1997asymptotic}.

The existence results in \cite{V-O-B} and  \cite{BO} are most appropriate for this work. We summarize them in the following theorem, which is also used in \cite[Theorem 2.1]{havlak1996numerical}.
\begin{thm}\label{existence} (Classical solutions of the VPFP equations)
	Let the initial data $0\leq f_0(x,v)$ satisfies the following properties:
	\begin{enumerate}[a)]
		\item $f_0\in W^{1,1}\cap W^{1,\infty}(\RR^6)$;
		\item there exists a $m_0>6$, such that
			\begin{equation}
			(1+|v|^2)^{\frac{m_0}{2}}f_0\in W^{1,\infty}(\RR^6)\,.
			\end{equation}
	\end{enumerate}
Then for any $T>0$, the VPFP equations \eqref{vlasovoriginal} admits a unique  classical solution on $[0,T]$.
\end{thm}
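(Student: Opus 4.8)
Since this statement is collected from \cite{V-O-B} and \cite{BO}, the plan is to sketch the strategy of those references, which adapts the Lions--Perthame moment method \cite{lions1991propagation} for the Vlasov--Poisson system to the kinetic Fokker--Planck setting. The structural starting point is that $\mathcal L:=\partial_t+v\cdot\nabla_x-\sigma\Delta_v$ is Kolmogorov's hypoelliptic operator, whose fundamental solution $G(t,x,v;x',v')$ is an explicit, strictly positive degenerate Gaussian, $C^\infty$ for $t>0$, parabolic of order two in $v$ and --- through the anisotropic scaling $(t,x,v)\mapsto(\lambda^2 t,\lambda^3 x,\lambda v)$ --- smoothing in $x$ as well. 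Consequently the \emph{linear} kinetic equation with a prescribed field $E=E(x,t)$ that is bounded and locally log-Lipschitz,
\[
\partial_t f+v\cdot\nabla_x f+E\cdot\nabla_v f=\sigma\Delta_v f,\qquad f|_{t=0}=f_0,
\]
is well posed: its solution is given by Duhamel's formula against $G$, or equivalently as the law at time $t$ of the diffusion $dX_s=V_s\,ds$, $dV_s=E(X_s,s)\,ds+\sqrt{2\sigma}\,dB_s$ with $(X_0,V_0)\sim f_0$ (note $\operatorname{div}_v E\equiv 0$, so this is genuinely the forward Kolmogorov equation of that diffusion). First I would build the Picard iteration $f^n\mapsto\rho^n=\int_{\RR^3}f^n\,dv\mapsto E^n=k\ast\rho^n\mapsto f^{n+1}$, with $f^{n+1}$ the solution of the linear problem above driven by $E^n$.

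The heart of the argument is a family of a priori bounds, uniform in $n$, on a fixed interval $[0,T]$. One propagates: (i) mass, $\norm{f^n(t)}_{L^1}=\norm{f_0}_{L^1}$, and the maximum principle $\norm{f^n(t)}_{L^\infty}\leq\norm{f_0}_{L^\infty}$; (ii) the weighted bound $(1+|v|^2)^{m_0/2}f^n(t)\in L^\infty(\RR^6)$ together with velocity moments $\int_{\RR^6}(1+|v|)^m f^n\,dx\,dv$ --- this is where the hypothesis $m_0>6$ is used: it makes $\rho^n(x,t)\leq C\norm{(1+|v|^2)^{m_0/2}f^n(t)}_{L^\infty}$ finite and supplies the margin needed to run the Lions--Perthame bootstrap, which also controls $\nabla_x\rho^n$; (iii) therefore $E^n=k\ast\rho^n$ is bounded and (at least) log-Lipschitz in $x$, uniformly in $n$ and $t\in[0,T]$, so the characteristic flows are genuinely well defined. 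Here the Fokker--Planck term is an asset, not an obstruction: the smoothing of $G$ lets one bootstrap the spatial and velocity regularity of $f^n$ for $t>0$ and thereby upgrade $E^n$ to $C^1$, which is precisely what makes the limit a \emph{classical} solution rather than merely a weak one.

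Finally I would show the iteration is Cauchy. Subtracting the equations for $f^{n+1}$ and $f^n$ (or comparing the two driving flows) and inserting the uniform bounds yields an estimate of the schematic form
\[
\norm{f^{n+1}(t)-f^n(t)}_{L^1}\leq C\int_0^t\norm{E^n-E^{n-1}}_{L^\infty}\,ds\leq C\int_0^t\norm{\rho^n-\rho^{n-1}}_{L^1\cap L^\infty}\,ds\leq C\int_0^t\norm{f^n-f^{n-1}}_{L^1}\,ds,
\]
so Gronwall's lemma forces $f^n\to f$; the limit solves \eqref{vlasovoriginal}, and the uniform regularity bounds pass to it. Uniqueness follows from the same Gronwall estimate applied to the difference of two solutions sharing the data $f_0$. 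The main obstacle throughout is the Coulomb singularity of $k=a\,x/|x|^3$ at the Poisson-coupling step --- $\rho\in L^1\cap L^\infty$ alone does not bound $\nabla_x E$, so the Lions--Perthame moment propagation (the origin of the precise threshold $m_0>6$) must be carried out in detail --- and one has to check that this argument is not destroyed by the extra $\sigma\Delta_v$ term; fortunately $\Delta_v$ contributes only controllable lower-order terms to the moment and regularity identities, and if anything helps through its dissipation.
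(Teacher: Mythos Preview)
The paper does not actually prove this theorem: it is quoted without proof as a summary of the existence results in \cite{V-O-B} and \cite{BO} (and also referenced via \cite[Theorem 2.1]{havlak1996numerical}), with the subsequent Remark simply recording what the proof in those references yields about the regularity of $E(\cdot,t)=k\ast\rho(\cdot,t)$. So there is nothing in the paper to compare your argument against beyond the citations themselves.

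That said, your sketch is a faithful outline of the strategy in the cited works: Bouchut's proof in \cite{BO} does proceed by adapting the Lions--Perthame moment-propagation technique \cite{lions1991propagation} to the Fokker--Planck setting, using the explicit Kolmogorov fundamental solution, propagation of velocity moments (which is where the condition $m_0>6$ enters to control $\rho$ in $L^\infty$), and a fixed-point/Gronwall argument for existence and uniqueness. Your emphasis that the $\sigma\Delta_v$ term helps rather than hurts, via hypoelliptic smoothing, is also the correct heuristic. If you were to turn this into a self-contained proof rather than a citation, the one place to be careful is the step from $\rho\in L^1\cap L^\infty$ to control of $\nabla_x E$: as you note, this requires the full Lions--Perthame bootstrap on higher moments, and the details (in particular the interpolation that produces the borderline exponent) are not entirely trivial; but since the paper defers all of this to \cite{V-O-B,BO}, a sketch at your level of detail is more than the paper itself provides.
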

\begin{rmk}
	The proof of the above theorem given in  \cite{V-O-B} and  \cite{BO}  indicates that the map
	\begin{equation}
	t\rightarrow E(\cdot,t):=F\ast\rho(\cdot,t)\,,
	\end{equation}
	is a continuous map from $[0,T]$ to $W^{1,\infty}(\RR^3)$.  This implies that initial smooth data remains smooth for all time intervals $[0,T]$.  So if we assume the initial data satisfies  the following for any $k\geq 1$
	\begin{enumerate}[a)]
		\item $f_0\in W^{k,1}\cap W^{k,\infty}(\RR^6)$;
		\item there exists a $m_0>6$, such that
		\begin{equation}
		(1+|v|^2)^{\frac{m_0}{2}}f_0\in W^{k,1}\cap W^{k,\infty}(\RR^6)\,.
		\end{equation}
	\end{enumerate}
	Then the unique classical solution $f$ maintains the regularity on $[0,T]$ for any $k\geq 1$:
	\begin{equation}
	\max\limits_{0\leq t\leq T}\norm{(1+|v|^2)^{\frac{m_0}{2}}f_t}_{W^{k,1}\cap W^{k,\infty}(\RR^6)}<\infty\,.
	\end{equation}
\end{rmk}

The present paper also needs the uniform-in-time $L^\infty$  bound of the charge density $\rho$:
\begin{equation}\label{Linfrho}
\max\limits_{0\leq t\leq T}\norm{\rho(\cdot,t)}_{W^{1,\infty}(\RR^3)}<\infty\,,
\end{equation}
which was obtained in \cite{pulvirenti2000infty} by means of the stochastic characteristic method under the assumption the $f_0$ is compactly supported in velocity. We also note that  \cite{carrillo2018propagation} provided a proof of the local-in-time $L^\infty$ bound for $\rho$  by employing Feynman-Kac's formula and assuming the initial data has polynomial decay.

In this paper, we assume that the initial data $f_0$ satisfies the following assumption:
\begin{assu}\label{assum}
	The initial data $0\leq f_0(x,v)$ satisfies
	\begin{enumerate}
		\item  $f_0\in W^{1,1}\cap W^{1,\infty}(\RR^6)$;
			\item there exists a $m_0>6$, such that
		\begin{equation}
		(1+|v|^2)^{\frac{m_0}{2}}f_0\in  W^{1,1}\cap W^{1,\infty}(\RR^6);
		\end{equation}
		\item $f_0(x,v)=0$ when $|v|>Q_v$.
	\end{enumerate}
\end{assu}

The above assumption makes sure that we have the regularity needed for this article:
for any $T>0$, 
\begin{align}\label{regularity}
\max\limits_{t\in[0,T]}\norm{\rho(\cdot,t)}_{W^{1,\infty}(\RR^3)}&+\max\limits_{0\leq t\leq T}\norm{(1+|v|^2)^{\frac{m_0}{2}}f_t}_{W^{1,1}\cap W^{1,\infty}(\RR^6)}\leq C_{f_0},
\end{align}
where $C_{f_0}$ depends only on $\|f_0\|_{W^{1,1}\cap W^{1,\infty}(\RR^6)}$, $\norm{(1+|v|^2)^{\frac{m_0}{2}}f_0}_{W^{1,1}\cap W^{1,\infty}(\RR^6)}$ and $Q_v$.   Note that the charge density $\rho$ satisfies
\begin{equation}
\partial_t\rho(x,t)+\int_{\RR^3}v\cdot \nabla_xf(x,v,t)dv=0\,.
\end{equation}
Thus we have
\begin{align}\label{partialtrho}
\max\limits_{t\in[0,T]}\norm{\partial_t\rho(\cdot,t)}_{L^{\infty}(\RR^3)}&\leq \max\limits_{t\in[0,T]}\Norm{\int_{\RR^3}|v|| \nabla_xf(x,v,t)|dv}_{L^{\infty}(\RR^3)}\notag \\
&\leq \max\limits_{t\in[0,T]}\norm{(1+|v|^2)^{\frac{m_0}{2}}f_t}_{W^{1,\infty}(\RR^6)}\int_{\RR^3}\frac{|v|}{(1+|v|^2)^{\frac{m_0}{2}}}dv\leq C_{f_0}\,.
\end{align}
We also note that equivalently one can estimate a bound   for $f^N$ and $\rho^N$ uniformly in $N$.  
\begin{rmk}
	The assumption that $f_0$ is compactly supported in the velocity variable is not required for the existence of the VPFP system. However it is used to get the $L^\infty$  bound of the charge density $\rho$ (see in \cite{pulvirenti2000infty}) and also in the proof of Lemma \ref{lmQ} (see in \eqref{compactv}). 
\end{rmk}

\begin{rmk}
	All our estimates below are also possible in the presence of sufficiently smooth external fields. 
	Due to the fluctuation-dissipation principle it is more natural to add an external, velocity-dependent friction force to the system. 
	\end{rmk}

\subsection{Statement of the main results}\label{intro3}

Our objective is to derive the macroscopic mean-field PDE \eqref{vlasovoriginal} from the regularized microscopic particle system  \eqref{eq:regpar}. We will do this by using probabilistic methods as in \cite{garcia2017,HH1,HH2,lazarovici2015mean}.  More precisely, we shall prove the convergence rate between the solution of VPFP equations \eqref{vlasovoriginal} and the empirical measure associated to the regularized particle system $\Phi_t$ satisfying \eqref{eq:regpar}. We assume that the initial condition $\Phi_0$ of the system is independently, identically distributed (i.i.d.) with the common probability density given by $f_0$. 

Given the solution $f^N$ to the mean-field equation \eqref{vlasov}, we first construct an auxiliary trajectory $\Psi_t$ from  \eqref{vlasov}. Then we  prove the closeness between $\Phi_t$ and $\Psi_t$. 
For the auxiliary trajectory 
\begin{equation}\label{Psi}
\Psi_t:=\left(\overline X_t,\overline V_t\right)=\left(\overline x^t_1,\ldots,\overline x^t_N,\overline v^t_1,\ldots \overline v^t_N\right),
\end{equation}
we shall consider again a Newtonian system with noise, however, this time not subject to the pair interaction but under the influence of the external mean field $k^N\ast \rho^N(x,t)$
\begin{align}\label{eq:mean}
\left\{
\begin{aligned}
&d\overline x_i^t=\overline v_i^t dt,\quad i=1,\cdots,N\,,\\
&d\overline v_i^t=\int_{\RR^3} k^N(\overline x_i^t-x)\rho^N(x,t)dxdt+\sqrt{2\sigma}dB_i^t\,.
\end{aligned}
\right.
\end{align}
Here we let $\Psi_t$ have the same initial condition as $\Phi_t$ (i.i.d. with the common density $f_0$).
Since the particles are just $N$ identical copies of evolution, the independence is conserved. Therefore the $\Psi_t$ are distributed i.i.d.  according to the common probability density $f^N$. We remark  that the VPFP equation \eqref{vlasov} is Kolmogorov's forward equation for any solution of \eqref{eq:mean}, and in particular their probability density $f^N$ solves \eqref{vlasov}. This i.i.d. property will play a crucial role below, where we shall use the concentration inequality (see in Lemma \ref{central}) on some functions depending on $\Psi_t$.

Our main result states that the $N$-particle trajectory $\Phi_t$ starting from $\Phi_0$ (i.i.d. with the common density $f_0$) remains close to the mean-field trajectory $\Psi_t$ with the same initial configuration $\Phi_0=\Psi_0$ during any finite time $[0,T]$. More precisely, we prove that the measure of the set where the maximal distance $\max\limits_{t\in[0,T]}\norm{\Phi_t-\Psi_t}_\infty$ on $[0,T]$ exceeds $N^{-\lambda_2}$ decreases exponentially as the number $N$ of particles grows to infinity. Here the distance $\norm{\Phi_t-\Psi_t}_\infty$ is measured by
\begin{equation}\label{distance}
\norm{\Phi_t-\Psi_t}_\infty:=\sqrt{\log(N)}\|X_t-\overline X_t\|_\infty+ \|V_t-\overline V_t\|_\infty.
\end{equation}

\begin{thm}\label{mainthm}
For any $T>0$, assume that trajectories $\Phi_t=(X_t,V_t)$, $\Psi_t=(\overline X_t,\overline V_t)$ satisfy \eqref{eq:regpar} and \eqref{eq:mean} respectively with the initial data $\Phi_0=\Psi_0$, which is i.i.d. sharing the common density $f_0$ that satisfies Assumption \ref{assum}.  Then for any $\alpha>0$ and $0<\lambda_2<\frac{1}{3}$,  there exists some $0<\lambda_1<\frac{\lambda_2}{3}$ and a $N_0\in \mathbb{N}$ which both depend only on $\alpha$, $T$ and $C_{f_0}$, such that for $N\geq N_0$, the following estimate holds with the cut-off index $\delta\in\left[\frac{1}{3},\min\left\{\frac{\lambda_1+3\lambda_2+1}{6},\frac{1-\lambda_2}{2}\right\}\right)$
$$\mathbb{P}\left(\max\limits_{t\in[0,T]}\norm{\Phi_t-\Psi_t}_\infty \leq N^{-\lambda_2} \right)\geq 1-N^{-\alpha},$$
where $\norm{\Phi_t-\Psi_t}_\infty$ is defined in \eqref{distance}.
\end{thm}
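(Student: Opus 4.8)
\medskip
\noindent\emph{Proof strategy.} The argument is a coupling estimate exploiting that $\Phi_t$ and $\Psi_t$ are driven by the \emph{same} Brownian motions $B_i^t$, so the noise cancels in the difference and $\Phi_t-\Psi_t$ solves a random system of ODEs. Writing $A_t:=\norm{X_t-\overline X_t}_\infty$ and $B_t:=\norm{V_t-\overline V_t}_\infty$, the position equations give $\dot A_t\le B_t$, while subtracting \eqref{eq:mean} from \eqref{eq:regpar} yields, for every $i$,
\begin{equation*}
\frac{d}{dt}\bigl(v_i^t-\overline v_i^t\bigr)=\underbrace{\frac{1}{N-1}\sum_{j\ne i}\bigl[k^N(x_i^t-x_j^t)-k^N(\overline x_i^t-\overline x_j^t)\bigr]}_{\mathcal T_1^{(i)}}+\underbrace{\frac{1}{N-1}\sum_{j\ne i}k^N(\overline x_i^t-\overline x_j^t)-\int_{\RR^3}k^N(\overline x_i^t-x)\rho^N(x,t)\,dx}_{\mathcal T_2^{(i)}}.
\end{equation*}
I would introduce the stopping time $\tau:=\inf\{t\ge0:\norm{\Phi_t-\Psi_t}_\infty>N^{-\lambda_2}\}\wedge T$ and build a ``good event'' $G$ depending only on the i.i.d.\ family $\Psi_\cdot$ and the driving noise, with $\PP(G)\ge 1-N^{-\alpha}$, on which one has the deterministic bounds $\norm{\mathcal T_1}_\infty\lesssim(\log N)\,A_t$ and $\norm{\mathcal T_2}_\infty\lesssim\varepsilon_N$ for all $t<\tau$, with $\varepsilon_N\le N^{-\lambda_2-\eta}$ for some $\eta>0$. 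Feeding these into $\dot A_t\le B_t$, $\dot B_t\lesssim(\log N)A_t+\varepsilon_N$ and applying Grönwall to the \emph{weighted} quantity $D_t:=\sqrt{\log N}\,A_t+B_t=\norm{\Phi_t-\Psi_t}_\infty$ from \eqref{distance} closes the estimate: the $\sqrt{\log N}$ weight is chosen precisely so that this triangular system has Grönwall rate $O(\sqrt{\log N})$ rather than $O(\log N)$, hence $\sup_{t\le\tau}D_t\lesssim\varepsilon_N\,e^{C\sqrt{\log N}\,T}=N^{-\lambda_2-\eta+o(1)}<N^{-\lambda_2}$ for $N\ge N_0$, so $\tau=T$ on $G$ and therefore $\PP(\tau<T)\le\PP(G^c)\le N^{-\alpha}$.

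\noindent\emph{The fluctuation term $\mathcal T_2$.} Conditionally on $\overline x_i^t$, the summands $k^N(\overline x_i^t-\overline x_j^t)$, $j\ne i$, are i.i.d.\ with mean $\int k^N(\overline x_i^t-x)\rho^N(x,t)\,dx$, bounded by $\norm{k^N}_\infty\lesssim N^{2\delta}$ and of variance $\lesssim\norm{\rho^N}_{L^\infty}\norm{k^N}_{L^2}^2\lesssim N^{\delta}$. The concentration inequality (Lemma \ref{central}), a union bound over the $N$ choices of $i$, and a discretization of $[0,T]$ on a grid of mesh $N^{-m}$ — along which the oscillation of $t\mapsto\mathcal T_2^{(i)}(t)$ is controlled by the velocity bound of Lemma \ref{lmQ} together with the regularity bounds \eqref{regularity} and \eqref{partialtrho} — give, off a set of probability $\le N^{-\alpha}$, that $\max_i\sup_{[0,T]}|\mathcal T_2^{(i)}(t)|\lesssim N^{2\delta-1}\,\mathrm{polylog}(N)$. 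The Bernstein range term $\norm{k^N}_\infty/N$ is exactly what forces the restriction $\delta<\tfrac{1-\lambda_2}{2}$, so that this error is of the required size $\le N^{-\lambda_2-\eta}$.

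\noindent\emph{The consistency term $\mathcal T_1$.} Split the force smoothly as $k^N=k^N_r+k^N_s$, where $k^N_s$ is supported in a ball of radius $N^{-\beta}$ with $\beta<\tfrac13$ (chosen comparable to $\lambda_1$). For the regular part, the mean value theorem gives $|\mathcal T_1^{(i),r}|\le\tfrac{2A_t}{N-1}\sum_{j\ne i}\sup_{s\in[0,1]}|\nabla k^N_r(\xi_{ij}^s)|$ with $\xi_{ij}^s$ on the segment joining $x_i^t-x_j^t$ to $\overline x_i^t-\overline x_j^t$; on $\{t<\tau\}$ the two configurations are $N^{-\lambda_2}$-close ($\ll N^{-\beta}$), so $\sup_s|\nabla k^N_r(\xi_{ij}^s)|\lesssim\min\{N^{3\beta},|\overline x_i^t-\overline x_j^t|^{-3}\}$ up to a negligible correction, and the i.i.d.-ness of the $\overline x_j^t$ makes $\tfrac1N\sum_{j}\min\{N^{3\beta},|\overline x_i^t-\overline x_j^t|^{-3}\}$ concentrate (again by Lemma \ref{central}) around $\int\min\{N^{3\beta},|y|^{-3}\}\rho^N\lesssim\log N$ — this is the origin of the logarithm in \eqref{distance}. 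The genuinely delicate piece is the singular part $\mathcal T_1^{(i),s}$: on the possibly origin-crossing segments of close pairs $\nabla k^N$ is only bounded by $N^{3\delta}$, and the naive estimate would contribute a coefficient growing like a positive power of $N$ to the Grönwall rate, which is fatal. Here I would invoke the key Lemma \ref{tildaXficedtime}, which implements the idea of \cite{garcia2017}: a Law of Large Numbers bounds the number of particles lying inside the small support of $k^N_s$, and the smoothing of the particle positions by the Brownian heat kernel lets one replace both $\tfrac1N\sum_j k^N_s(x_i^t-x_j^t)$ and $\tfrac1N\sum_j k^N_s(\overline x_i^t-\overline x_j^t)$ by the mean-field quantity $(k^N_s\ast\rho^N)(\cdot,t)$ up to a small error; since $\norm{\nabla(k^N_s\ast\rho^N)}_\infty\lesssim\norm{k^N_s}_{L^1}\norm{\nabla\rho^N}_\infty\lesssim N^{-\beta}$, this makes $\mathcal T_1^{(i),s}\lesssim N^{-\beta}A_t+(\text{error})$ with the error absorbed into $\varepsilon_N$. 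Balancing the number of close particles, the size $\norm{k^N_s}_\infty\sim N^{2\delta}$, the accuracy $N^{-\lambda_2}$ of the coupling, and the mixing time scale produces the constraint $\delta<\tfrac{\lambda_1+3\lambda_2+1}{6}$ together with $\lambda_1<\tfrac{\lambda_2}{3}$.

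\noindent\emph{Main obstacle.} The hard part is exactly the near-diagonal estimate for $\mathcal T_1^{(i),s}$, i.e.\ Lemma \ref{tildaXficedtime}: making rigorous the replacement of the most singular part of the empirical interaction by its expectation through the Brownian mixing — this is the only point where the randomness of the noise, rather than just that of the initial data, enters — and carrying through the bookkeeping of the several coupled small exponents so that all error terms stay below $N^{-\lambda_2-\eta}$. A secondary technical nuisance is upgrading the various fixed-time concentration bounds (for $\mathcal T_2$ and for the gradient sums in $\mathcal T_1^{(i),r}$) to hold uniformly on $[0,T]$, which is what forces the auxiliary $N^{-m}$ time grid and the a priori velocity control of Lemma \ref{lmQ}.
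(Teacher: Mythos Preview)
Your overall architecture matches the paper's: split the velocity drift into a stability piece $\mathcal T_1$ and a fluctuation piece $\mathcal T_2$, handle $\mathcal T_2$ by concentration on the i.i.d.\ mean-field particles plus a fine time grid (this is Proposition~\ref{propconsis}), split $\mathcal T_1$ into a regular part controlled by a local Lipschitz bound with logarithmic constant (Lemma~\ref{lmlip} and event~$\mathcal B_2$) and a singular near-diagonal part, and close by a Gr\"onwall/stopping-time argument using the $\sqrt{\log N}$ weighting (the paper phrases this as the bootstrap Lemma~\ref{lmprior}, which is equivalent to your stopping time $\tau$). You also correctly flag that the near-diagonal piece is the crux and that Lemma~\ref{tildaXficedtime} is where the Brownian mixing enters.

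There is, however, a genuine gap in how you propose to use Lemma~\ref{tildaXficedtime}. That lemma does \emph{not} compare $K_1^N(X_t)$ with anything: it compares $K_1^N(\widetilde X_t)$ with $K_1^N(\overline X_t)$, where $\widetilde X_t$ is an \emph{auxiliary} trajectory evolving under the mean-field force~\eqref{tilde} but launched at time $t_{n-1}$ from the \emph{interacting} configuration $(X_{t_{n-1}},V_{t_{n-1}})$. This intermediate object is essential because the interacting particles $x_j^t$ are not independent, so no law-of-large-numbers argument applies to them directly; by contrast, the $\widetilde x_j^t$ are (conditionally) independent, and Lemma~\ref{transition} gives quantitative control on their transition densities. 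The paper therefore inserts an extra decomposition
\[
K_1^N(X_t)-K_1^N(\overline X_t)=\bigl(K_1^N(X_t)-K_1^N(\widetilde X_t)\bigr)+\bigl(K_1^N(\widetilde X_t)-K_1^N(\overline X_t)\bigr),
\]
and bounds the first bracket by the Lipschitz estimate of Lemma~\ref{lmmid} together with the short-time bound $\|X_t-\widetilde X_t\|_\infty\lesssim N^{-\lambda_1-\lambda_2}$ on each window of length $\Delta t=N^{-\lambda_1}$; this is what produces the error $N^{6\delta-1-\lambda_1-4\lambda_2}$ and hence the constraint $\delta<\tfrac{\lambda_1+3\lambda_2+1}{6}$. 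Your sketch skips this step and instead describes ``replacing $\tfrac1N\sum_j k^N_s(x_i^t-x_j^t)$ by $(k^N_s\ast\rho^N)$'', which is exactly the move that fails without the auxiliary trajectory. Relatedly, two parameter identifications are off: the cutoff radius for the singular part is $N^{-\lambda_2}$ (not ``comparable to $\lambda_1$''), while $\lambda_1$ is the length of the time windows on which $\widetilde X$ is reinitialized; and the singular contribution in the paper is a pure additive error (the $\mathcal I_2+\mathcal I_3$ terms in Proposition~\ref{propstab}), not an $N^{-\beta}A_t$ term as you write.
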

\begin{rmk}\label{remark}
	In particular, for any $\frac{1}{63}\leq\varepsilon<\frac{1}{36}$, choosing $\lambda_2=\frac{1}{3}-\varepsilon$ and $\lambda_1=\frac{1}{9}-\varepsilon$, we have a convergence rate $N^{-\frac{1}{3}+\varepsilon}$ with a cut-off size of $N^{-\delta}$ $(\frac{1}{3}\leq\delta<\frac{19}{54}-\frac{2\varepsilon}{3})$. In other words, the cut-off parameter $\delta$ can be chosen very close to $\frac{19}{54}$ and in particular larger than $\frac{1}{3}$, which is a significant improvement over  previous results in the literature.
\end{rmk}

 \textbf{Strategy of the proof.}  The strategy is to obtain a Gronwall-type inequality for $\max\limits_{t\in[0,T]}\norm{\Phi_t-\Psi_t}_\infty $. Notice that
\begin{equation*}
\frac{d (V_t-\overline V_t)}{d t}=K^N(X_t)-\overline K^N(\overline X_t),
\end{equation*}
where $K^N(X_t)$  and $\overline K^N(\overline X_t)$ are defined as
\begin{equation}
(K^N(X_t))_i:=\frac{1}{N-1}\sum_{j\neq i}k^N(x_i^t-x_j^t);\quad (\overline K^N(\overline X_t))_i:=\int_{\RR^3} k^N(\overline x_i^t-x)\rho^N(x,t)dx.
\end{equation}

One can compute
	\begin{align} \label{strategy}
	&\frac{d \norm{\Phi_t-\Psi_t}_\infty}{dt} \leq\sqrt{\log(N)}\nel V_t-\overline V_t\ner_\infty+\nel K^N(X_t)-\overline K^N(\overline X_t)\ner_\infty  \notag \\
	\leq &\sqrt{\log(N)}\nel V_t-\overline V_t\ner_\infty+\nel K^N(X_t)- K^N(\overline X_t)\ner_\infty +\nel K^N(\overline X_t)-\overline K^N(\overline X_t)\ner_\infty \,.  
	\end{align}
	If the force $k^N$ is Lipschitz continuous with a Lipschitz constant independent of $N$, the desired convergence follows easily \cite{braun1977vlasov,dobrushin1979vlasov}. However the force considered here becomes singular as $N\rightarrow \infty$, hence it does not satisfy a uniform Lipschitz bound.
	
	The first term in \eqref{strategy} is already a sufficient bound in view of Gronwall's Lemma.
	
	$\bullet$ By the Law of Large Numbers, carried out in detail for our purpose here in Proposition \ref{propconsis}, we show for any $T>0$
	\begin{equation}\label{strategy1}
	\max\limits_{t\in[0,T]}\nel K^N(\overline X_t)-\overline K^N(\overline X_t)\ner_\infty \preceq CN^{2\delta-1}\log(N)\,,
	\end{equation}
	where for convenience we  abused the notation $a\preceq b$ to denote $a\leq b$ except for an event with probability approaching zero.
	
	This direct error estimate can be seen as a consistency of the two evolutions in high probability.
	
	$\bullet$ In Proposition \ref{propstab}, we show that the propagation of errors, coming from the second term in \eqref{strategy}, is stable. This  stability is important to be able to close the Gronwall argument. We show that  for any $T>0$
	\begin{equation}\label{strategy2}
	\| K^N(X_t)-K^N(\overline X_t) \|_\infty \preceq C\log(N)\nel X_t-\overline X_t\ner_\infty+C\log^2(N)N^{-\lambda_3}\,, \mbox{ for all }t\in[0,T]\,,
	\end{equation}
	holds under the condition that
	\begin{equation}\label{stabcond}
	\max\limits_{t\in[0,T]}\norm{\Phi_t-\Psi_t}_\infty\preceq N^{-\lambda_2}\,.
	\end{equation}
	Here it is crucial to ensure the constant $\lambda_3$ satisfies $2\delta-1\leq -\lambda_3 <-\lambda_2<0$. The function of this additional condition will be clear later (see Remark \ref{ATexplain}).
	
	To get this improvement of the cutoff parameter compared to previous results in the literature, we make use of the mixing caused by the Brownian motion. Therefore we  split potential $K^N:=K_1^N+K_2^N$, where $K_2^N$ is chosen to have a wider cut-off of order $N^{-\lambda_2}>N^{-\delta}$. The less singular part $K_2^N$ is controlled in the usual manner \cite{boers2016mean,HH1,HH2,lazarovici2015mean} (see in estimate \eqref{I1'}).
	\begin{equation}
	\| K_2^N(X_t)-K_2^N(\overline X_t) \|_\infty \preceq C\log(N)\nel X_t-\overline X_t\ner_\infty.
	\end{equation}  
	
	Thus we are left with the force $K_1^N$. We shall first estimate the number of particles that will be present in the support of $K_1^N$. Since the latter is small, this number will always be very small compared to $N$.
	
	Under the condition \eqref{stabcond} we can not track the particles of the Newtonian time evolution with an accuracy larger than $N^{-\lambda_2}$. Thus -- without using the Brownian motion -- we have to assume the worst case scenario, which is all particles giving the maximal possible solution to the force, i.e. sitting close to the edge of the cutoff region, and all forces summing up, i.e. all particles sitting on top of each other.
	
	But the Brownian motion in our system will lead to mixing. For a short time interval the effect of mixing will be much larger than the effect of the correlations coming from the pair interaction, and we can make use of the independence of the Brownian motions. This mixing, which happens on a larger spacial scale than the range of the potential, causes the particles to be distributed  roughly  equally over the support of the interaction resulting in a cancellation of  the leading order of  $K_1^N$.

	It follows for the more singular part $K_1^N$ that
		\begin{equation}
	\| K_1^N(X_t)-K_1^N(\overline X_t) \|_\infty\preceq C\log^2(N)N^{-\lambda_3}\,,
	\end{equation}

  This is mainly carried out  in Section \ref{prolem} (the proof of Lemma \ref{tildaXficedtime}).
	
	$\bullet$ Combining consistency \eqref{strategy1} and stability \eqref{strategy2}, we conclude that for any $0< T_1\leq T$
	\begin{equation}
	\frac{d \norm{\Phi_t-\Psi_t}_\infty}{dt} \preceq C\log(N)\nel X_t-\overline X_t\ner_\infty+C\log^2(N)N^{-\lambda_3}\,, \mbox{ for all }t\in(0,T_1]\,,
	\end{equation}
 holds	provided that
		\begin{equation}
	\max\limits_{t\in[0,T_1]}\norm{\Phi_t-\Psi_t}_\infty\preceq N^{-\lambda_2}\,,
	\end{equation}
	where $-\lambda_3<-\lambda_2<0$.
	This implies a generalized Gronwall's inequality (see Lemma \ref{lmprior}), which leads to
	\begin{equation}
    \max\limits_{t\in[0,T]}\norm{\Phi_t-\Psi_t}_\infty\preceq N^{-\lambda_2}\,.
	\end{equation}
	Hence it completes our proof.

To quantify the convergence of probability measures, we give a brief introduction on the topology of the $p$-Wasserstein space.  In the context of kinetic equations, it was first introduced by Dobrushin \cite{dobrushin1979vlasov}.
Consider the following probability space with finite $p$-th moment:
\begin{equation}
\mathcal{P}_p(\mathbb{R}^{d})=\big\{\mu|~\mu \mbox{ is a probability measure on } \mathbb{R}^{d} \mbox{ and } \int_{\mathbb{R}^{d}}|x|^p d\mu(x)<+\infty\big\}.
\end{equation}
We denote the Monge-Kantorovich-Wasserstein distance in $\mathcal{P}_p(\mathbb{R}^{d})$ as follows
\begin{equation}
W_p^p(\mu,\nu)=\inf_{\pi\in\Lambda(\mu,~\nu)}\Big\{\int_{\mathbb{R}^{d}\times\mathbb{R}^{d}}|x-y|^pd\pi(x,y)\Big\}=\inf_{X\sim \mu,Y\sim \nu}\Big\{\EE[|X-Y|^p]\Big\},
\end{equation}
where $\Lambda(\mu,~\nu)$ is the set of joint probability measures on ${\mathbb{R}^{d}}\times{\mathbb{R}^{d}}$ with marginals $\mu$ and $\nu$ respectively and $(X,Y)$ are all possible couples of random variables with $\mu$ and $\nu$ as respective laws.  For notational simplicity, the notation for a probability measure and its probability density is often abused. So if $\mu,\nu$ have densities $\rho_1,\rho_2$ respectively, we also denote the distance as $W_p^p(\rho_1,\rho_2)$.  For further details, we refer the reader to the book of Villani \cite{villani2008optimal}.

Following the same argument as \cite[Corollary 4.3]{lazarovici2015mean}, Theorem \ref{mainthm} implies molecular chaos in the following sense:
\begin{cor}
	For any $T>0$, let $F_0^N:=\otimes^N f_0$ and $F_t^N$ be the $N$-particle distribution evolving with the microscopic flow \eqref{eq:regpar} starting from $F_0^N$. Then the $k$-particle marginal
\begin{equation*}
^{(k)}F_t^N(z_1,\cdots,z_k):=\int F_t^N(Z)dz_{k+1}\cdots dz_{N}
\end{equation*}
converges weakly to $\otimes^kf_t$ as $N\rightarrow\infty$ for all $k\in N$, where $f_t$ is the unique solution of the VPFP equations \eqref{vlasovoriginal} with $f_t|_{t=0}= f_0$. More precisely, under the assumptions of Theorem \ref{mainthm}, for any $\alpha>0$, there exists some constants $C>0$ and $N_0>0$ depending only on $\alpha$, $T$ and $C_{f_0}$, such that for $N\geq N_0$, the following estimate holds
\begin{equation*}
W_1\left(^{(k)}F_t^N,\otimes^kf_t\right)\leq k\exp\left(TC\sqrt{\log (N)}\right)N^{-\lambda_2},\quad\forall\,~ 0\leq t\leq T,
\end{equation*}
where $\lambda_2$ is used in Theorem \ref{mainthm}.
\end{cor}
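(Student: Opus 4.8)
The plan is to deduce the corollary from Theorem~\ref{mainthm} by the coupling argument of \cite[Corollary~4.3]{lazarovici2015mean}, complemented by a soft stability estimate passing from the regularized density $f^N$ to the solution $f$ of \eqref{vlasovoriginal}. The key structural fact is that $(\Phi_t,\Psi_t)$, built from the common initial data $\Phi_0=\Psi_0$ (i.i.d.\ with density $f_0$) and the common Brownian motions $\{B_i^t\}$, is a coupling of $F_t^N$ (the law of $\Phi_t$) and $\otimes^N f^N_t$ (the law of $\Psi_t$, by the i.i.d.\ property recorded after \eqref{eq:mean}); restricting to the first $k$ coordinates it is a coupling of ${}^{(k)}F_t^N$ and $\otimes^k f^N_t$. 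Using the definition \eqref{distance} together with $\sqrt{\log N}\ge 1$ one gets $|x_i^t-\overline x_i^t|+|v_i^t-\overline v_i^t|\le\norm{\Phi_t-\Psi_t}_\infty$ for each $i$, hence
\[
W_1\big({}^{(k)}F_t^N,\otimes^k f^N_t\big)\ \le\ \EE\Big[\sum_{i=1}^k\big(|x_i^t-\overline x_i^t|+|v_i^t-\overline v_i^t|\big)\Big]\ \le\ k\,\EE\big[\norm{\Phi_t-\Psi_t}_\infty\big],
\]
so everything reduces to bounding $\EE[\norm{\Phi_t-\Psi_t}_\infty]$ by a constant times $N^{-\lambda_2}$.

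For this I would split the expectation over the good set $\mathcal G:=\{\max_{s\in[0,T]}\norm{\Phi_s-\Psi_s}_\infty\le N^{-\lambda_2}\}$, which by Theorem~\ref{mainthm} satisfies $\PP(\mathcal G)\ge 1-N^{-\alpha}$, and its complement. On $\mathcal G$ the integrand is at most $N^{-\lambda_2}$. On $\mathcal G^c$ the crucial point is that in this coupling the Brownian increments cancel: since $\Phi_0=\Psi_0$ one has $v_i^t-\overline v_i^t=\int_0^t\big((K^N(X_s))_i-(\overline K^N(\overline X_s))_i\big)ds$ and $x_i^t-\overline x_i^t=\int_0^t(v_i^s-\overline v_i^s)ds$; since $\norm{k^N}_\infty\le CN^{2\delta}$, both $(K^N(X_s))_i$ and $(\overline K^N(\overline X_s))_i$ are bounded by $CN^{2\delta}$ deterministically, whence $\norm{\Phi_t-\Psi_t}_\infty\le CN^{3\delta}$ for $N$ large, with no moment assumption on the trajectories. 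Therefore $\EE[\norm{\Phi_t-\Psi_t}_\infty\,\mathbf{1}_{\mathcal G^c}]\le CN^{3\delta-\alpha}\le N^{-\lambda_2}$ as soon as $\alpha>3\delta+\lambda_2$, which is admissible since $\alpha>0$ is arbitrary in Theorem~\ref{mainthm} (it only enlarges $N_0$). This yields $\EE[\norm{\Phi_t-\Psi_t}_\infty]\le 2N^{-\lambda_2}$ and hence $W_1\big({}^{(k)}F_t^N,\otimes^k f^N_t\big)\le 2kN^{-\lambda_2}$.

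It remains to replace $f^N$ by $f$. From the uniform-in-$N$ bound $\max_{t\le T}\norm{\rho^N(\cdot,t)}_{W^{1,\infty}(\RR^3)}\le C_{f_0}$ of \eqref{regularity} and the estimate $\norm{(k^N-k)\ast\rho}_\infty=\norm{k\ast(\psi_\delta^N\ast\rho-\rho)}_\infty\le CN^{-\delta}$ (split $k$ into its locally integrable part near the origin and its bounded, decaying part away from it, and use $\norm{\psi_\delta^N\ast\rho-\rho}_{L^1\cap L^\infty}\le CN^{-\delta}\norm{\nabla\rho}_{L^1\cap L^\infty}$), a standard Dobrushin-type stability estimate \cite{dobrushin1979vlasov} for the characteristic flows of \eqref{vlasov} and \eqref{vlasovoriginal}, both issued from $f_0$, gives $W_1(f^N_t,f_t)\le e^{CT}N^{-\delta}$; since $\delta\ge\frac13>\lambda_2$ this is $\le e^{CT}N^{-\lambda_2}$. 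As $W_1(\mu^{\otimes k},\nu^{\otimes k})\le k\,W_1(\mu,\nu)$ for the $\ell^1$-type product metric (use the $k$-fold product of an optimal coupling), we obtain $W_1(\otimes^k f^N_t,\otimes^k f_t)\le ke^{CT}N^{-\lambda_2}$. Combining with the previous step and the triangle inequality, $W_1\big({}^{(k)}F_t^N,\otimes^k f_t\big)\le k(2+e^{CT})N^{-\lambda_2}\le k\exp\big(TC\sqrt{\log N}\big)N^{-\lambda_2}$ for $N\ge N_0$ after enlarging $C$; weak convergence of ${}^{(k)}F_t^N$ to $\otimes^k f_t$ follows since the right-hand side tends to $0$.

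The whole content sits in Theorem~\ref{mainthm}; the only non-generic point here is the treatment of the $\mathcal G^c$ contribution, where one must not bound $\norm{\Phi_t-\Psi_t}_\infty$ by moments of the individual trajectories but rather by the deterministic $O(N^{3\delta})$ bound above, which is exactly what makes the arbitrariness of $\alpha$ in Theorem~\ref{mainthm} useful. The step $f^N\to f$ is routine given the uniform bounds already recorded in \eqref{regularity}.
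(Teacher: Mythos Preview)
Your coupling argument in the first two paragraphs is correct and matches the route the paper takes via \cite[Corollary~4.3]{lazarovici2015mean}: restricting the coupling $(\Phi_t,\Psi_t)$ to the first $k$ coordinates gives $W_1\big({}^{(k)}F_t^N,\otimes^k f_t^N\big)\le k\,\EE[\|\Phi_t-\Psi_t\|_\infty]$, and the good-set/bad-set split together with the deterministic $O(N^{3\delta})$ bound on $\mathcal G^c$ (exploiting that the Brownian increments cancel in $V_t-\overline V_t$) lets the free choice of $\alpha$ in Theorem~\ref{mainthm} absorb the bad-set contribution.

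The gap is in the passage $f^N\to f$. A Dobrushin stability estimate \cite{dobrushin1979vlasov} requires the force to be Lipschitz as a functional of the measure, i.e.\ a bound of the type $\|k\ast(\rho_1-\rho_2)\|_\infty\le C\,W_1(f_1,f_2)$, and this fails for the Coulomb kernel. Concretely, in the decomposition $k^N\ast\rho^N-k\ast\rho=(k^N-k)\ast\rho^N + k\ast(\rho^N-\rho)$ your argument handles only the first term; the self-consistent second term cannot be closed by Dobrushin's argument, and the uniform $W^{1,\infty}$ bounds on $\rho,\rho^N$ in \eqref{regularity} do not by themselves give smallness of $\rho^N-\rho$ in any norm. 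The correct tool is Loeper's $W_2$-stability for Vlasov--Poisson \cite{loeper2006uniqueness}, adapted to VPFP in \cite[Lemma~3.2]{carrillo2018propagation} and quoted later in the paper as \eqref{term1}, which yields
\[
\max_{t\in[0,T]}W_p(f_t,f_t^N)\le N^{-\delta}\exp\big(C\sqrt{\log N}\big).
\]
This $\sqrt{\log N}$ in the exponent is precisely why the corollary's bound reads $k\exp(TC\sqrt{\log N})N^{-\lambda_2}$ rather than the $k\,e^{CT}N^{-\lambda_2}$ your argument would deliver; your claimed intermediate bound is too strong. With this correction in place, the triangle-inequality assembly and the elementary inequality $W_1(\mu^{\otimes k},\nu^{\otimes k})\le k\,W_1(\mu,\nu)$ finish the proof exactly as you wrote.
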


Another result from Theorem \ref{mainthm} is the derivation of the macroscopic mean-field VPFP equations \eqref{vlasovoriginal} from the microscopic random particle system \eqref{eq:regpar}. We define the empirical measure associated to the microscopic $N$-particle systems \eqref{eq:regpar} and \eqref{eq:mean} respectively as 
\begin{equation}\label{empirical}
\mu_\Phi(t):=\frac{1}{N}\sum_{i=1}^{N}\delta(x-x_i^t)\delta(v-v_i^t),\quad \mu_\Psi(t):=\frac{1}{N}\sum_{i=1}^{N}\delta(x-\overline x_i^t)\delta(v-\overline v_i^t).
\end{equation}

The following  theorem shows that under additional moment control assumptions on $f_0$, the empirical measure $\mu_\Phi(t)$ converges to the solution of VPFP equations \eqref{vlasovoriginal} in $W_p$ distance with high probability.
\begin{thm}\label{cor} Under the same assumption as in Theorem \ref{mainthm}, let $f_t$ be the unique solution to the VPFP equations \eqref{vlasovoriginal} with the initial data satisfying Assumption \ref{assum} and $\mu_\Phi(t)$ be the empirical measure defined in \eqref{empirical} with $\Phi_t$ being the particle flow solving \eqref{eq:regpar}. Let $p\in[1,\infty)$ and assume that there exists $m>2p$ such that $\iint_{\RR^6}|x|^mf_0(x,v)dxdv<+\infty$. Then for any $T>0$ and $\kappa<\min\{\frac{1}{6},\frac{1}{2p},\delta\}$, there exists a constant $C_1$ depending only on  $T$ and $C_{f_0}$ and constants $C_2$, $C_3$ depending only on $m$, $p$, $\kappa$,  such that for all $N\geq e^{\left(\frac{C_1}{1-3\lambda_2}\right)^2}$ it holds that
\begin{align}
\PP\bigg(\max\limits_{t\in[0,T]}W_p(f_t,\mu_\Phi(t))&\leq N^{-\kappa+1-3\lambda_2}+N^{-\lambda_2}\bigg)\notag\\
&\geq 1-C_2\left(e^{-C_3N^{1-\max\{6,2p\}\kappa}}+N^{1-\frac{m}{2p}}\right).
\end{align}
 where $\delta$ and $\lambda_2$ are used in Theorem \ref{mainthm}.
\end{thm}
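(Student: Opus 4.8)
The plan is to combine a triangle inequality, the trajectory estimate of Theorem \ref{mainthm}, and a quantitative law of large numbers in Wasserstein distance applied at the initial time. Write $z_i^t=(x_i^t,v_i^t)$ and $\overline z_i^t=(\overline x_i^t,\overline v_i^t)$, and let $f_t^N$ solve the regularized VPFP system \eqref{vlasov}. For each $t\in[0,T]$ I would split
\[
W_p(f_t,\mu_\Phi(t))\;\le\; W_p(f_t,f_t^N)\;+\;W_p(f_t^N,\mu_\Psi(t))\;+\;W_p(\mu_\Psi(t),\mu_\Phi(t)).
\]
The last term is a pure coupling error, the first a deterministic regularization error, and the middle one is the probabilistic core, since by construction $\Psi_t$ is i.i.d.\ with common density $f_t^N$.

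\textbf{The two easy terms.} Pairing the empirical measures atom by atom, $W_p^p(\mu_\Psi(t),\mu_\Phi(t))\le\frac1N\sum_{i}|z_i^t-\overline z_i^t|^p\le(\max_i|z_i^t-\overline z_i^t|)^p$, and since $|z_i^t-\overline z_i^t|\le|x_i^t-\overline x_i^t|+|v_i^t-\overline v_i^t|\le\norm{\Phi_t-\Psi_t}_\infty$ (using $\sqrt{\log N}\ge1$), Theorem \ref{mainthm} with the choice $\alpha:=\max\{\tfrac m{2p}-1,1\}$ would give $\max_{t\in[0,T]}W_p(\mu_\Psi(t),\mu_\Phi(t))\le N^{-\lambda_2}$ off an event of probability $\le N^{-\alpha}\le N^{1-\frac m{2p}}$. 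For the regularization term I would use $\norm{k-k^N}_{L^1(\RR^3)}\lesssim N^{-\delta}$, the uniform-in-$N$ bounds on $f^N,\rho^N$ recalled after \eqref{regularity}, and the stability of the VPFP flow with respect to the cut-off as in \cite{lazarovici2015mean,HH1}, which give $W_p(f_t,f_t^N)\lesssim N^{-\delta}$ up to logarithmic factors; since $\kappa<\delta$ this is $\le N^{-\kappa}$ for $N$ large and is absorbed into the bound for the middle term.

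\textbf{The law of large numbers, propagated from $t=0$.} At $t=0$, $\mu_\Psi(0)=\mu_\Phi(0)$ is the empirical measure of $N$ i.i.d.\ samples of $f_0=f_0^N$. Under $\iint_{\RR^6}|x|^mf_0\,dx\,dv<\infty$ with $m>2p$ and $\kappa<\min\{\tfrac16,\tfrac1{2p}\}$ (so that $1-\max\{6,2p\}\kappa>0$), a quantitative law of large numbers in $W_p$ on $\RR^6$ in the spirit of Fournier--Guillin yields
\[
\PP\big(W_p(f_0,\mu_\Phi(0))>N^{-\kappa}\big)\;\le\;C_2\big(e^{-C_3N^{1-\max\{6,2p\}\kappa}}+N^{1-\frac m{2p}}\big),
\]
with $C_2,C_3$ depending only on $m,p,\kappa$. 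To carry this forward to time $t$, I would exploit that $f_t^N$ is the time-$t$ law of the mean-field SDE \eqref{eq:mean} and $\mu_\Psi(t)$ the empirical measure of $N$ solutions of \eqref{eq:mean}: coupling the $f_0$-ensemble to $\mu_\Psi(0)$ through an optimal transport map and flowing both along \eqref{eq:mean} with matched Brownian motions, a Gronwall estimate for $\tfrac d{dt}|\delta x_t|\le|\delta v_t|$ and $\tfrac d{dt}|\delta v_t|\le L|\delta x_t|$, run on the weighted quantity $\sqrt L\,|\delta x_t|+|\delta v_t|$ with $L:=\max_{t\le T}\norm{\nabla_x(k^N\ast\rho^N(\cdot,t))}_{L^\infty}\lesssim\log N$ (the same $\sqrt{\log N}$-weighting as in \eqref{distance}), gives $W_p(f_t^N,\mu_\Psi(t))\le C\sqrt{\log N}\,e^{C_1t\sqrt{\log N}}\,W_p(f_0,\mu_\Phi(0))$ with $C_1=C_1(T,C_{f_0})$. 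On the good event this is $\le C\sqrt{\log N}\,e^{C_1T\sqrt{\log N}}N^{-\kappa}$, and after enlarging $C_1$ (still depending only on $T$ and $C_{f_0}$) to absorb the prefactor, the condition $N\ge e^{(C_1/(1-3\lambda_2))^2}$ forces $e^{C_1T\sqrt{\log N}}\le N^{1-3\lambda_2}$, hence $W_p(f_t^N,\mu_\Psi(t))\le N^{-\kappa+1-3\lambda_2}$ for all $t\in[0,T]$.

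\textbf{Conclusion and the hard part.} On the intersection of the two exceptional events, of total probability $\le N^{-\alpha}+C_2(e^{-C_3N^{1-\max\{6,2p\}\kappa}}+N^{1-\frac m{2p}})\le C_2'(e^{-C_3N^{1-\max\{6,2p\}\kappa}}+N^{1-\frac m{2p}})$ by the choice of $\alpha$, the three bounds combine to $\max_{t\in[0,T]}W_p(f_t,\mu_\Phi(t))\le N^{-\kappa}+N^{-\kappa+1-3\lambda_2}+N^{-\lambda_2}\le N^{-\kappa+1-3\lambda_2}+N^{-\lambda_2}$ since $1-3\lambda_2>0$, which is the claim. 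The main obstacle is the third step: it requires a Wasserstein law of large numbers with the sharp dependence on the moment exponent $m$ and on the phase-space dimension through $\max\{6,2p\}$ --- this is exactly what pins down the admissible range of $\kappa$ --- and the synchronous-coupling propagation must absorb the near-singular Lipschitz constant $\norm{\nabla_x(k^N\ast\rho^N)}_{L^\infty}$, which is only $O(\log N)$ and has to be handled with the $\sqrt{\log N}$-weighting of \eqref{distance} so that the resulting Gronwall factor is $e^{O(\sqrt{\log N})}\le N^{1-3\lambda_2}$ rather than a genuine power of $N$; by comparison, the deterministic cut-off stability of the second step, though technical in $W_p$ for $p>1$, follows the established route of \cite{lazarovici2015mean,HH1}.
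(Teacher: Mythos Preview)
Your three-term split $W_p(f_t,\mu_\Phi(t))\le W_p(f_t,f_t^N)+W_p(f_t^N,\mu_\Psi(t))+W_p(\mu_\Psi(t),\mu_\Phi(t))$ and the treatment of the outer two terms match the paper's argument. One minor point: for the cut-off stability $W_p(f_t,f_t^N)$ the paper explicitly remarks that the compact-support method of \cite{lazarovici2015mean} you cite does \emph{not} carry over here because diffusion destroys compact support, and invokes instead a Loeper-type $W_p$ estimate from \cite[Lemma~3.2]{carrillo2018propagation}.

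The substantive gap is in the middle term. You propose to bound $W_p(f_0,\mu_\Psi(0))$ by Fournier--Guillin and then \emph{propagate} to time $t$ by coupling $f_0$ to $\mu_\Psi(0)$ optimally and flowing both along the mean-field SDE~\eqref{eq:mean} with ``matched Brownian motions''. That synchronous-coupling propagation is correct for deterministic characteristic flows (Vlasov--Poisson) but fails in the stochastic setting: $f_t^N$ is the law of~\eqref{eq:mean} averaged over a \emph{fresh} Brownian motion, while each atom of $\mu_\Psi(t)$ carries its own realized path $B_i$. If you assign to a sample $z\sim f_0$ the Brownian motion $B_{T(z)}$ of its transported atom, the law of the flowed point \emph{conditional on $B_1,\dots,B_N$} is not $f_t^N$, so you have not constructed an admissible coupling of $f_t^N$ and the (realization-dependent) measure $\mu_\Psi(t)$; indeed the inequality $W_p(f_t^N,\mu_\Psi(t))\le L_t\,W_p(f_0,\mu_\Psi(0))$ is false in general (take $f_0$ concentrated near a point, so the right-hand side is arbitrarily small while the left-hand side is of order $\sqrt t\,N^{-1/6}$). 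What actually works---and what the paper's citation of \cite[Corollary~9.4]{lazarovici2015mean} together with Fournier--Guillin amounts to once adapted to the stochastic setting---is to use that the $\overline z_i^t$ are i.i.d.\ with common law $f_t^N$ at \emph{every} fixed $t$ (stated just before Theorem~\ref{mainthm}), apply the Wasserstein law of large numbers directly at each time with uniform-in-$t$ moment bounds on $f_t^N$, and recover $\max_{t\in[0,T]}$ and the $e^{C\sqrt{\log N}}$ prefactor via time-discretization and short-time continuity rather than by a single Gronwall from $t=0$.
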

This theorem provides a derivation of the  VPFP equations from an interacting $N$-particle system, bridging the gap between the microscopic descriptions in terms of agent based models and macroscopic or hydrodynamic descriptions for the particle probability density. 

\section{Preliminaries}
In this section we collect the technical lemmas that are used in the proofs of the main theorems. Throughout this manuscript, generic constants will be denoted generically by $C$ (independent of $N$), even if they are different from line to line. We use $\norm{\cdot}_p$ for the $L^p$ ($1\leq p\leq \infty$) norm of a function. Moreover if $v=(v_1,\cdots,v_N)$ is a vector, then $\norm{v}_\infty:=\max\limits_{i=1,\cdots,N}|v_i|$.
\subsection{Local Lipschitz bound}
First let us recall some estimates of the regularized kernel $k^N$ defined in \eqref{eq:force}:
\begin{lem}\label{lmkenerl}(Regularity of $k^N$)
	\begin{enumerate}[(i)]
		\item $k^N(0)=0$, $k^N(x)=k(x)$, for any $|x|\geq N^{-\delta}$ and  
		$|k^N(x)|\leq |k(x)|$, \mbox{for  
			any } $x\in\mathbb{R}^3$;
		\item $|\partial^\beta k^N(x)|\leq CN^{(2+|\beta|)\delta},\mbox{for  
			any } x\in\mathbb{R}^3$;
		\item $\|k^N\|_{2}\leq CN^{\frac{\delta}{2}}.$
	\end{enumerate}
\end{lem}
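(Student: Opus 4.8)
Throughout I would write $k(x)=ax/|x|^3=-a\nabla_x(1/|x|)$ and take the blob $\psi$, hence $\psi_\delta^N$, to be radially symmetric, as is standard for such mollifiers (and is in fact needed for the identities in (i)).

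\emph{Part (i).} The crux is to obtain an explicit formula for $k^N$. Since $k=-a\nabla\Phi$ with $\Phi(x)=1/|x|$, we have $k^N=k*\psi_\delta^N=-a\nabla U^N$ where $U^N:=\Phi*\psi_\delta^N$. As $\psi_\delta^N$ is radial and $-\Delta\Phi=4\pi\delta_0$ on $\RR^3$, the potential $U^N$ is radial, $U^N(x)=u(|x|)$, and solves $-\Delta U^N=4\pi\psi_\delta^N$; integrating the radial Laplacian (Newton's shell theorem) gives $u'(r)=-r^{-2}m(r)$ with $m(r):=\int_{B(0,r)}\psi_\delta^N(y)\,dy\in[0,1]$ the mass of $\psi_\delta^N$ inside radius $r$. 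Hence
\[
k^N(x)=a\,\frac{m(|x|)}{|x|^{3}}\,x .
\]
All three assertions of (i) then follow at once: for $|x|\ge N^{-\delta}$ the support of $\psi_\delta^N$ lies in $B(0,|x|)$, so $m(|x|)=1$ and $k^N(x)=k(x)$; for every $x$, $|k^N(x)|=|a|\,m(|x|)/|x|^{2}\le|a|/|x|^{2}=|k(x)|$ since $m\le1$; and since $\psi_\delta^N$ is bounded, $m(r)\le\|\psi_\delta^N\|_\infty\frac43\pi r^{3}$, so $|k^N(x)|\le C N^{3\delta}|x|\to0$ as $x\to0$, which with the continuity of $k^N$ (a convolution of an $L^1_{\mathrm{loc}}$ function with a $C^2_c$ function) yields $k^N(0)=0$.

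\emph{Part (ii).} I would split according to whether $|x|\ge N^{-\delta}$ or $|x|<N^{-\delta}$. In the first case $k^N=k$ by (i), and $k$ is homogeneous of degree $-2$, so $\partial^\beta k$ is homogeneous of degree $-2-|\beta|$ and $|\partial^\beta k^N(x)|=|\partial^\beta k(x)|\le C_\beta|x|^{-2-|\beta|}\le C_\beta N^{(2+|\beta|)\delta}$. In the second case I move the derivatives onto the mollifier, $\partial^\beta k^N=k*\partial^\beta\psi_\delta^N$ with $\partial^\beta\psi_\delta^N(y)=N^{(3+|\beta|)\delta}(\partial^\beta\psi)(N^\delta y)$ supported in $B(0,N^{-\delta})$; since the integrand then forces $|x-y|\le 2N^{-\delta}$ and $\int_{|z|\le R}|z|^{-2}\,dz=4\pi R$ in $\RR^3$, one gets
\[
|\partial^\beta k^N(x)|\le\|\partial^\beta\psi_\delta^N\|_\infty\int_{|z|\le 2N^{-\delta}}|k(z)|\,dz\le C\,N^{(3+|\beta|)\delta}\cdot N^{-\delta}=C\,N^{(2+|\beta|)\delta},
\]
where $\|\partial^\beta\psi\|_\infty<\infty$ because $\psi\in C^2$ (so $|\beta|\le 2$).

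\emph{Part (iii).} Splitting $\|k^N\|_2^2$ the same way: on $\{|x|\ge N^{-\delta}\}$, $k^N=k$ and $\int_{|x|\ge N^{-\delta}}|a|^2|x|^{-4}\,dx=4\pi|a|^2N^{\delta}$; on $\{|x|<N^{-\delta}\}$ the $L^\infty$ bound from (ii) with $\beta=0$ gives $\int_{|x|<N^{-\delta}}|k^N|^2\,dx\le (CN^{2\delta})^2\cdot\frac43\pi N^{-3\delta}=C N^{\delta}$. Adding the two, $\|k^N\|_2^2\le CN^{\delta}$, that is, $\|k^N\|_2\le CN^{\delta/2}$. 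This lemma is not deep; among the steps the one requiring the most care is Part (i), namely the exact identity $k^N(x)=a\,m(|x|)x/|x|^3$, which hinges on the radial symmetry of the mollifier and the harmonicity of $1/|x|$ away from the origin (equivalently Newton's theorem), together with the routine justification of differentiation under the convolution and of the $C^2$-regularity of $k^N$ (from $k\in L^1_{\mathrm{loc}}(\RR^3)$ and $\psi\in C^2_c$). Once that formula is available, (ii) and (iii) reduce to the elementary estimate $\int_{|z|\le R}|z|^{-2}\,dz=4\pi R$.
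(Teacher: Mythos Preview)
Your argument is correct. The paper does not actually prove this lemma: it defers (i) and (ii) to references (Raviart; Beale--Majda) and attributes (iii) to Young's inequality. Your route via the explicit Newton--shell formula $k^N(x)=a\,m(|x|)\,x/|x|^{3}$ is the standard one underlying those references and makes (i) immediate; you are also right that radial symmetry of the mollifier, though not stated explicitly in the paper's hypotheses on $\psi$, is what makes the exact identities $k^N(0)=0$ and $k^N(x)=k(x)$ on $|x|\ge N^{-\delta}$ hold. For (iii) your direct split using (i) and the case $\beta=0$ of (ii) is more transparent than a bare appeal to Young, which in any case needs some decomposition since $k\notin L^p(\RR^3)$ for any $p$. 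The limitation you flag---that your argument for (ii) on $\{|x|<N^{-\delta}\}$ requires $|\beta|\le 2$ because $\psi\in C^2$---is harmless here, as the paper never uses higher derivatives.
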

The estimate $(i)$ has been proved in \cite[Lemma 2.1]{RY} and $(ii)$ follows from \cite[Lemma 5.1]{BM}. As for $(iii)$, it is a direct result of Young's inequality.

Next we define a cut-off function $\ell^N$, which will provide the local Lipschitz bound for $k^N$.
\begin{defn}\label{defLN}
	Let
	\begin{equation}\label{lN}
	\ell^N(x)=\left\{
	\begin{aligned}
	&\frac{6^3}{|x|^{3}},  && \text{ if } |x|\geq 6N^{-\delta},\\
	&N^{3\delta},  && \text{ else },
	\end{aligned}
	\right.
	\end{equation}
	and $L^N: \RR^{3N}\rightarrow \RR^N$ be defined by  
	$(L^N(X_t))_i:=\frac{1}{N-1}\sum\limits_{i\neq j}\ell^N(x_i^t-x_j^t)$.  
	Furthermore, we define $\overline L^N(\overline X_t)$ by $( \overline  
	L^N(\overline X_t))_i:=\int_{\RR^3} \ell^N(\overline  
	x_i^t-x)\rho^N(x,t)dx$.
\end{defn}

We summarize our first observation of $k^N$ and $\ell^N$ in the following lemma:
\begin{lem}\label{lmmid}
	There is a constant $C>0$ independent of $N$ such that for all $x,y\in\mathbb{R}^3$ with
	$|x-y|\leq N^{-\lambda_2}\gg N^{-\delta}$ $(\lambda_2<\delta)$ the following holds:
	\begin{align*}
	\frac{\left|\nabla k^N(x)\right|}{\ell^N(y)}\leq CN^{3(\delta-\lambda_2)},
	\end{align*}
	where $k^N$ is the regularization of the Coulomb kernel \eqref{couke} and $\ell^N$ satisfies Definition \ref{defLN}.
\end{lem}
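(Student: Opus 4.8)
The plan is to split into the two regimes that appear in the definition of $\ell^N$ and simply compare the size of $|\nabla k^N(x)|$ against $\ell^N(y)$ in each. First I would fix $x,y\in\RR^3$ with $|x-y|\leq N^{-\lambda_2}$, and recall from Lemma \ref{lmkenerl}(ii) that $|\nabla k^N(x)|\leq CN^{3\delta}$ holds uniformly in $x$; this is the only bound we ever need on the numerator. So the whole content is a lower bound on $\ell^N(y)$ of order $N^{3\delta}\cdot N^{-3(\delta-\lambda_2)}=N^{3\lambda_2}$, i.e. we must show $\ell^N(y)\geq cN^{3\lambda_2}$ for some $c>0$.

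The first case is $|y|<6N^{-\delta}$, where by definition $\ell^N(y)=N^{3\delta}$. Since $\delta>\lambda_2$ we have $N^{3\delta}\geq N^{3\lambda_2}$, and combining with $|\nabla k^N(x)|\leq CN^{3\delta}$ gives $|\nabla k^N(x)|/\ell^N(y)\leq C\leq CN^{3(\delta-\lambda_2)}$, which is even stronger than claimed. The second, and only mildly delicate, case is $|y|\geq 6N^{-\delta}$, where $\ell^N(y)=6^3/|y|^3$. Here I would use the triangle inequality and the hypothesis $|x-y|\leq N^{-\lambda_2}$: since also implicitly $|x|$ enters only through $|\nabla k^N(x)|$ which is already bounded by $N^{3\delta}$, the real point is to bound $|y|$ from above in terms of $N^{-\lambda_2}$. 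Indeed, for the estimate to be useful one restricts attention to $|y|\lesssim N^{-\lambda_2}$ (for larger $|y|$, $k^N=k$ is smooth there and $|\nabla k^N(x)|\leq C|x|^{-3}\leq C N^{3\lambda_2}$ by the same triangle inequality, so the ratio is $O(1)$ as well); for $6N^{-\delta}\leq|y|\leq CN^{-\lambda_2}$ we get $\ell^N(y)=6^3|y|^{-3}\geq 6^3 C^{-3} N^{3\lambda_2}$, and dividing $|\nabla k^N(x)|\leq CN^{3\delta}$ by this yields exactly $CN^{3(\delta-\lambda_2)}$.

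The main (and only) obstacle is bookkeeping the intermediate range of $|y|$ correctly: one has to make sure that whenever $|\nabla k^N(x)|$ is genuinely of size $N^{3\delta}$ — which, by Lemma \ref{lmkenerl}(i), can only happen when $|x|\leq N^{-\delta}$, hence $|y|\leq N^{-\delta}+N^{-\lambda_2}\leq 2N^{-\lambda_2}$ — the denominator $\ell^N(y)$ is no smaller than order $N^{3\lambda_2}$, which is exactly what the case analysis above delivers since then $|y|\leq 2N^{-\lambda_2}$ forces $\ell^N(y)\geq \min\{N^{3\delta}, 6^3(2N^{-\lambda_2})^{-3}\}\geq cN^{3\lambda_2}$. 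Putting the two cases together and absorbing all numerical factors into $C$ gives the claimed bound $|\nabla k^N(x)|/\ell^N(y)\leq CN^{3(\delta-\lambda_2)}$ for all admissible $x,y$, completing the proof.
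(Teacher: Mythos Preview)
Your approach is essentially the paper's: split according to the size of $|y|$ and use the uniform bound $|\nabla k^N|\leq CN^{3\delta}$ in the small-$|y|$ regime and the pointwise bound $|\nabla k^N(x)|\leq C|x|^{-3}$ in the large-$|y|$ regime. Your Cases 1 and 2a are fine and match the paper's first case.

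There is, however, a genuine slip in your sub-case 2b (large $|y|$). You write
\[
|\nabla k^N(x)|\leq C|x|^{-3}\leq C N^{3\lambda_2}\quad\text{``so the ratio is $O(1)$''},
\]
but the bound $|\nabla k^N(x)|\leq CN^{3\lambda_2}$ by itself does \emph{not} give a bounded ratio, since for large $|y|$ the denominator $\ell^N(y)=6^3|y|^{-3}$ can be arbitrarily small compared to $N^{3\lambda_2}$. What you must do instead (and what the paper does) is compare $|x|^{-3}$ to $|y|^{-3}$ directly: from $|x-y|\leq N^{-\lambda_2}$ and $|y|\geq 2N^{-\lambda_2}$ you get $|x|\geq |y|-N^{-\lambda_2}\geq |y|/2$, hence
\[
\frac{|\nabla k^N(x)|}{\ell^N(y)}\leq C\,\frac{|x|^{-3}}{|y|^{-3}}\leq C\left(\frac{|y|}{|y|/2}\right)^{3}=8C.
\]
Once you replace your intermediate step with this direct comparison, the argument is complete and coincides with the paper's proof.
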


\begin{proof}
	Let us first consider the case $|y|<2 N^ {-\lambda_2}$. It follows  
	from the bound from Lemma \ref{lmkenerl} and the decrease of $\ell^ N$ that
	\begin{align}\label{case1}
	\frac{\left|\nabla k^N(x)\right|}{\ell^N(y)}\leq  \frac{N^{3\delta}}{\ell^  
		N(2 N^ {-\lambda_2})}=CN^{3(\delta-\lambda_2)},
	\end{align}
	where we used  $2N^{-\lambda_2}>6N^ {-\delta}$, thus $\ell^ N( 2N^  
	{-\lambda_2})=27N^{3\lambda_2}$.
	
	Next we consider the case $|y|\geq 2 N^ {-\lambda_2}$. It follows that  
	$|x|\geq N^ {-\lambda_2}$ and thus by  Lemma \ref{lmkenerl} (i)
	
	\begin{align}\label{case2}
	\frac{\left|\nabla k^N(x)\right|}{\ell^N(y)}\leq \frac{C|x|^{-3}}{|y|^{-3}}\leq C \frac{(|y|-N^  
		{-\lambda_2})^{-3} }{|y|^{-3}}\leq C,
	\end{align}
	where in the last step we used $|x|\geq (|y|-N^  
	{-\lambda_2})\geq\frac{|y|}{2}$ for $|y|\geq 2N^ {-\lambda_2}$.
	Collecting \eqref{case1} and \eqref{case2} finishes the proof.
\end{proof}

Recall the notations
\begin{equation}\label{FN}
(K^N(X_t))_i:=\frac{1}{N-1}\sum_{j\neq i}k^N(x_i^t-x_j^t),\quad (K^N(\overline X_t))_i:=\frac{1}{N-1}\sum_{j\neq i}k^N(\overline x_i^t-\overline x_j^t),
\end{equation}
and we have the local Lipschitz continuity of $K^N$:
\begin{lem}\label{lmlip}
	If $\|X_t-\overline X_t\|_\infty\leq 2N^{-\delta}$, then it holds that
	\begin{equation}\label{lmlipeq}
	\nel K^N(X_t)-K^N(\overline X_t)\ner_\infty\leq C\|L^N(\overline X_t)\|_\infty\|X_t-\overline X_t\|_\infty,
	\end{equation}
	for some $C>0$ independent of $N$.
\end{lem}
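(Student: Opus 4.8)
The plan is to estimate $\nel K^N(X_t)-K^N(\overline X_t)\ner_\infty$ componentwise, reducing to a telescoping sum over the pair interactions and then applying a mean-value-type argument together with the local Lipschitz bound of Lemma \ref{lmmid}. Fix an index $i$. Then
\begin{equation*}
\left(K^N(X_t)-K^N(\overline X_t)\right)_i=\frac{1}{N-1}\sum_{j\neq i}\left(k^N(x_i^t-x_j^t)-k^N(\overline x_i^t-\overline x_j^t)\right).
\end{equation*}
For each $j$, I would interpolate along the segment joining $x_i^t-x_j^t$ to $\overline x_i^t-\overline x_j^t$, writing the difference as an integral of $\nabla k^N$ evaluated at intermediate points $z_s = s(x_i^t-x_j^t)+(1-s)(\overline x_i^t-\overline x_j^t)$, and bounding it by $\sup_{s\in[0,1]}|\nabla k^N(z_s)|\cdot\left(|x_i^t-\overline x_i^t|+|x_j^t-\overline x_j^t|\right)\leq 2\|X_t-\overline X_t\|_\infty\sup_s|\nabla k^N(z_s)|$.

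The key step is to control $\sup_s|\nabla k^N(z_s)|$ by $C\,\ell^N(\overline x_i^t-\overline x_j^t)$. Since $\|X_t-\overline X_t\|_\infty\leq 2N^{-\delta}$, the point $z_s$ differs from $y:=\overline x_i^t-\overline x_j^t$ by at most $4N^{-\delta}$. Strictly speaking Lemma \ref{lmmid} is stated for perturbations of size $N^{-\lambda_2}\gg N^{-\delta}$, so here I would invoke the (easier) version of the same argument with $\lambda_2$ replaced by $\delta$ up to constants: if $|y|<6N^{-\delta}$ then $|z_s|$ may be small, but $|\nabla k^N(z_s)|\leq CN^{3\delta}=C\ell^N(y)$ by Lemma \ref{lmkenerl}(ii) and the definition of $\ell^N$; if $|y|\geq 6N^{-\delta}$ then $|z_s|\geq |y|-4N^{-\delta}\geq |y|/3$, so by Lemma \ref{lmkenerl}(i) one has $|\nabla k^N(z_s)|\leq C|z_s|^{-3}\leq C|y|^{-3}=C\ell^N(y)/6^3$. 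Either way $\sup_s|\nabla k^N(z_s)|\leq C\ell^N(\overline x_i^t-\overline x_j^t)$ with $C$ independent of $N$.

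Combining, for each $i$,
\begin{equation*}
\left|\left(K^N(X_t)-K^N(\overline X_t)\right)_i\right|\leq \frac{C}{N-1}\sum_{j\neq i}\ell^N(\overline x_i^t-\overline x_j^t)\cdot\|X_t-\overline X_t\|_\infty = C\,(L^N(\overline X_t))_i\,\|X_t-\overline X_t\|_\infty,
\end{equation*}
and taking the maximum over $i$ gives \eqref{lmlipeq}. The main obstacle is the bookkeeping in the previous paragraph — reconciling the perturbation scale $N^{-\delta}$ appearing in the hypothesis of this lemma with the scale $N^{-\lambda_2}$ in Lemma \ref{lmmid}, and checking that the factor $6$ in the definition of $\ell^N$ (together with the constant in $|z_s|\geq|y|/3$) leaves enough room so that the two cases glue with an $N$-independent constant. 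Everything else is a routine telescoping/mean-value estimate.
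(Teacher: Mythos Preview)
Your proposal is correct and follows essentially the same route as the paper: the paper also establishes the pointwise estimate $|k^N(x+\xi)-k^N(x)|\leq C\ell^N(x)|\xi|$ for $|\xi|<4N^{-\delta}$ by the same two-case mean-value argument (using $\|\nabla k^N\|_\infty\leq CN^{3\delta}$ when $|x|<6N^{-\delta}$ and $|x+s\xi|\geq |x|/3$ when $|x|\geq 6N^{-\delta}$), and then sums over $j$. Your aside about Lemma~\ref{lmmid} is unnecessary---that lemma is not used here, and the direct case split at scale $N^{-\delta}$ that you carry out is exactly what is needed.
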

\begin{proof}
	For any $\xi\in\RR^3$ with $|\xi|<4N^{-\delta}$, we claim that 
	\begin{equation}\label{lemcla}
	|k^N(x+\xi)-k^N(x)|\leq C\ell^N(x)|\xi|,
	\end{equation}
	where $\ell^N(x)$ is defined in \eqref{lN}.  Indeed, for $|x|<6N^{-\delta}$, estimate \eqref{lemcla} holds due to the fact that
	$\norm{\nabla k^N}_\infty\leq  N^{3\delta}$. For $|x|\geq 6N^{-\delta}$, there exists $s\in[0,1]$ such that
	 $$	|k^N(x+\xi)-k^N(x)|\leq |\nabla k^N(x+s\xi)||\xi|,$$
	where 
	$$|\nabla k^N(x+s\xi)|\leq C|x+s\xi|^{-3}.$$
	The right hand side of the above expression takes its largest value when $s=1$ and
	$$|x+s\xi|^{-3}\leq |x(1-\frac{|\xi|}{|x|})|^{-3}.$$
	Since $|\xi|<4N^{-\delta}$ and $|x|\geq 6N^{-\delta}$, it follows that $\frac{|\xi|}{|x|}<\frac{2}{3}$. Therefore, we get
	$$|k^N(x+\xi)-k^N(x)|\leq C\left(\frac{3}{|x|}\right)^2|\xi|\leq C\frac{|\xi|}{|x|^3}.$$
	
	Applying claim \eqref{lemcla} one has
	\begin{align}
	|(K^N(X_t))_i-(K^N(\overline X_t))_i|&\leq \frac{1}{N-1}\sum\limits_{j\neq i}^{N}| k^N(x_i^t-x_j^t)-k^N(\overline x_i^t-\overline x_j^t)|\notag\\
	&\leq \frac{1}{N-1}\sum\limits_{j\neq i}^{N}C\ell^N(\overline x_i^t-\overline x_j^t)|x_i^t-x_j^t-\overline x_i^t+\overline x_j^t|\notag \\
	&\leq C( L^N(\overline X_t))_i\|X_t-\overline X_t\|_\infty\leq C\|L^N(\overline X_t)\|_\infty\|X_t-\overline X_t\|_\infty,
	\end{align}
	which leads to \eqref{lmlipeq}.
\end{proof}

The following observations of $k^N$ and $\ell^N$ turn out to be very helpful in the sequel:
\begin{lem}\label{converse}
	Let $\ell^N(x)$ be defined in Definition \ref{defLN} and $\rho \in W^{1,1}\cap W^{1,\infty} (\RR^3)$.  Then there exists a constant $C>0$ independent of $N$ such that
	\begin{equation}\label{con1}
	\norm{\ell^N\ast \rho}_\infty\leq C\log(N)(\norm{\rho}_1+\norm{\rho}_\infty),\quad\norm{(\ell^N)^2\ast \rho}_\infty\leq CN^{(3\delta)}(\norm{\rho}_1+\norm{\rho}_\infty);
	\end{equation}
		and
		\begin{equation}\label{con2}
	\norm{ k^N\ast \rho}_\infty\leq C(\norm{\rho}_1+\norm{\rho}_\infty),\quad	\norm{\nabla k^N\ast \rho}_\infty\leq C(\norm{\nabla\rho}_1+\norm{\nabla\rho}_\infty).
		\end{equation}
\end{lem}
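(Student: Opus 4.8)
The plan is to estimate the convolutions directly by splitting the integrals into the near-field region $|x-y|\leq 6N^{-\delta}$ and the far-field region $|x-y|> 6N^{-\delta}$, matching the two-piece definition of $\ell^N$ in \eqref{lN}. For the first bound in \eqref{con1}, write
\begin{equation*}
(\ell^N\ast\rho)(x)=\int_{|x-y|\leq 6N^{-\delta}}N^{3\delta}\rho(y)\,dy+\int_{|x-y|>6N^{-\delta}}\frac{6^3}{|x-y|^3}\rho(y)\,dy.
\end{equation*}
The near-field piece is bounded by $N^{3\delta}\norm{\rho}_\infty$ times the volume of a ball of radius $6N^{-\delta}$, which is $O(1)\norm{\rho}_\infty$. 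For the far-field piece I would further split at radius $1$ (say): on $6N^{-\delta}<|x-y|\leq 1$ use $\rho\leq\norm{\rho}_\infty$ and compute $\int_{6N^{-\delta}}^{1}r^{-3}r^2\,dr=\log(1/(6N^{-\delta}))\leq C\log N$, giving $C\log(N)\norm{\rho}_\infty$; on $|x-y|>1$ bound $|x-y|^{-3}\leq 1$ and integrate against $\norm{\rho}_1$. Summing gives $\norm{\ell^N\ast\rho}_\infty\leq C\log(N)(\norm{\rho}_1+\norm{\rho}_\infty)$. For $(\ell^N)^2\ast\rho$ the same decomposition applies: the near-field contributes $N^{6\delta}$ times a volume $O(N^{-3\delta})$, i.e.\ $O(N^{3\delta})\norm{\rho}_\infty$; the far-field integrand is $|x-y|^{-6}$, and $\int_{6N^{-\delta}}^{\infty}r^{-6}r^2\,dr=\int_{6N^{-\delta}}^\infty r^{-4}\,dr=C N^{3\delta}$ (after splitting off $|x-y|>1$ to use $\norm{\rho}_1$), so again $\norm{(\ell^N)^2\ast\rho}_\infty\leq CN^{3\delta}(\norm{\rho}_1+\norm{\rho}_\infty)$.

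For \eqref{con2}, I would use Lemma \ref{lmkenerl}(i), namely $|k^N(x)|\leq|k(x)|=|a||x|^{-2}$ pointwise. Splitting $\int_{|x-y|\leq1}+\int_{|x-y|>1}$, the singular part gives $\int_{0}^{1}r^{-2}r^2\,dr=1$ times $\norm{\rho}_\infty$ (this integral is finite in $\RR^3$, no log needed since the singularity is $|x|^{-2}$), and the tail gives a bound by $\norm{\rho}_1$; hence $\norm{k^N\ast\rho}_\infty\leq C(\norm{\rho}_1+\norm{\rho}_\infty)$. For $\nabla k^N\ast\rho$ I would move the derivative onto $\rho$, writing $\nabla(k^N\ast\rho)=k^N\ast\nabla\rho$, and then apply the bound just proved with $\rho$ replaced by $\nabla\rho$ componentwise, yielding $\norm{\nabla k^N\ast\rho}_\infty\leq C(\norm{\nabla\rho}_1+\norm{\nabla\rho}_\infty)$. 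This is where the hypothesis $\rho\in W^{1,1}\cap W^{1,\infty}$ is used, and it is also why the statement is phrased with derivatives on $\rho$ rather than on $k^N$ (a direct bound using $|\nabla k^N|\leq CN^{3\delta}$ from Lemma \ref{lmkenerl}(ii) would be $N$-dependent and useless here).

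The only mild subtlety—hardly an obstacle—is bookkeeping the region splits so that both $\norm{\rho}_1$ and $\norm{\rho}_\infty$ genuinely appear and the logarithm is correctly tracked (it comes solely from the borderline decay $|x|^{-3}$ of $\ell^N$ in dimension $3$, which is scale-invariant and thus produces $\log N$ from the ratio of the two length scales $1$ and $N^{-\delta}$). Everything else is an elementary radial integral. I would present the four estimates in the order: $\ell^N\ast\rho$, then $(\ell^N)^2\ast\rho$, then $k^N\ast\rho$, then $\nabla k^N\ast\rho$, reusing the near/far decomposition each time and the identity $\nabla(k^N\ast\rho)=k^N\ast\nabla\rho$ for the last one.
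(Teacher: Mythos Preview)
Your proposal is correct and follows essentially the same approach as the paper: a three-region split at radii $6N^{-\delta}$ and $1$, using $\norm{\rho}_\infty$ on the inner regions and $\norm{\rho}_1$ on the tail, with the $\log N$ arising from the scale-invariant decay $|x|^{-3}$ of $\ell^N$. The paper in fact writes out only the first estimate and declares the others analogous, whereas you have sketched all four, including the key observation $\nabla(k^N\ast\rho)=k^N\ast\nabla\rho$ for the last one.
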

\begin{proof}
	We only prove one of the estimates above, since all the estimates can be obtained through the same procedure.
	One can estimate
		\begin{align}
		&\norm{\ell^N\ast\rho}_\infty=\Norm{\int_{\RR^3}\ell^N(x-y)\rho(y)dy}_\infty\notag \\
		\leq &\Norm{\int_{|x-y|<6N^{-\delta}}\ell^N(x-y)\rho(y)dy}_\infty+\Norm{\int_{6N^{-\delta}
			\leq|x-y|\leq 1}\ell^N(x-y)\rho(y)dy}_\infty\notag\\
	&+\Norm{\int_{1
			\leq|x-y|}\ell^N(x-y)\rho(y)dy}_\infty.
		\end{align}		
We estimate the first term
\begin{align}\label{ter1}
\Norm{\int_{|x-y|<6N^{-\delta}}\ell^N(x-y)\rho(y)dy}_\infty\leq \norm{\rho}_\infty\norm{\ell^N}_\infty|B(6N^{-\delta})|\leq \frac{4\pi}{3}(6N^{-\delta})^3N^{3\delta}\norm{\rho}_\infty\leq C\norm{\rho}_\infty,
\end{align}
where $B(r)$ denotes the ball with radius $r$ in $\RR^3$.
The second term is bounded by
\begin{align}\label{ter2}
\Norm{\int_{6N^{-\delta}\leq|x-y|\leq 1}\ell^N(x-y)\rho(y)dy}_\infty\leq \norm{\rho}_\infty\int_{6N^{-\delta}\leq|y|\leq 1}\frac{C}{|y|^3}dy\leq C\log(N)\norm{\rho}_\infty.
\end{align}
It is easy to compute the last term 
\begin{equation}\label{ter3}
\Norm{\int_{1
		\leq|x-y|}\ell^N(x-y)\rho(y)dy}_\infty\leq C\norm{\rho}_1.
\end{equation}
Collecting estimates \eqref{ter1}, \eqref{ter2} and \eqref{ter3}, one has
\begin{equation}
\norm{\ell^N\ast\rho}_\infty\leq C\norm{\rho}_{\infty}+C\log(N)\norm{\rho}_\infty+C\norm{\rho}_1\leq C\log(N)(\norm{\rho}_\infty+\norm{\rho}_1).
\end{equation}
\end{proof}

\subsection{Law of Large Numbers}
Also, we need the following concentration inequality to  provide us the probability bounds of random variables:
\begin{lem}\label{central} 
	Let $Z_1,\cdots,Z_N$ be $i.i.d.$ random variables with $\mathbb{E}[Z_i]=0,$ $\mathbb{E}[Z_i^2]\leq g(N)$
	and $|Z_i|\leq C\sqrt{Ng(N)}$. Then for any $\alpha>0$, the sample mean $\bar{Z}=\frac{1}{N}\sum_{i=1}^{N}Z_i$ satisfies
	\begin{equation}
	\PP\left(|\bar{Z}|\geq\frac{C_\alpha \sqrt{g(N)}\log(N)}{\sqrt{N}}\right)\leq N^{-\alpha},
	\end{equation}
	where $C_\alpha$ depends only on $C$ and $\alpha$.
\end{lem}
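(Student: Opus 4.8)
The plan is to apply a standard Bernstein-type tail bound and optimize the resulting exponent. First I would recall the classical Bernstein inequality: for i.i.d. mean-zero random variables $Z_1,\dots,Z_N$ with $|Z_i|\leq M$ almost surely and $\mathbb{E}[Z_i^2]\leq v$, one has for every $t>0$
\begin{equation*}
\PP\left(\Big|\tfrac{1}{N}\sum_{i=1}^N Z_i\Big|\geq t\right)\leq 2\exp\left(-\frac{Nt^2}{2v+\tfrac{2}{3}Mt}\right).
\end{equation*}
In our setting $v=g(N)$ and $M=C\sqrt{Ng(N)}$. Substituting $t=\dfrac{C_\alpha\sqrt{g(N)}\log(N)}{\sqrt{N}}$, I would compute the two terms in the denominator of the exponent: the ``variance term'' is $2v=2g(N)$, while the ``large-deviation term'' is $\tfrac{2}{3}Mt=\tfrac{2}{3}C\sqrt{Ng(N)}\cdot\dfrac{C_\alpha\sqrt{g(N)}\log(N)}{\sqrt{N}}=\tfrac{2}{3}CC_\alpha\, g(N)\log(N)$. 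Hence the denominator is at most $C'(1+C_\alpha\log N)\,g(N)$ for a constant $C'$ depending only on $C$, and the numerator is $Nt^2=C_\alpha^2\, g(N)\log^2(N)$.

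Therefore the probability is bounded by
\begin{equation*}
2\exp\left(-\frac{C_\alpha^2\log^2(N)}{C'(1+C_\alpha\log N)}\right)\leq 2\exp\left(-\,c\,C_\alpha\log(N)\right)=2N^{-cC_\alpha}
\end{equation*}
for $N$ large, where $c>0$ depends only on $C$. Choosing $C_\alpha$ large enough (depending only on $C$ and $\alpha$) so that $cC_\alpha\geq \alpha+1$, the right-hand side is at most $2N^{-\alpha-1}\leq N^{-\alpha}$ once $N\geq 2$. If one prefers to avoid quoting Bernstein's inequality directly, the same bound follows from the exponential Markov inequality: estimate $\mathbb{E}[e^{s Z_i}]\leq \exp\!\big(\tfrac{s^2 v}{2(1-sM/3)}\big)$ for $0<s<3/M$ using the bound $\mathbb{E}[e^{sZ_i}]\le 1+\sum_{k\ge 2}\tfrac{s^k\mathbb{E}[|Z_i|^k]}{k!}$ together with $\mathbb{E}[|Z_i|^k]\le M^{k-2}v$, then take $s$ proportional to $\log(N)/\big(\sqrt{Ng(N)}\big)$ and optimize.

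The argument is essentially routine; the only mild subtlety — the ``main obstacle'', such as it is — is bookkeeping the two regimes in the Bernstein denominator to confirm that the stated scaling $t\sim \sqrt{g(N)/N}\,\log N$ lands in the sub-Gaussian regime (so that the $\log^2 N$ in the numerator survives as $\log N$ in the exponent after dividing by the $\log N$ coming from $Mt$), and checking that all constants depend only on $C$ and $\alpha$ and not on $g$ or $N$. This is exactly why the hypothesis couples the almost-sure bound $|Z_i|\le C\sqrt{Ng(N)}$ to the variance proxy $g(N)$: it guarantees $Mt\lesssim g(N)\log N$, keeping the large-deviation correction of the same order as (a logarithmic factor times) the variance term.
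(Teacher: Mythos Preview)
Your proof is correct and matches the paper's approach: the paper does not spell out the argument but simply cites \cite[Lemma~1]{GJ} and remarks that it is ``a direct result of Taylor's expansion and Markov's inequality,'' which is exactly the exponential-Markov derivation underlying Bernstein's inequality that you invoke (and sketch explicitly in your final paragraph). Your bookkeeping of the variance and large-deviation terms is accurate, and the observation that the hypothesis $|Z_i|\le C\sqrt{Ng(N)}$ is precisely what keeps $Mt$ comparable to $v\log N$ is the right reading of the assumption.
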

The proof can be seen in \cite[Lemma 1]{GJ}, which is a direct result of  Taylor's expansion and Markov's inequality.

Recall the notation
\begin{equation}\label{barFN}
(\overline K^N(\overline X_t))_i:=\int_{\RR^3} k^N(\overline x_i^t-x)\rho^N(x,t)dx.
\end{equation}
We can introduce the following version of the Law of Large Numbers:
\begin{lem}\label{lmlarge} At any fixed time $t\in[0,T]$, suppose that $\overline X_t$ satisfies the mean-field dynamics \eqref{eq:mean}, $K^N$ and $\overline K^N$ are defined in \eqref{FN} and \eqref{barFN} respectively,  $L^N$ and $\overline L^N$ are introduced in Definition \ref{defLN}. For any $\alpha>0$ and $\frac{1}{3}\leq\delta<1$, there exist a constant $C_{1,\alpha}>0$ depending only on $\alpha$, $T$ and $C_{f_0}$ such that
	\begin{equation}\label{largef}
	\PP\left(\nel K^N(\overline X_t)-\overline K^N(\overline X_t)\ner_\infty\geq C_{1,\alpha} N^{2\delta-1}\log (N)\right)\leq N^{-\alpha},
	\end{equation}
	and
	\begin{equation}\label{largel}
	\PP\left(\nel L^N(\overline X_{t})-\overline{L}^N(\overline X_{t})\ner_\infty\geq C_{1,\alpha} N^{3\delta-1}\log (N)\right)\leq N^{-\alpha}.
	\end{equation}
\end{lem}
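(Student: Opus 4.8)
The plan is to apply the concentration inequality of Lemma \ref{central} coordinatewise, i.e.\ for each fixed index $i$, to the sum that defines $\big(K^N(\overline X_t) - \overline K^N(\overline X_t)\big)_i$, and then take a union bound over $i=1,\dots,N$. Fix $t$ and fix $i$. Conditioning on the position $\overline x_i^t$, define for $j \neq i$ the random variables
\begin{equation}
Z_j := k^N(\overline x_i^t - \overline x_j^t) - \int_{\RR^3} k^N(\overline x_i^t - x)\rho^N(x,t)\,dx .
\end{equation}
Because $\Psi_t$ is distributed i.i.d.\ with common density $f^N$ (hence the $\overline x_j^t$ have common spatial density $\rho^N(\cdot,t)$), the $Z_j$ (for $j\neq i$, conditionally on $\overline x_i^t$) are i.i.d.\ with $\EE[Z_j]=0$. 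Then $\big(K^N(\overline X_t) - \overline K^N(\overline X_t)\big)_i = \frac{1}{N-1}\sum_{j\neq i} Z_j$ is exactly a sample mean of the type handled by Lemma \ref{central}.

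Next I would verify the two hypotheses of Lemma \ref{central} with an appropriate choice of $g(N)$. For the second moment, $\EE[Z_j^2] \leq \EE\big[|k^N(\overline x_i^t - \overline x_j^t)|^2\big] = \int |k^N(\overline x_i^t - x)|^2 \rho^N(x,t)\,dx \leq \|\rho^N(\cdot,t)\|_\infty \|k^N\|_2^2 \leq C N^{\delta}$ by Lemma \ref{lmkenerl}(iii) and the uniform-in-$N$ bound \eqref{regularity} on $\rho^N$; so we may take $g(N) = C N^{\delta}$. For the pointwise bound, $|Z_j| \leq |k^N(\overline x_i^t-\overline x_j^t)| + \|k^N\ast\rho^N\|_\infty \leq C N^{2\delta} + C$ by Lemma \ref{lmkenerl}(ii) (or, more simply, the $L^\infty$ bound $\|k^N\|_\infty \leq C N^{2\delta}$ from $(ii)$ with $\beta=0$) and Lemma \ref{converse}. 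Since $\delta < 1$, one checks $N^{2\delta} \leq C\sqrt{N g(N)} = C N^{(1+\delta)/2}$ precisely when $2\delta \leq (1+\delta)/2$, i.e.\ $\delta \leq 1/3$; at $\delta = 1/3$ this is an equality, and for $\delta$ slightly larger one instead uses the weaker pointwise bound coming from the fact that only a vanishing fraction of indices can fall in the cut-off region --- but the cleanest route is to note that $\|k^N\|_\infty \leq CN^{\delta}$ is \emph{not} true, so one must be slightly careful here. Granting the pointwise bound $|Z_j|\le C\sqrt{Ng(N)}$ (which holds for $\delta$ in the stated range after a short argument), Lemma \ref{central} gives, for each fixed $i$,
\begin{equation}
\PP\left(\Big|\big(K^N(\overline X_t) - \overline K^N(\overline X_t)\big)_i\Big| \geq \frac{C_\alpha \sqrt{N^\delta}\,\log N}{\sqrt{N}}\right) \leq N^{-(\alpha+1)} .
\end{equation}
Here I have replaced $\alpha$ by $\alpha+1$ in the application of Lemma \ref{central} so that the union bound over the $N$ choices of $i$ yields total probability at most $N\cdot N^{-(\alpha+1)} = N^{-\alpha}$. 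Since $\sqrt{N^\delta}/\sqrt{N} = N^{(\delta-1)/2} \leq N^{2\delta-1}$ for $\delta \geq 1/3$, the resulting bound on $\|K^N(\overline X_t) - \overline K^N(\overline X_t)\|_\infty$ is dominated by $C_{1,\alpha} N^{2\delta-1}\log N$, which is \eqref{largef}.

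The second estimate \eqref{largel} is proved by the identical scheme with $k^N$ replaced by $\ell^N$: set $W_j := \ell^N(\overline x_i^t - \overline x_j^t) - (\overline L^N(\overline X_t))_i$, which are conditionally i.i.d.\ with mean zero. The only inputs that change are the moment bounds: by Lemma \ref{converse}, $\EE[W_j^2] \leq \|(\ell^N)^2 \ast \rho^N\|_\infty \leq C N^{3\delta}(\|\rho^N\|_1 + \|\rho^N\|_\infty) \leq C N^{3\delta}$, so now $g(N) = C N^{3\delta}$, and $|W_j| \leq \|\ell^N\|_\infty + \|\ell^N\ast\rho^N\|_\infty \leq N^{3\delta} + C\log N \leq C\sqrt{N g(N)}$ since $\sqrt{Ng(N)} = CN^{(1+3\delta)/2}$ and $3\delta \le (1+3\delta)/2 \iff \delta \le 1/3$ — again tight at $\delta = 1/3$ and requiring the same minor care for larger $\delta$. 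Lemma \ref{central} with $\alpha+1$ in place of $\alpha$ then gives a per-index bound of order $\sqrt{N^{3\delta}}\log N/\sqrt N = N^{(3\delta-1)/2}\log N \leq N^{3\delta-1}\log N$, and the union bound over $i$ finishes \eqref{largel}.

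The main obstacle is the verification of the pointwise hypothesis $|Z_j| \leq C\sqrt{N g(N)}$ of Lemma \ref{central} in the full range $\delta \in [1/3, 1)$: the naive bound $|k^N| \leq CN^{2\delta}$ is too large once $\delta$ exceeds $1/3$, so one either restricts attention to the regime actually needed (in Theorem \ref{mainthm} one has $\delta < 19/54$, still larger than $1/3$), or — as I expect the authors do — one observes that the probability that $\overline x_j^t$ lands within distance $N^{-\delta}$ of $\overline x_i^t$ is $O(N^{-3\delta})$, so that a truncated version of $Z_j$ satisfies a better pointwise bound while the contribution of the exceptional event is controlled separately by another application of the Law of Large Numbers (bounding the number of close pairs). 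Modulo this standard truncation, everything else is a routine second-moment computation using Lemma \ref{lmkenerl}, Lemma \ref{converse}, and the regularity bound \eqref{regularity}.
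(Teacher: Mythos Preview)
Your overall strategy---condition on $\overline x_i^t$, apply Lemma~\ref{central} to the centered summands, then union-bound over $i$---is exactly the paper's approach, and your moment computations are correct. The only place you get stuck is the pointwise hypothesis $|Z_j|\le C\sqrt{Ng(N)}$, and here you are making the problem harder than it is.

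The observation you are missing is that the hypothesis of Lemma~\ref{central} is $\mathbb{E}[Z_j^2]\le g(N)$, an \emph{inequality}: nothing forces you to take $g(N)$ equal to the actual variance bound. The paper simply inflates $g(N)$ so that both hypotheses hold simultaneously. Concretely, since $\mathbb{E}[Z_j^2]\le CN^{\delta}$ and $|Z_j|\le CN^{2\delta}$, one takes $g(N)=CN^{4\delta-1}$; then $\sqrt{Ng(N)}=CN^{2\delta}$ makes the pointwise bound automatic, and $\mathbb{E}[Z_j^2]\le CN^{\delta}\le CN^{4\delta-1}=g(N)$ holds because $\delta\ge\tfrac13$. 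The conclusion of Lemma~\ref{central} then gives the threshold $\sqrt{g(N)/N}\,\log N = N^{2\delta-1}\log N$ directly, with no truncation or close-pair counting needed. The same device works for $\ell^N$: with $\mathbb{E}[W_j^2]\le CN^{3\delta}$ and $|W_j|\le CN^{3\delta}$, set $g(N)=CN^{6\delta-1}$ to get $\sqrt{Ng(N)}=CN^{3\delta}$ and threshold $N^{3\delta-1}\log N$. Your proposed truncation argument is not wrong, but it is unnecessary; the paper's choice of $g(N)$ is the one-line fix.
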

\begin{proof}
	We can prove this lemma by using Lemma \ref{central}.
	Due to the exchangeability of the particles, we are ready to bound
	\begin{equation}
	(K^N(\overline X_t))_1-(\overline K^N(\overline X_t))_1=\frac{1}{N-1}\sum_{j=2}^Nk^N(\overline x_1^t-\overline x_j^t)-\int_{\RR^3} k^N(\overline x_1^t-x)\rho^N(x,t)dx=\frac{1}{N-1}\sum_{j=2}^{N}Z_j,
	\end{equation}
	where $$Z_j:=k^N(\overline x_1^t-\overline x_j^t)-\int_{\RR^3} k^N(\overline x_1^t-x)\rho^N(x,t)dx.$$ 
	Since $\overline x_1^t$ and $\overline x_j^t$ are independent when $j\neq 1$ and $k^N(0)=0$, let us consider $\overline x_1^t$ as given and denote $\mathbb{E'}[\cdot]=\mathbb{E}[\cdot|\overline x_1^t]$. It is easy to show that
	$\mathbb{E}'[Z_j]=0$ since
	\begin{align}
	\mathbb{E}'\left[k^N(\overline x_1^t-\overline x_j^t)\right]&=\iint_{\RR^6} k^N(\overline x_1^t-x)f^N(x,v,t)dxdv\notag \\
	&=\int_{\RR^3} k^N(\overline x_1^t-x)\rho^N(x,t)dx.
	\end{align}
	
	To use Lemma \ref{central}, we need a bound for the variance
	\begin{equation}
	\mathbb{E}'\big[|Z_j|^2\big]=\mathbb{E}'\left[\left|k^N(\overline x_1^t-\overline x_j^t)-\int_{\RR^3} k^N(\overline x_1^t-x)\rho^N(x,t)dx\right|^2\right].
	\end{equation}
	Since it follows from Lemma \ref{converse} that
	\begin{equation}
\int_{\RR^3} k^N(\overline x_1^t-x)\rho^N(x,t)dx\leq C(\norm{\rho^N}_1+\norm{\rho^N}_\infty),
	\end{equation}
	it suffices to bound
	\begin{equation}
	\mathbb{E'}\big[k^N(\overline x_1^t-\overline x_j^t)\big]=\int_{\RR^3} k^N(\overline x_1^t-x)\rho^N(x,t)dx\leq C(\norm{\rho^N}_1+\norm{\rho^N}_\infty)\leq C(T,C_{f_0}),
	\end{equation}
	and
	\begin{equation}
	\mathbb{E'}\big[k^N(\overline x_1^t-\overline x_j^t)^2\big]=\int_{\RR^3} k^N(\overline x_1^t-x)^2\rho^N(x,t)dx\leq \norm{\rho^N}_\infty\norm{k^N}_2^2\leq C(T,C_{f_0})N^{\delta},
	\end{equation}
	where we have used $\norm{k^N}_2\leq CN^{\frac{\delta}{2}}$   in Lemma \ref{lmkenerl} $(iii)$.
	Hence one has
	\begin{equation}\label{Esqure}
	\mathbb{E}'\big[|Z_j|^2\big]\leq CN^{\delta}.
	\end{equation}
	
So the hypotheses of Lemma \ref{central} are satisfied with $g(N)=CN^{4\delta-1}$. In addition, it follows from $(ii)$ in Lemma \ref{lmkenerl} that $|Z_j|\leq CN^{2\delta}\leq C\sqrt{Ng(N)}$. Hence, using Lemma \ref{central}, we have the probability bound 
	\begin{equation}
	\PP\left(\left|(K^N(\overline X_t))_1-(\overline K^N(\overline X_t))_1\right|\geq C(\alpha,T,C_{f_0}) N^{2\delta-1}\log (N)\right)\leq N^{-\alpha}.
	\end{equation}
	Similarly, the same bound  also  holds for all other indexes  $i=2,\cdots,N$, which leads to
	\begin{equation}\label{residual1'}
	\PP\left(\nel K^N(\overline X_t)-\overline K^N(\overline X_t)\ner_\infty\geq C(\alpha,T,C_{f_0}) N^{2\delta-1}\log (N)\right)\leq N^{1-\alpha}.
	\end{equation}
	Let $C_{1,\alpha}$ be the constant $C(\alpha,T,C_{f_0}) $ in  \eqref{residual1'}, then we conclude \eqref{largef}.
	
	To prove \eqref{largel}, we follow the same procedure as above
	\begin{equation}
	(L^N(\overline X_t))_1-(\overline L^N(\overline X_t))_1=\frac{1}{N-1}\sum_{j=2}^N\ell^N(\overline x_1^t-\overline x_j^t)-\int_{\RR^3} \ell^N(\overline x_1^t-x)\rho^N(x,t)dx=\frac{1}{N-1}\sum_{j=2}^{N}Z_j,
	\end{equation}
	where $$Z_j=\ell^N(\overline x_1^t-\overline x_j^t)-\int_{\RR^3} \ell^N(\overline x_1^t-x)\rho^N(x,t)dx.$$ 
It is easy to show that $\mathbb{E}'[Z_j]=0$.
	To use Lemma \ref{central}, we need a bound for the variance. One computes that
	\begin{equation}
	\mathbb{E'}\big[\ell^N(\overline x_1^t-\overline x_j^t)\big]=\int_{\RR^3} \ell^N(\overline x_1^t-x)\rho^N(x,t)dx\leq C\log(N)(\norm{\rho}_1+\norm{\rho}_\infty)\leq C(T,C_{f_0})\log(N),
	\end{equation}
	and
	\begin{equation}
	\mathbb{E'}\big[\ell^N(\overline x_1^t-\overline x_j^t)^2\big]=\int_{\RR^3} \ell^N(\overline x_1^t-x)^2\rho^N(x,t)dx\leq CN^{3\delta}(\norm{\rho}_1+\norm{\rho}_\infty)\leq C(T,C_{f_0})N^{3\delta},
	\end{equation}
	where we have used the estimates of $\ell^N$ in Lemma \ref{converse}.
	Hence one has
	\begin{equation}
	\mathbb{E}'\big[|Z_j|^2\big]\leq CN^{3\delta}.
	\end{equation}
	
	So the hypotheses of Lemma \ref{central} are satisfied with $g(N)=CN^{6\delta-1}$. In addition, it follows from Definition \ref{defLN}  that $|Z_j|\leq CN^{3\delta}\leq C\sqrt{Ng(N)}$. Hence, we have the probability bound 
	\begin{equation}
	\PP\left(\left|(L^N(\overline X_t))_1-(\overline L^N(\overline X_t))_1\right|\geq C(\alpha,T,C_{f_0}) N^{3\delta-1}\log (N)\right)\leq N^{-\alpha},
	\end{equation}
	by Lemma \ref{central}, which leads to
	\begin{equation}\label{residual1''}
	\PP\left(\nel L^N(\overline X_t)-\overline L^N(\overline X_t)\ner_\infty\geq C(\alpha,T,C_{f_0}) N^{3\delta-1}\log (N)\right)\leq N^{1-\alpha}.
	\end{equation}
Thus, \eqref{largel} follows from \eqref{residual1''}.
	
\end{proof}

\section{Proof of Theorem \ref{mainthm}}
We do the proof by following the idea in \cite{HH2,HH1}, which is that consistency and stability imply convergence. This at least in principle corresponds to the Lax's equivalence theorem of proving the convergence of a numerical algorithm, which is
that stability and consistency of an algorithm imply its convergence. 

\subsection{Consistency}

In order to obtain the  consistency error for the entire time interval, we divide $[0,T]$ into $M+1$ subintervals with length $\Delta \tau=N^{-\frac{\gamma}{3}}$ for some $\gamma>4$ and $\tau_k=n\Delta \tau$, $k=0,\cdots,M+1$. The choice of $\gamma$ will be clear from the discussion below. Here the choice of $\Delta \tau$ is only for the purpose of proving consistency and it can be sufficiently small. Note that it is different from $\Delta t$ in the proof of stability in the next subsection.

First, we establish the following lemma on the traveling distance of $\overline X_t$ in a short time interval $[\tau_k,\tau_{k+1}]$:
\begin{lem}\label{lmQ}
	Assume that $(\overline X_t,\overline V_t)$  satisfies the mean-field dynamics \eqref{eq:mean}. For $\gamma>4$ it holds
	\begin{equation}\label{lmQeq1}
	\PP\left(\max\limits_k\max\limits_{t\in[\tau_k,\tau_{k+1}]}\nel\overline X_t-\overline X_{\tau_k}\ner_\infty\geq C_BN^{-\frac{\gamma-1}{3}}\right)\leq C_BN^{\frac{\gamma-1}{3}}\exp(-C_BN^{\frac{2}{3}}),
	\end{equation}
	where $C_B$ depends only on $T$ and $C_{f_0}$.
\end{lem}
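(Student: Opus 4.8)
The plan is to control the increment $\overline X_t - \overline X_{\tau_k}$ by integrating the velocity equation and then controlling the velocity increment, which in turn is driven by the mean-field force and the Brownian motion. From \eqref{eq:mean} we have, for $t\in[\tau_k,\tau_{k+1}]$,
\begin{equation*}
\overline x_i^t - \overline x_i^{\tau_k} = \int_{\tau_k}^t \overline v_i^s\, ds, \qquad \overline v_i^t - \overline v_i^{\tau_k} = \int_{\tau_k}^t (\overline K^N(\overline X_s))_i\, ds + \sqrt{2\sigma}\,(B_i^t - B_i^{\tau_k}).
\end{equation*}
The first step is to obtain a bound on $\|\overline V_t\|_\infty$ that holds with overwhelming probability: since by \eqref{con2} the mean-field force $\overline K^N$ is bounded by $C(\norm{\rho^N}_1+\norm{\rho^N}_\infty)\leq C_{f_0}$ uniformly in $N$ and $t$, and since $f_0$ is compactly supported in velocity by Assumption \ref{assum}, the deterministic part of $\overline v_i^t$ stays in a ball of radius $Q_v + C_{f_0}T$; the only unbounded contribution is the Brownian one. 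Hence on the event that $\max_i\max_{t\in[0,T]}|\sqrt{2\sigma}B_i^t|\leq C N^{1/3}$ (say), which by the Gaussian tail bound and a union bound over $i=1,\dots,N$ fails with probability at most $N\exp(-cN^{2/3})$, we get $\|\overline V_t\|_\infty \leq C_B N^{1/3}$ for all $t\in[0,T]$; this is presumably the place where the bound $\eqref{compactv}$ referenced in the remark is used. Here I would keep the velocity bound at scale $N^{1/3}$ so that multiplying by $\Delta\tau = N^{-\gamma/3}$ produces the claimed $N^{-(\gamma-1)/3}$.

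The second step is to bound the position increment on each subinterval. On the good event above, $\|\overline X_t - \overline X_{\tau_k}\|_\infty \leq \int_{\tau_k}^t \|\overline V_s\|_\infty\, ds \leq C_B N^{1/3}\,\Delta\tau = C_B N^{-(\gamma-1)/3}$, directly giving the deterministic bound in \eqref{lmQeq1}. For the velocity increment one similarly gets the drift contribution bounded by $C_{f_0}\Delta\tau$, which is negligible, plus the Brownian increment $\sqrt{2\sigma}|B_i^t - B_i^{\tau_k}|$ over a window of length $\Delta\tau$; but since the lemma as stated only asserts control of $\|\overline X_t-\overline X_{\tau_k}\|_\infty$, the Brownian increment of the velocity does not even need to be estimated here, only the uniform-in-$[0,T]$ Brownian bound used in Step~1.

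The third step is the union bound over the $M+1 = O(N^{\gamma/3})$ subintervals and over the $N$ particles. The only randomness is in the single global event $\{\max_i\max_{t\in[0,T]}|B_i^t|\leq C N^{1/3}\}$, so in fact no per-subinterval union bound is needed for the event itself; the factor $C_B N^{(\gamma-1)/3}$ in front of the exponential in \eqref{lmQeq1} is just the crude bound $N \le C_B N^{(\gamma-1)/3}$ (valid since $\gamma>4$) on the number of particles in the Gaussian union bound, and $\exp(-C_B N^{2/3})$ is the Gaussian tail at level $N^{1/3}$ for Brownian motion run up to time $T$. The main obstacle — really the only subtle point — is making the uniform-in-time Brownian bound rigorous: one must invoke the reflection principle (or Doob's maximal inequality for the exponential martingale) to pass from a pointwise Gaussian tail to $\PP(\sup_{t\in[0,T]}|B_i^t|\geq x)\leq C e^{-x^2/(2T)}$, and then choose the threshold $x \sim N^{1/3}$ so that the exponent is $\sim N^{2/3}$ and the increment bound after multiplying by $\Delta\tau$ is exactly $N^{-(\gamma-1)/3}$. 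Everything else is a routine triangle-inequality estimate using the uniform force bound from Lemma \ref{converse}.
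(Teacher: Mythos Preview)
Your proof is correct and uses the same ingredients as the paper's---the uniform bound on the mean-field force from Lemma~\ref{converse}, the compact support of $f_0$ in velocity, and a Gaussian tail bound for the Brownian motion---but organized more efficiently. The paper decomposes $\overline X_t - \overline X_{\tau_k}$ into three terms $I_1^k+I_2^k+I_3^k$ (double integral of the force, integral of the Brownian increment over $[\tau_k,t]$, and $\overline V_{\tau_k}$ times elapsed time) and bounds each separately, incurring a union bound over the $O(N^{\gamma/3})$ subintervals for the Brownian contributions; in their argument the prefactor $N^{(\gamma-1)/3}$ arises from this subinterval union bound. You instead establish a single global event $\{\max_i\sup_{t\in[0,T]}|B_i^t|\leq CN^{1/3}\}$ on which $\|\overline V_t\|_\infty\leq C_B N^{1/3}$ uniformly, and then integrate, so no per-subinterval union bound is needed and the prefactor comes only from the union over particles via $N\leq N^{(\gamma-1)/3}$ for $\gamma>4$. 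Your route is slightly cleaner; the paper's three-term decomposition has the advantage of displaying the scale hierarchy explicitly ($I_1\sim N^{-2\gamma/3}$, $I_2\sim N^{-(\gamma+1)/3}$, $I_3\sim N^{-(\gamma-1)/3}$), which clarifies that the velocity-at-$\tau_k$ term is the dominant one.
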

\begin{proof}
	Notice that for $t\in [\tau_k,\tau_{k+1}]$
	\begin{align}\label{Qdiff}
	\overline X_t-\overline X_{\tau_k}&=\int_{\tau_k}^{t}\overline V_sds=\int_{\tau_k}^{t}\int_{\tau_k}^{s}\overline K^N(\overline X_\tau)d\tau ds+\sqrt{2\sigma}\int_{\tau_k}^{t}(B(s)-B(\tau_k))ds+\int_{\tau_k}^{t}\overline V_{\tau_k}ds,\notag \\
	&=:I_1^k(t)+I_2^k(t)+I_3^k(t),
	\end{align}
	where
	\begin{equation}
	\overline V_{\tau_k}=V_0+\int_{0}^{\tau_k}\overline K^N(\overline X_s)ds+\sqrt{2\sigma}B(\tau_k).
	\end{equation}

The estimate of $I_1^k(t)$ follows from Lemma \ref{converse}
\begin{equation}
\int_{\tau_k}^{t}\int_{\tau_k}^{s}\overline K^N(\overline X_\tau)d\tau ds\leq (\Delta t)^2\|\overline K^N\|_\infty\leq CN^{-\frac{2\gamma}{3}}.
\end{equation}
So we have
\begin{equation}\label{I1}
\max\limits_k\max\limits_{t\in[\tau_k,\tau_{k+1}]}\norm{I_1^k(t)}_\infty\leq CN^{-\frac{2\gamma}{3}}.
\end{equation}
	
To estimate $I_2^k(t)$, recall a basic property of Brownian motion \cite[Chap. 1.2]{freedman1983brownian}:
	\begin{equation}\label{Bproperty}
	\PP\left(\max\limits_{t\leq s\leq t+\Delta t}\|B(s)-B(t)\|_\infty\geq b\right)\leq C_1(\sqrt{\Delta t}/b)\exp(-C_2b^2/\Delta t),
	\end{equation}
	which leads to 
	\begin{equation}\label{57}
		\PP\left(\max\limits_{t\in[\tau_k,\tau_{k+1}]}\|B(t)-B(\tau_k)\|_\infty\geq N^{-\frac{1}{3}}\right)\leq C_1N^{-\frac{\gamma-2}{6}}\exp(-C_2N^{\frac{\gamma-2}{3}}),
	\end{equation}
	where we choose $b=N^{-\frac{1}{3}}$.
	
	Since $\max\limits_{t\in[\tau_k,\tau_{k+1}]}\norm{I_2^k(t)}_\infty\leq \Delta t \sqrt{2\sigma} \max\limits_{t\in[\tau_k,\tau_{k+1}]}\|B(t)-B(\tau_k)\|_\infty$, it follows from \eqref{57} that
	\begin{equation}
	\PP\left(\max\limits_{t\in[\tau_k,\tau_{k+1}]}\norm{I_2^k(t)}_\infty\geq C N^{-\frac{\gamma+1}{3}}\right)\leq C_1N^{-\frac{\gamma-2}{6}}\exp(-C_2N^{\frac{\gamma-2}{3}}),
	\end{equation}
	which leads to 
		\begin{equation}\label{I2}
		\PP\left(\max\limits_k\max\limits_{t\in[\tau_k,\tau_{k+1}]}\norm{I_2^k(t)}_\infty\geq C N^{-\frac{\gamma+1}{3}}\right)\leq C_1N^{\frac{\gamma+2}{6}}\exp(-C_2N^{\frac{\gamma-2}{3}}),
		\end{equation}
	where we used the fact that $n\leq\frac{T}{\Delta t}=TN^{\frac{\gamma}{3}}$.
	
	Lastly, we prove the estimate of $I_3^k(t)$. It is obvious that
	\begin{equation}
	\int_{0}^{\tau_k}\overline K^N(\overline X_s)ds\leq n \Delta t \norm{\overline K^N}_\infty\leq CT,
	\end{equation}
	and it follows from \eqref{Bproperty} that
	\begin{equation}
	\PP(\|B(\tau_k)\|_\infty\geq N^{\frac{1}{3}})\leq C_1N^{-\frac{1}{3}}\sqrt{T}\exp(-C_2N^{\frac{2}{3}}/T).
	\end{equation}
	Moreover,  it follows from the assumption in Theorem \ref{existence} $b)$  the distribution $f_0^v(v)$ of $V_0$ has a compact support:
	\begin{equation}
	f_0^v(v)=\int_{\RR^3}f_0(x,v)dx=0,\mbox{ when }|v|> Q_v .
	\end{equation}
	Then one has
	\begin{equation}\label{compactv}
	\PP(\|V_0\|_\infty\geq N^{\frac{1}{3}})=\int_{|v|\geq N^{\frac{1}{3}}}f_0^v(v)dv=0, \mbox{ when }N> Q_v ^3.
	\end{equation}
	It follows from \eqref{Qdiff} that 
	\begin{align}
\max\limits_{t\in[\tau_k,\tau_{k+1}]}\|I_3^k(t)\|_\infty&=\int_{\tau_k}^{t}\|\overline V_{\tau_k}\|_\infty ds\leq  N^{-\frac{\gamma}{3}}\left(\|V_0\|_\infty+\sqrt{2\sigma}\|B(\tau_k)\|_\infty+\int_{0}^{\tau_k}\overline K^N(\overline X_s)ds\right)\notag\\
&\leq  N^{-\frac{\gamma}{3}}(\|V_0\|_\infty+\sqrt{2\sigma}\|B(\tau_k)\|_\infty)+CN^{-\frac{\gamma}{3}},
	\end{align}
	then it yields
	\begin{align}
	&\PP\left(\max\limits_{t\in[\tau_k,\tau_{k+1}]}\nel I_3^k(t)\ner_\infty \geq3N^{-\frac{\gamma-1}{3}}\right)\notag \\
	\leq &\PP\left(N^{-\frac{\gamma}{3}}\|V_0\|_\infty\geq N^{-\frac{\gamma-1}{3}}\right)+\PP\left(\sqrt{2\sigma}N^{-\frac{\gamma}{3}}\|B(\tau_k)\|_\infty\geq N^{-\frac{\gamma-1}{3}}\right)+\PP\left(CN^{-\frac{\gamma}{3}}\geq N^{-\frac{\gamma-1}{3}}\right)\notag \\
	\leq &0+CN^{-\frac{1}{3}}\exp(-CN^{\frac{2}{3}}) +0\leq CN^{-\frac{1}{3}}\exp(-CN^{\frac{2}{3}}) ,
	\end{align}
	which leads to
	
		\begin{align}\label{I3}
		\PP\left(\max\limits_k\max\limits_{t\in[\tau_k,\tau_{k+1}]}\nel I_3^k(t)\ner_\infty\geq3N^{-\frac{\gamma-1}{3}}\right)\leq CN^{\frac{\gamma-1}{3}}\exp(-CN^{\frac{2}{3}}).
		\end{align}
	Then it follows from \eqref{I1}, \eqref{I2} and \eqref{I3} that
		\begin{align*}
		&\PP\left(\max\limits_k\max\limits_{t\in[\tau_k,\tau_{k+1}]}\nel\overline X_t-\overline X_{\tau_k}\ner_\infty\geq CN^{-\frac{\gamma-1}{3}}\right)\notag\\
		\leq &C_1N^{\frac{\gamma+2}{6}}\exp(-C_2N^{\frac{\gamma-2}{3}})+CN^{\frac{\gamma-1}{3}}\exp(-CN^{\frac{2}{3}}) \leq CN^{\frac{\gamma-1}{3}}\exp(-CN^{\frac{2}{3}}) , 
		\end{align*}
	   for $\gamma>4$,	which completes the proof of \eqref{lmQeq1}.
\end{proof}

Now we can prove the consistency error for the entire time interval $[0,T]$. 
\begin{proposition}\label{propconsis}(Consistency) For any $T>0$, let $(\overline X_t,\overline V_t)$ satisfy the mean-field dynamics \eqref{eq:mean} with initial density $f_0(x,v)$, $K^N$ and $\overline K^N$ be defined in \eqref{FN} and \eqref{barFN} respectively. For any $\alpha>0$ and $\frac{1}{3}\leq\delta<1$, there exist a constant $C_{2,\alpha}>0$ depending only on $\alpha$, $T$ and $C_{f_0}$ such that
	\begin{equation}\label{consistency}
	\PP\left(\max\limits_{t\in[0,T]}\nel K^N(\overline X_t)-\overline K^N(\overline X_t)\ner_\infty\geq C_{2,\alpha} N^{2\delta-1}\log (N)\right)\leq  N^{-\alpha},
	\end{equation}
	and
		\begin{equation}\label{consistency1}
		\PP\left(\max\limits_{t\in[0,T]}\nel L^N(\overline X_t)-\overline L^N(\overline X_t)\ner_\infty\geq C_{2,\alpha} N^{3\delta-1}\log (N)\right)\leq  N^{-\alpha}.
		\end{equation}
\end{proposition}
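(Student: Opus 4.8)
The plan is to bootstrap the fixed-time estimates of Lemma \ref{lmlarge} to a bound uniform in $t\in[0,T]$ using the time grid $\{\tau_k\}_{k=0}^{M}$ introduced above, a union bound over the grid points, and the short-time oscillation control of Lemma \ref{lmQ}. I treat \eqref{consistency} in detail; \eqref{consistency1} follows along the same lines with $(\ell^N,L^N,\overline L^N,3\delta-1)$ in place of $(k^N,K^N,\overline K^N,2\delta-1)$.

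Fix $\gamma>4$. For $t\in[\tau_k,\tau_{k+1}]$ split
\begin{align*}
\nel K^N(\overline X_t)-\overline K^N(\overline X_t)\ner_\infty
&\leq \nel K^N(\overline X_t)-K^N(\overline X_{\tau_k})\ner_\infty
+\nel K^N(\overline X_{\tau_k})-\overline K^N(\overline X_{\tau_k})\ner_\infty\\
&\quad+\nel \overline K^N(\overline X_{\tau_k})-\overline K^N(\overline X_t)\ner_\infty
=:\mathrm{(I)}+\mathrm{(II)}+\mathrm{(III)}.
\end{align*}
For $\mathrm{(II)}$, apply \eqref{largef} (and, for the companion bound, \eqref{largel}) at each $\tau_k$ with $\alpha$ enlarged to $\alpha':=\alpha+\gamma/3+2$, and take a union bound over the $M+1\leq TN^{\gamma/3}+1$ grid points; this bounds $\max_k\mathrm{(II)}$ by $C_{1,\alpha'}N^{2\delta-1}\log N$ outside an event of probability $\leq\tfrac12 N^{-\alpha}$. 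For $\mathrm{(III)}$, the mean-field force $\overline K^N=k^N\ast\rho^N$ is Lipschitz uniformly in $N$, since $\nel\nabla\overline K^N\ner_\infty\leq C(\nel\nabla\rho^N\ner_1+\nel\nabla\rho^N\ner_\infty)\leq C_{f_0}$ by Lemma \ref{converse} and the $N$-uniform version of \eqref{regularity}; hence on the good event of Lemma \ref{lmQ}, which holds with probability $\geq 1-C_BN^{(\gamma-1)/3}e^{-C_BN^{2/3}}$, one has $\nel\overline X_t-\overline X_{\tau_k}\ner_\infty\leq C_BN^{-(\gamma-1)/3}$ and therefore $\mathrm{(III)}\leq C_{f_0}C_BN^{-(\gamma-1)/3}=o(N^{2\delta-1})$ because $\delta\geq\tfrac13$ and $\gamma>4$.

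The crux is $\mathrm{(I)}$, where the naive Lipschitz constant of $k^N$ degenerates like $N^{3\delta}$. On the good event of Lemma \ref{lmQ} we have $\nel\overline X_t-\overline X_{\tau_k}\ner_\infty\leq C_BN^{-(\gamma-1)/3}\leq 2N^{-\delta}$ for $N$ large, so Lemma \ref{lmlip} applies with the pair $(\overline X_t,\overline X_{\tau_k})$ and gives $\mathrm{(I)}\leq C\nel L^N(\overline X_{\tau_k})\ner_\infty\nel\overline X_t-\overline X_{\tau_k}\ner_\infty$. It then remains to bound $\nel L^N(\overline X_{\tau_k})\ner_\infty$: by the triangle inequality, \eqref{largel} at $\tau_k$ (already covered by the union bound above), and $\nel\overline L^N\ner_\infty=\nel\ell^N\ast\rho^N\ner_\infty\leq C\log N$ from Lemma \ref{converse}, we get $\nel L^N(\overline X_{\tau_k})\ner_\infty\leq C_{1,\alpha'}N^{3\delta-1}\log N+C\log N\leq CN^{3\delta-1}\log N$ on the good event. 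Hence $\mathrm{(I)}\leq CN^{3\delta-1}\log N\cdot C_BN^{-(\gamma-1)/3}=CN^{3\delta-1-(\gamma-1)/3}\log N\leq N^{2\delta-1}\log N$ provided $\gamma\geq 3\delta+1$, which holds for any $\gamma>4$ since $\delta<1$. Combining $\mathrm{(I)},\mathrm{(II)},\mathrm{(III)}$, maximizing over $k$ and over $t\in[\tau_k,\tau_{k+1}]$, and folding the polynomial union-bound loss and the exponentially small failure probability of Lemma \ref{lmQ} into the final probability yields \eqref{consistency} (with small $N$ trivial since $\nel K^N-\overline K^N\ner_\infty\lesssim N^{2\delta}$). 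For \eqref{consistency1} the only new ingredient is the sharpened local Lipschitz estimate $|\ell^N(x+\xi)-\ell^N(x)|\leq CN^\delta\ell^N(x)|\xi|$ for $|\xi|\leq 4N^{-\delta}$ (which follows from $|\nabla\ell^N|\leq CN^\delta\ell^N$, i.e.\ the one-derivative analogue of the estimate behind Lemma \ref{lmlip}); this contributes an extra factor $N^\delta$ in the analogue of $\mathrm{(I)}$, again absorbed by $N^{-(\gamma-1)/3}$ once $\gamma\geq 3\delta+1$. The main obstacle, and the reason $\Delta\tau$ must be taken polynomially small rather than of order $N^{-\delta}$, is exactly this balancing of the degenerate Lipschitz constants $N^{3\delta}$ and $N^{4\delta}$ against the displacement bound of Lemma \ref{lmQ}.
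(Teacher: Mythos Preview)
Your argument is essentially the same as the paper's, with the same three-term decomposition, the same use of Lemma~\ref{lmlip} for term~(I) together with the bound $\|L^N(\overline X_{\tau_k})\|_\infty\leq CN^{3\delta-1}\log N$ on the good event, and the same union bound over the $O(N^{\gamma/3})$ grid points for term~(II). Your additional detail on~\eqref{consistency1}---the pointwise estimate $|\ell^N(x+\xi)-\ell^N(x)|\leq CN^{\delta}\ell^N(x)|\xi|$---is correct and makes explicit what the paper only sketches.

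There is one small omission in your handling of~(III). The quantity $(\overline K^N(\overline X_t))_i=\int k^N(\overline x_i^t-x)\rho^N(x,t)\,dx$ depends on $t$ in two ways: through the evaluation point $\overline x_i^t$ \emph{and} through the density $\rho^N(\cdot,t)$. Your bound $\mathrm{(III)}\leq C_{f_0}\|\overline X_t-\overline X_{\tau_k}\|_\infty$ via the spatial Lipschitz constant of $k^N\ast\rho^N$ captures only the first dependence. You also need
\[
\bigl\|k^N\ast\rho^N(\cdot,t)-k^N\ast\rho^N(\cdot,\tau_k)\bigr\|_\infty
\leq C\bigl(\|\rho^N_t-\rho^N_{\tau_k}\|_1+\|\rho^N_t-\rho^N_{\tau_k}\|_\infty\bigr)
\leq C\Delta\tau\,\max_{s}\|\partial_s\rho^N(\cdot,s)\|_{\infty}\leq C N^{-\gamma/3},
\]
which follows from~\eqref{partialtrho} (and its $N$-uniform analogue). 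This term is harmless---it is $o(N^{2\delta-1})$ for any $\gamma>4$---so once you insert it the proof goes through unchanged. The same remark applies to the $\overline L^N$ term in the proof of~\eqref{consistency1}, with an extra $\log N$ coming from Lemma~\ref{converse}.
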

\begin{proof}
Denote the events:
\begin{equation}\label{eventH}
\mathcal{\overline H}:=\left\{\max\limits_k\max\limits_{t\in[\tau_k,\tau_{k+1}]}\nel\overline X_t-\overline X_{\tau_k}\ner_\infty\leq C_BN^{-\frac{\gamma-1}{3}}\right\},
\end{equation}
and
\begin{equation}
\mC_{\tau_k}:=\left\{\nel K^N(\overline X_{\tau_k})-\overline K^N(\overline X_{\tau_k})\ner_\infty\leq C_{1,\alpha} N^{2\delta-1}\log (N)\right\},
\end{equation}
where $C_B$ and $C_{1,\alpha} $ are used in Lemma \ref{lmlarge} and Lemma \ref{lmQ} respectively.
According to   Lemma \ref{lmlarge} and Lemma \ref{lmQ}, one has
\begin{equation}\label{CHevent}
\PP(\mC_{\tau_k}^c)\leq N^{-\alpha},\quad \PP(\mathcal{\overline H}^c)\leq C_BN^{\frac{\gamma-1}{3}}\exp(-C_BN^{\frac{2}{3}}),
\end{equation}
for any $\alpha>0$ and $\gamma>4$.

	Furthermore,  we denote
	\begin{equation}\label{Btn}
	\B_{\tau_k}:=\left\{\nel L^N(\overline X_{\tau_k})-\overline{L}^N(\overline X_{\tau_k})\ner_\infty\leq C_{1,\alpha} N^{3\delta-1}\log (N)\right\},
	\end{equation}
	then one has 
\begin{equation}\label{Bevent}
	\PP(\B_{\tau_k}^c)\leq N^{-\alpha},
\end{equation}
	by Lemma \ref{lmlarge}.
	Also, under the event $\B_{\tau_k}$, it holds that
	\begin{equation}\label{Btnresult}
	\|L^N(\overline X_{\tau_k})\|_\infty\leq \|\overline L^N(\overline X_{\tau_k})\|_\infty+C_{1,\alpha}N^{3\delta-1}\log (N)\leq C(\alpha, T, C_{f_0})N^{3\delta-1}\log (N),
	\end{equation}
where we have used $\|\overline L^N(\overline X_{\tau_k})\|_\infty\leq C\log (N)$ from Lemma \ref{converse}.

For all $t\in[\tau_k,\tau_{k+1}]$, under the event $\B_{\tau_k}\cap\mC_{\tau_k}\cap \mathcal{\overline H}$, we obtain
\begin{align}
&\nel K^N(\overline X_t)-\overline K^N(\overline X_t)\ner_\infty\notag\\
\leq &\nel K^N(\overline X_t)-K^N(\overline X_{\tau_k})\ner_\infty+\nel K^N(\overline X_{\tau_k})-\overline K^N(\overline X_{\tau_k})\ner_\infty+\nel \overline K^N(\overline X_{\tau_k})-\overline K^N(\overline X_t)\ner_\infty\notag\\
\leq& C\|L^N(\overline X_{\tau_k})\|_\infty\|\overline X_t-\overline X_{\tau_k}\|_\infty+C_{1,\alpha} N^{2\delta-1}\log (N)+C\nel\overline X_{t}-\overline X_{\tau_k}\ner_\infty+CN^{-\frac{\gamma}{3}}\notag\\
\leq&C(\alpha, T, C_{f_0}) N^{3\delta-1}\log (N)N^{-\frac{\gamma-1}{3}}+C_{1,\alpha} N^{2\delta-1}\log (N)\notag\\
\leq&C(\alpha, T, C_{f_0}) N^{2\delta-1}\log (N),
\end{align}
due to the fact that $3\delta+1<4<\gamma$. In the second inequality we have used the local Lipschitz bound of $K^N$
\begin{equation}
\nel K^N(X_t)-K^N(\overline X_{\tau_k})\ner_\infty\leq C\|L^N(\overline X_{\tau_k})\|_\infty\|X_t-\overline X_{\tau_k}\|_\infty,
\end{equation}
 under the event $\mathcal{\overline H}$ (see in Lemma \ref{lmlip}).   To bound the third term $\nel \overline K^N(\overline X_{\tau_k})-\overline K^N(\overline X_t)\ner_\infty$, we used the uniform control of $\max\limits_{\tau_k\leq t \leq \tau_{k+1}}\norm{\partial_t\rho^N}_{L^\infty(\RR^3)}$ in \eqref{partialtrho}. Indeed,  
 \begin{align}
&\| k^N\ast \rho_{t}(X_{t})- k^N\ast \rho_{\tau_k}(X_{\tau_k})\|_\infty\notag\\
\leq& \| k^N\ast \rho_{t}(X_{t})- k^N\ast \rho_{t}(X_{\tau_k})\|_\infty+\| k^N\ast \rho_{t}(X_{\tau_k})- k^N\ast \rho_{\tau_k}(X_{\tau_k})\|_\infty\notag\\
\leq &C\nel\overline X_{t}-\overline X_{\tau_k}\ner_\infty+C\Delta \tau\leq C\nel\overline X_{t}-\overline X_{\tau_k}\ner_\infty+CN^{-\frac{\gamma}{3}}.
 \end{align}
 In the third inequality we have used \eqref{eventH} and \eqref{Btnresult}.
This yields that
\begin{equation}
\max\limits_{t\in[0,T]}\nel K^N(\overline X_t)-\overline K^N(\overline X_t)\ner_\infty\leq C(\alpha, T, C_{f_0})  N^{2\delta-1}\log (N),
\end{equation}
holds under the event $\bigcap\limits_{k=0}^{M}\B_{\tau_k}\cap\mC_{\tau_k}\cap\mathcal{\overline H}$. Therefore it follows from \eqref{CHevent} and \eqref{Bevent} that
\begin{align}\label{68}
		&\PP\left(\max\limits_{t\in[0,T]}\nel K^N(\overline X_t)-\overline K^N(\overline X_t)\ner_\infty\geq C(\alpha, T, C_{f_0})N^{2\delta-1}\log (N)\right)\notag \\
		\leq &\sum\limits_{k=0}^MP(\B_{\tau_k}^c)+\sum\limits_{k=0}^MP(\mC_{\tau_k}^c)+P(\mathcal{\overline H}^c)\notag \\
		\leq &TN^{-\frac{3\alpha-\gamma}{3}}+TN^{-\frac{3\alpha-\gamma}{3}}+C_BN^{\frac{\gamma-1}{3}}\exp(-C_BN^{\frac{2}{3}})
		\leq N^{-\alpha'}.
	\end{align}
Denote $C_{2,\alpha'}$ to be the constant $C(\alpha, T, C_{f_0})$ in \eqref{68}. Since $\alpha>0$ is arbitrary and so is $\alpha'$, \eqref{consistency} holds true. The proof of \eqref{consistency1} can be done similarly.
\end{proof}

\subsection{Stability}
In this subsection we obtain  the stability result. 
\begin{defn}\label{defA}
	Let $\A_T$ be the event given by
	\begin{equation}\label{eventA}
	\A_T:=\left\{\max\limits_{t\in[0,T]} \sqrt{\log(N)}\nel X_t-\overline X_t\ner_\infty+\nel V_t-\overline V_t\ner_\infty\leq N^{-\lambda_2}\right\}.
	\end{equation}
\end{defn}	
\begin{proposition}\label{propstab}
	(Stability)   For any $T>0$, assume that the trajectories $\Phi_t=(X_t,V_t)$, $\Psi_t=(\overline X_t,\overline V_t)$ satisfy \eqref{eq:regpar} and \eqref{eq:mean} respectively with the initial data $\Phi_0=\Psi_0$ which is i.i.d. sharing the common density $f_0$ satisfying Assumption \ref{assum}. Let $K^N$ be introduced in \eqref{FN}. For any $0<\lambda_2<\frac{1}{3}$, $0<\lambda_1<\frac{\lambda_2}{3}$ and $\frac{1}{3}\leq \delta <1$, we denote the event:
	\begin{align}
	\mathcal{S}_T(\Lambda):=\bigg\{&\| K^N(X_t)-K^N(\overline X_t) \|_\infty \leq \Lambda\log(N)\nel X_t-\overline X_t\ner_\infty\notag\\
	&+\Lambda\log^2(N)(N^{6\delta-1-\lambda_1-4\lambda_2}+N^{3\lambda_1-2\lambda_2}+N^{2\delta-1}),~\forall~t\in[0,T]\bigg\}.
	\end{align}
 	Then for any $\alpha>0$, there exists some $C_{3,\alpha}>0$ and a $N_0\in\mathbb{N}$  depending only on  $\alpha$, $T$ and $C_{f_0}$  such that
		\begin{align}
	&\mathbb{P}\left(\A_T \cap \mathcal{S}_T^c(C_{3,\alpha})\right)\leq N^{-\alpha},
	\end{align}
	for all $N\geq N_0$.
	
\end{proposition}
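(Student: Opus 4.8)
\textbf{Proof plan for Proposition \ref{propstab} (Stability).}
The plan is to work on the event $\A_T$, where by definition $\norm{X_t-\overline X_t}_\infty \leq N^{-\lambda_2}/\sqrt{\log N}$ and $\norm{V_t - \overline V_t}_\infty \leq N^{-\lambda_2}$ for all $t\in[0,T]$; in particular $\norm{X_t-\overline X_t}_\infty \leq 2N^{-\delta}$ eventually in $N$ since $\lambda_2<\delta$, so Lemma \ref{lmlip} applies. First I would split the regularized kernel as $k^N = k_1^N + k_2^N$, where $k_2^N$ carries a \emph{wider} cut-off of order $N^{-\lambda_2}$ (so $k_2^N$ is $N^{-\lambda_2}$-regularized and hence has Lipschitz constant of order $N^{3\lambda_2}$ on the relevant scale), and $k_1^N = k^N - k_2^N$ is supported in the annulus $\{N^{-\delta}\lesssim |x| \lesssim N^{-\lambda_2}\}$ together with the inner ball. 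Correspondingly $K^N = K_1^N + K_2^N$. For the less singular part, a direct computation as in \cite{boers2016mean,HH1,HH2,lazarovici2015mean} using Lemma \ref{lmlip} with the cut-off scale $N^{-\lambda_2}$ in place of $N^{-\delta}$, together with the consistency bound \eqref{largel} controlling $\norm{L^N(\overline X_t)}_\infty$ by $C\log(N)$ via Lemma \ref{converse}, gives
\begin{equation*}
\norm{K_2^N(X_t)-K_2^N(\overline X_t)}_\infty \leq C\log(N)\norm{X_t-\overline X_t}_\infty
\end{equation*}
on an event of overwhelming probability. The bulk of the work is then the singular part $K_1^N$.

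For $K_1^N$ I would proceed in two sub-steps, following the strategy outlined in the introduction. First, bound the number of particles that can possibly lie in the (small) support of $k_1^N$ around any given particle: since $\mathrm{supp}\,k_1^N$ has volume $\sim N^{-3\lambda_2}$ and the mean-field density $\rho^N$ is uniformly bounded in $L^\infty$, the expected number of $\overline x_j^t$ in a ball of radius $\sim N^{-\lambda_2}$ around $\overline x_i^t$ is $\sim N^{1-3\lambda_2}$; a Law-of-Large-Numbers / concentration argument (Lemma \ref{central}) upgrades this to a high-probability bound of order $N^{1-3\lambda_2}\log(N)$ simultaneously for all $i$, and since $\norm{X_t-\overline X_t}_\infty$ is tiny the same bound holds for the real particles $x_j^t$. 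This already yields a crude estimate $\norm{K_1^N(X_t)-K_1^N(\overline X_t)}_\infty \lesssim \frac{1}{N} \cdot N^{1-3\lambda_2}\log(N) \cdot N^{2\delta}$ coming from at most $\sim N^{1-3\lambda_2}\log(N)$ particles, each contributing a kernel difference of size at most $N^{2\delta}$ times the displacement; but this is too weak. The key second sub-step, which I expect to be the main obstacle, is to replace the leading order of $K_1^N$ by its expectation value: one freezes the configuration at a mesh time $\tau_k$, uses the independence and the mixing of the Brownian increments over a short interval $\Delta t$ (which spreads each particle over a scale $\gg N^{-\lambda_2}$, much larger than the range of $k_1^N$) to argue via a Law of Large Numbers that $\frac{1}{N-1}\sum_{j\neq i} k_1^N(x_i^t - x_j^t)$ is, up to lower-order fluctuations, equal to $\int k_1^N(x_i^t - y)\rho^N(y,t)\,dy$, and the latter is small (of order $N^{-\lambda_2}$ or better) because $k_1^N$ is odd-ish and $\rho^N$ is Lipschitz, so the integral over the small symmetric support nearly cancels. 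This is precisely the content invoked as Lemma \ref{tildaXficedtime}, proved in Section \ref{prolem}; I would cite it to obtain
\begin{equation*}
\norm{K_1^N(X_t)-K_1^N(\overline X_t)}_\infty \preceq C\log^2(N)\,(N^{6\delta-1-\lambda_1-4\lambda_2}+N^{3\lambda_1-2\lambda_2}+N^{2\delta-1}),
\end{equation*}
where the three exponents arise respectively from the fluctuation term in the LLN over the Brownian mixing (balancing the number of particles $N^{1-3\lambda_2}$, the kernel size $N^{2\delta}$, and the mixing scale governed by $\lambda_1$), from the bias introduced by the short-time mesh displacement, and from the residual consistency error.

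Finally I would assemble the pieces. Adding the $K_1^N$ and $K_2^N$ estimates gives exactly the bound defining the event $\mathcal S_T(\Lambda)$ with $\Lambda = C_{3,\alpha}$ chosen as the maximum of the constants produced above. The probability estimate comes from a union bound over the $M+1 \sim N^{\gamma/3}$ mesh times $\tau_k$ of the finitely many bad events (failure of the particle-counting bound, failure of the LLN-over-Brownian-mixing estimate, failure of Lemma \ref{lmlarge}, and the short-time travelling-distance estimate of Lemma \ref{lmQ}), each of which has probability $\leq N^{-\alpha'}$ for $\alpha'$ as large as we wish; choosing $\alpha'$ large enough relative to $\gamma$ and $\alpha$ absorbs the polynomial factor $N^{\gamma/3}$ and yields $\PP(\A_T \cap \mathcal S_T^c(C_{3,\alpha})) \leq N^{-\alpha}$ for all $N \geq N_0$. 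The interpolation between mesh times uses that on $\A_T$ the trajectories move by at most $O(N^{-(\gamma-1)/3})$ over one mesh interval, which is negligible against every term on the right-hand side since $\gamma>4$; the condition $\lambda_1 < \lambda_2/3$ and the constraint on $\delta$ are what make all three exponents in the $\mathcal S_T$ bound strictly negative and in fact $\leq -\lambda_3$ for a suitable $\lambda_3 \in (\lambda_2,\,1-2\delta]$, which is what the subsequent Gronwall argument requires.
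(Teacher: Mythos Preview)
Your overall architecture (split $k^N=k_1^N+k_2^N$, handle $K_2^N$ by the local Lipschitz bound plus control of $L_2^N$, and reserve the Brownian-mixing/LLN argument for $K_1^N$) matches the paper, but there is a genuine gap in the $K_1^N$ part.

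The paper does \emph{not} apply the LLN directly to $K_1^N(X_t)-K_1^N(\overline X_t)$. The interacting trajectories $x_j^t$ are not independent, even conditionally on the configuration at an earlier time, because the pair force couples them; hence Lemma \ref{tildaXficedtime} (which you cite) is not applicable to them. What the paper does instead is insert an \emph{auxiliary mean-field trajectory} $\widetilde X_t$ that solves \eqref{tilde} with initial phase $(X_{t_{n-1}},V_{t_{n-1}})$ at the previous \emph{coarse} mesh time $t_{n-1}$ (mesh size $\Delta t=N^{-\lambda_1}$, not $\Delta\tau=N^{-\gamma/3}$). This yields the three-term splitting
\[
\|K^N(X_t)-K^N(\overline X_t)\|_\infty\le \underbrace{\|K_2^N(X_t)-K_2^N(\overline X_t)\|_\infty}_{\mathcal I_1}+\underbrace{\|K_1^N(X_t)-K_1^N(\widetilde X_t)\|_\infty}_{\mathcal I_2}+\underbrace{\|K_1^N(\widetilde X_t)-K_1^N(\overline X_t)\|_\infty}_{\mathcal I_3}.
\]
The term $\mathcal I_2$ is handled \emph{deterministically} on $\A_T$: since $\widetilde X$ and $X$ share the same Brownian motion and the same starting point at $t_{n-1}$, one gets $\|X_t-\widetilde X_t\|_\infty\le C\Delta t\cdot N^{-\lambda_2}=CN^{-\lambda_1-\lambda_2}$; combining this with Lemma \ref{lmmid} and the bound $\|L^N(\overline X_t)\|_\infty\le CN^{3\delta-1}\log N$ (event $\B_3$) gives $\mathcal I_2\le CN^{6\delta-1-\lambda_1-4\lambda_2}\log N$. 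This is the true origin of the first exponent in $\mathcal S_T$, not a ``fluctuation term in the LLN'' as you suggest. Only $\mathcal I_3$ uses the LLN/Brownian-mixing machinery, and it can do so precisely because both $\widetilde X$ and $\overline X$ are mean-field flows with independent components and controlled transition densities (Lemma \ref{transition}); that is where $N^{2\delta-1}$ and $N^{3\lambda_1-2\lambda_2}$ come from via Lemma \ref{tildaX}.

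A second, related issue is your time discretization. The mixing argument needs the Brownian motion to act over the \emph{coarse} interval $\Delta t=N^{-\lambda_1}$ (so that the spread $\sim(\Delta t)^{3/2}$ dominates the range $N^{-\lambda_2}$ of $k_1^N$; this is why $\lambda_1<\lambda_2/3$). The fine mesh $\Delta\tau=N^{-\gamma/3}$ is used only afterwards, inside the proof of Lemma \ref{tildaX}, to pass from fixed-time bounds to a supremum over $[t_n,t_{n+1}]$. Your plan conflates the two scales, and in particular a union bound directly over the $N^{\gamma/3}$ fine mesh points of the Brownian-mixing estimate would not produce the stated exponents.
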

\begin{rmk}
	This proposition is one of the crucial statements in our paper. Proving propagation of chaos for systems like the one we consider under the assumptions of Lipschitz-continuous forces is standard, as explained in the introduction. The forces we consider are more singular. However our techniques allow us to show that the Lipschitz condition encoded in the definition of $\mathcal{S}$ holds typically, i.e. with probability close to one. In this sense, Proposition \ref{propstab} is  only helpful if we find an argument that $\A_T$ holds. 
	But as long as  we have good estimates on the difference of the forces and thus the growth of $\max\limits_{t\in[0,T]} \sqrt{\log(N)}\nel X_t-\overline X_t\ner_\infty+\nel V_t-\overline V_t\ner_\infty$, we are in fact able to control $\A_T$. This control is done by a generalization of  Gronwalls Lemma, which will be introduced in our next step (Lemma \ref{lmprior}).
\end{rmk}

\begin{proof} Let $\alpha>0$.
	First, we write $\mathcal{S}_T(\Lambda)$ as the intersection of non-overlapping sets $\{\mathcal{S}_n(\Lambda)\}_{n=0}^{M'}$, where
	\begin{align}
	\mathcal{S}_n(\Lambda):=\bigg\{&\| K^N(X_t)-K^N(\overline X_t) \|_\infty \leq \Lambda\log(N)\nel X_t-\overline X_t\ner_\infty\notag\\
	&+\Lambda\log^2(N)(N^{6\delta-1-\lambda_1-4\lambda_2}+N^{3\lambda_1-2\lambda_2}+N^{2\delta-1}),~\forall~t\in[t_n,t_{n+1}]\,0\leq n\leq M'\bigg\},
	\end{align}
	with $\Delta t:=t_{n+1}-t_n=N^{-\lambda_1}$, then $\mathcal{S}_T(\Lambda)=\bigcap\limits_{n=0}^{M'}\mathcal{S}_n(\Lambda)$. Note that here the choice of $\Delta t$ is for the purpose of proving stability and it is different from $\Delta \tau$ in the proof of consistency.
	
	To prove this proposition, we split the interaction force $k^N$ into $k^N=k_1^N+k_2^N$, where $k_2^N$ is the result of choosing a wider cut-off of order $N^{-\lambda_2}>N^{-\delta}$ in the force kernel $k$ and 
	\begin{equation}\label{k1}
	k_1^N:=k^N-k_2^N,\quad k_2^N=k\ast\psi_{\lambda_2}^N,
	\end{equation}
	which means that for $k_2^N$ and $\ell_2^N$ we choose $\delta=\lambda_2$ in \eqref{eq:force}  and   
	\eqref{lN}   respectively.  
	
	 Following the approach in \cite{garcia2017}, we introduce the  following auxiliary trajectory
	\begin{equation}\label{tilde}
	\left\{
	\begin{aligned}
	&d\widetilde x_i^t= \widetilde v_i^tdt,\\
	&d \widetilde v_i^t=\int_{\RR^3} k^N(\widetilde x_i^t-x)\rho^N(x,t)dxdt + \sqrt{2\sigma}dB_i^t\;.
	\end{aligned}
	\right.
	\end{equation}
	We consider the above auxiliary trajectory with two different initial phases. For any $1\leq n\leq M'$ and $t\in[t_n,t_{n+1}]$, we consider the auxiliary trajectory
	starting from the initial phase
	\begin{equation}\label{tilde1}
	(\widetilde x_i^{t_{n-1}}, \widetilde v_i^{t_{n-1}})=(x_i^{t_{n-1}}, v_i^{t_{n-1}}),
	\end{equation}
	where $(x_i^{t_{n-1}}, v_i^{t_{n-1}})$ satisfies \eqref{eq:regpar} at time $t_{n-1}$. However when $n=1$, i.e.  $t\in[0,t_1]$, the initial phase  of the auxiliary trajectory is chosen to be $(\widetilde x_i^{0}, \widetilde v_i^{0})=(x_i^{0}, v_i^{0})$, which has the distribution $f_0$. Moreover in the latter case the distribution of $(\widetilde x_i^{t}, \widetilde v_i^{t})$ is exactly $f_t^N$, which solves the regularized VPFP  equations \eqref{vlasov} with the initial data $f_0$.
	
	For later reference let us estimate the difference $\|\overline X_t -\widetilde X_t\|_\infty$ and $\|\overline V_t -\widetilde V_t\|_\infty$. Using the equations of these trajectories, we have for $t\in[t_n,t_{n+1}],$
	\begin{equation}
	\frac{d}{dt}\|\overline X_t -\widetilde X_t\|_\infty=\|\overline V_t -\widetilde V_t\|_\infty,
	\end{equation}
	and
	\begin{align}
	\frac{d}{dt}\|\overline V_t -\widetilde V_t\|_\infty=&\|\overline K^N(\overline X_t) -\overline K^N(\widetilde X_t)\|_\infty\notag
	\\\leq& \max_{1\leq j\leq N}|k^N \ast \rho^N(\cdot,t) (\overline x_j)-k^N\ast \rho^N(\cdot,t) (\widetilde x_j)|\notag
	\\\leq& \max_{1\leq j\leq N}|\overline x_j-\widetilde x_j|\|\nabla k^N\ast\rho^N(\cdot,t)\|_\infty\notag
	\\\leq&C(\norm{\nabla \rho^N}_1+\norm{\nabla \rho^N}_\infty)\|\overline X_t -\widetilde X_t\|_\infty \notag
	\\\leq& C \|\overline X_t -\widetilde X_t\|_\infty\;,
	\end{align}
	where $C$ depends only on $T$ and $C_{f_0}$.
	Summarizing, we get
	$$\frac{d}{dt}\left(\|\overline X_t -\widetilde X_t\|_\infty+\|\overline V_t -\widetilde V_t\|_\infty\right)\leq  C\left(\|\overline X_t -\widetilde X_t\|_\infty+\|\overline V_t -\widetilde V_t\|_\infty\right)$$
	Using Gronwall's inequality  it follows that 
	\begin{align}\label{overlineminustilde}
	\max_{t_n \leq t \leq t_{n+1}}\left(\|\overline X_t -\widetilde X_t\|_\infty+\|\overline V_t -\widetilde V_t\|_\infty\right)&\leq \exp(C\Delta t)(\|\overline X_{t_n} - X_{t_n}\|_\infty+\|\overline V_{t_n} - V_{t_n}\|_\infty)\notag\\
	&\leq \exp(CN^{-\lambda_1})N^{-\lambda_2}\leq CN^{-\lambda_2},
	\end{align}
	under the event $\mathcal{A}_T$ defined in \eqref{eventA}.

	Then for any $t\in[t_n,t_{n+1}]$, one splits the error
	\begin{align}
	&\| K^N(X_t)-K^N(\overline X_t) \|_\infty \notag \\
	\leq &\norm{K_2^N(X_t)-K_2^N(\overline X_t) }_\infty+\norm{K_1^N(X_t)-K_1^N(\widetilde X_t) }_\infty+\norm{K_1^N(\widetilde X_t)-K_1^N(\overline X_t) }_\infty \notag \\
	=:&\mathcal{I}_1+\mathcal{I}_2+\mathcal{I}_3.
	\end{align}

	First, let us compute $\mathcal{I}_1$:
	\begin{align}
	\| K_2^N(X_t)-K_2^N(\overline X_t) \|_\infty \leq C\|L_2^N(\overline X_{t})\|_\infty\nel X_t-\overline X_t\ner_\infty,
	\end{align}
	where we have used the local Lipschitz bound of $K_2^N$ under the event $\A_T$ (see in Lemma \ref{lmlip}).
	Furthermore, we denote
	\begin{equation}\label{eventB2}
	\B_2:=\left\{\max\limits_{t\in[0,T]}\nel L_2^N(\overline X_{t})-\overline L_2^N(\overline X_{t})\ner_\infty\leq C_{2,\alpha}N^{3\lambda_2-1}\log(N)\right\}.
	\end{equation}
	 Since Proposition \ref{propconsis} also holds for the case $\lambda_2<\frac{1}{3}$, one has
	\begin{equation}\label{estB2}
	\PP(\B_2^c)\leq N^{-\alpha}.
	\end{equation} 
	
	Under the event $\B_2$, it holds that
	\begin{equation}\label{useb2}
	\|L_2^N(\overline X_{t})\|_\infty\leq \|\overline L_2^N(\overline X_{t})\|_\infty+C_{2,\alpha}N^{3\lambda_2-1}\log(N)\leq C\log(N),
	\end{equation}
	since $\lambda_2<\frac{1}{3}$, 
	where $\|\overline L_2^N(\overline X_{t})\|_\infty\leq C\log(N)$ follows from Lemma \ref{converse}.
	Hence, one has
	\begin{equation}\label{I1'}
	\mathcal{I}_1\leq\| K_2^N(X_t)-K_2^N(\overline X_t) \|_\infty \leq C\log(N)\nel X_t-\overline X_t\ner_\infty,\quad \forall~t\in[t_n,t_{n+1}],
	\end{equation}
	under event $\A_T\cap\B_2$.

	To estimate $\mathcal{I}_2$, notice that by triangle inequality and \eqref{overlineminustilde} one has
	\begin{align}
	\norm{X_t-\widetilde{X}_t}_\infty\leq&\int_{t_n}^{t}\norm{V_s-\widetilde V_s}_\infty ds\leq
	\int_{t_n}^{t}\norm{V_s-\overline V_s}_\infty +\norm{\overline V_s- \widetilde V_s}_\infty ds
	\\\leq& \Delta t \max\limits_{s\in[t_n,t]}\left( \norm{V_s-\overline V_s}_\infty+\norm{\overline V_s- \widetilde V_s}_\infty \right)
	\\\leq& C N^{-\lambda_1-\lambda_2},
	\end{align}
	 under the event $\A_T$, which leads to
	\begin{align}
	&\norm{K_1^N(X_t)-K_1^N(\widetilde X_t) }_\infty\leq (\norm{\nabla K_1^N(X_t)}_\infty+\norm{\nabla K_1^N(\widetilde X_t)}_\infty)\norm{X_t-\widetilde{X}_t}_\infty \notag \\
	\leq& CN^{3(\delta-\lambda_2)}\norm{L^N(\overline X_t) }_\infty\norm{X_t-\widetilde{X}_t}_\infty
	\leq CN^{3\delta-\lambda_1-4\lambda_2} \norm{L^N(\overline X_t) }_\infty.
	\end{align}
	Here the bound  $\frac{\norm{\nabla K_1^N(X_t)}_\infty}{\norm{ L^N(\overline X_t)}_\infty}\leq CN^{3(\delta-\lambda_2)}$ uses Lemma \ref{lmmid} since  
	\begin{equation}
	\left\| X_t-\overline{X}_t\right\|_\infty\leq N^{-\lambda_2}\gg N^{-\delta}.
	\end{equation} 
	And a similar estimate leads to $\frac{\norm{\nabla K_1^N(\widetilde X_t)}_\infty}{\norm{ L^N(\overline X_t)}_\infty}\leq CN^{3(\delta-\lambda_2)}$.
	
	We denote the event
	\begin{equation}\label{eventB3}
	\B_3:=\left\{\max\limits_{t\in[0,T]}\nel  L^N(\overline X_{t})-\overline L^N(\overline X_{t})\ner_\infty\leq C_{2,\alpha}N^{3\delta-1}\log(N)\right\}.
	\end{equation}
	It has been proved in Proposition \ref{propconsis} that 
	\begin{equation}\label{estB3}
	\PP(\B_3^c)\leq N^{-\alpha}.
	\end{equation}
	Then under the event $\B_3$ it follows that
	\begin{equation}
	\|L^N(\overline X_{t})\|_\infty\leq \|\overline L^N(\overline X_{t})\|_\infty+C_{2,\alpha}N^{3\delta-1}\log(N) \leq CN^{3\delta-1}\log(N),
	\end{equation}
	since $\|\overline L^N(\overline X_{t})\|_\infty\leq C\log(N)$ and $\frac{1}{3}\leq \delta<1$. Thus, we have
	\begin{equation}\label{I2'}
	\mathcal{I}_2=\norm{K_1^N(X_t)-K_1^N(\widetilde X_t) }_\infty\leq CN^{6\delta-1-\lambda_1-4\lambda_2}\log(N),\quad \forall~t\in[t_n,t_{n+1}],
	\end{equation}
	under the event $\A_T\cap\B_3$.

	The estimate of $\mathcal{I}_3$	is a result of Lemma \ref{tildaX}. Indeed, we
	denote the event 
	\begin{align}\label{eventG}
	\mathcal{G}_n:=\bigg\{&\max\limits_{t\in[t_n,t_{n+1}]}\left\|  K_1^{N}(\widetilde X_t)-  K_1^{N}(\overline X_t)\right\|_\infty\leq C_{4,\alpha}N^{2\delta-1} \log(N)\notag \\
	&+C_{4,\alpha}\log^2(N)N^{3\lambda_1-\lambda_2}\|k_1^N\|_1\bigg\},
	\end{align}
	so by Lemma \ref{tildaX} one has  that
	for any $0\leq n \leq M'$
	\begin{equation}
	\label{estG}\mathbb{P}\left(\mathcal{A}_T\cap\mathcal{G}_n^c\right)\leq N^{-\alpha}.
	\end{equation}
	Furthermore, it holds  that 
	\begin{align}\label{I3'}
	\mathcal{I}_3=&\norm{ K_1^{N}(\widetilde X_t)-  K_1^{N}(\overline X_t)}_\infty
	\leq C_{4,\alpha}N^{2\delta-1}\log(N)+C_{4,\alpha}\log^2(N)N^{3\lambda_1-2\lambda_2}\notag \\
	\leq&C(\alpha,T,C_{f_0})\log^2(N)(N^{3\lambda_1-2\lambda_2}+N^{2\delta-1}), \quad \forall~t\in[t_n,t_{n+1}],
	\end{align}	
	under the event $\mathcal{G}_n$, where we have used the fact that $\norm{k_1^N}_1\leq CN^{-\lambda_2}$. 
	Indeed, it is easy to compute that
	\begin{equation}\label{k1est}
	\norm{k_1^N}_1=\norm{k^N-k_2^N}_1\leq C\int_{0\leq |x|\leq N^{-\lambda_2}}\frac{1}{|x|^2}dx\leq CN^{-\lambda_2}.
	\end{equation}
	
	Collecting \eqref{I1'},  \eqref{I2'} and \eqref{I3'} yields that 
	\begin{align*}
	&\| K^N(X_t)-K^N(\overline X_t) \|_\infty \notag\\
	\leq &C\log(N)\nel X_t-\overline X_t\ner_\infty+CN^{6\delta-1-\lambda_1-4\lambda_2}\log(N)+C\log^2(N)(N^{3\lambda_1-2\lambda_2}+N^{2\delta-1})\notag\\
	\leq&C\log(N)\nel X_t-\overline X_t\ner_\infty+C\log^2(N)(N^{6\delta-1-\lambda_1-4\lambda_2}+N^{3\lambda_1-2\lambda_2}+N^{2\delta-1}),~\forall t\in[t_n,t_{n+1}],
	\end{align*}
	under the event $\B_2\cap\B_3\cap\A_T\cap \mathcal{G}_n$, where $C$ depends on $\alpha$, $T$ and $C_{f_0}$.
	To distinguish it from other constants we will denote this $C$ by $C_{3,\alpha}$. This implies
	$\B_2\cap\B_3\cap\A_T\cap \mathcal{G}_n\subseteq \mathcal S_n(C_{3,\alpha})$, which yields that
	\begin{equation}
	\B_2\cap\B_3\cap\A_T\cap(\bigcap_{n=0}^{M'}\mathcal{G}_{n})\subseteq \left( \mathcal \bigcap_{n=0}^{M'}S_n(C_{3,\alpha})\right)=S_T(C_{3,\alpha}).
	\end{equation}
	It follows that $$\mathbb{P}\left(\A_T\cap S_T^c(C_{3,\alpha})\right)\leq \mathbb{P}\left(\B_2^c\right)+\mathbb{P}\left(\B_3^c\right)+\sum_{n=0}^{M'}\mathbb{P}\left(\A_T\cap\mathcal{G}_{n}^c\right)\leq (M'+3) N^{-\alpha}\leq 2TN^{\lambda_1-\alpha}\leq N^{-\alpha'},$$
	where we used the estimates in \eqref{estB2}, \eqref{estB3} and\eqref{estG}. Here $\alpha$ is arbitrary and so is $\alpha'$.

\end{proof}

\begin{lem}\label{tildaX} 
	Assume that the event $\A_T$ holds. Consider two trajectories $(\widetilde X_t,\widetilde V_t)$, $(\overline X_t,\overline V_t)$ on $t\in[t_n,t_{n+1}]$ satisfying \eqref{tilde}-\eqref{tilde1} and \eqref{eq:mean} respectively. When $1\leq n\leq M'$, the two different initial phases are  chosen to be $(X_{t_{n-1}},V_{t_{n-1}})$ and $(\overline X_{t_{n-1}},\overline X_{t_{n-1}})$ at time $t=t_{n-1}$, and when $n=0$ the two different initial phases are  chosen to be $(X_{0},V_{0})$ and $(\overline X_{0},\overline V_{0})$ at time $t=0$.  Then  for any $\alpha>0$, there exists a $C_{4,\alpha}>0$  depending only on $\alpha$, $T$ and $C_{f_0}$ such that for $N$ sufficiently large it holds that
	\begin{align}\label{tildaXeq}
	\PP\bigg(&\max\limits_{t\in[t_n,t_{n+1}]} \left\| K_1^{N}(\widetilde X_t)-  K_1^{N}(\overline X_t)\right\|_\infty\geq C_{4,\alpha} N^{2\delta-1}\log(N) \notag \\
	&+C_{4,\alpha} \log^2(N) N^{3\lambda_1-\lambda_2}\|k_1^N\|_1\bigg)\leq N^{-\alpha},
	\end{align}
	where we require $t_{n+1}-t_n=N^{-\lambda_1}$ with  $0<\lambda_1<\frac{\lambda_2}{3}$ and $0<\lambda_2<\frac{1}{3}$. Here $$(K_1^N(\widetilde X_t))_i=\frac{1}{N-1}\sum\limits_{j\neq i}^Nk_1^N(\widetilde x_i^t-\widetilde x_j^t)\,,\quad (K_1^N(\overline X_t))_i=\frac{1}{N-1}\sum\limits_{j\neq i}^Nk_1^N(\overline x_i^t-\overline x_j^t),\quad t\in[t_n,t_{n+1}]\,,$$  
	where $k_1^N$ is defined in \eqref{k1}.
\end{lem}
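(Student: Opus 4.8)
The plan is to split, at each fixed time $t\in[t_n,t_{n+1}]$,
\[
K_1^N(\widetilde X_t)-K_1^N(\overline X_t)=\Big(K_1^N(\widetilde X_t)-\overline{K_1^N}(\widetilde X_t)\Big)+\Big(\overline{K_1^N}(\widetilde X_t)-\overline{K_1^N}(\overline X_t)\Big)+\Big(\overline{K_1^N}(\overline X_t)-K_1^N(\overline X_t)\Big),
\]
where $(\overline{K_1^N}(X))_i:=\int_{\RR^3}k_1^N(x_i-x)\rho^N(x,t)\,dx$. The first and last terms are Laws of Large Numbers comparing an empirical average of the short-ranged kernel $k_1^N$ to its mean, and the middle term is a Lipschitz comparison of the smooth field $k_1^N\ast\rho^N(\cdot,t)$ along the two nearby trajectories. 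The point of working with $k_1^N$ rather than $k^N$ is that $\mbox{supp }k_1^N\subseteq B(0,N^{-\lambda_2})$ with $\|k_1^N\|_1\le CN^{-\lambda_2}$ (see \eqref{k1est}), so only few particles lie in its range, while over the span $t-t_{n-1}\in[\Delta t,2\Delta t]$ the Brownian motion has already mixed the auxiliary particles on the much larger scale $(\Delta t)^{3/2}=N^{-3\lambda_1/2}\gg N^{-\lambda_2}$. To pass from this fixed-time analysis to the stated $\max_{t\in[t_n,t_{n+1}]}$ bound I would discretize $[t_n,t_{n+1}]$ into subintervals of length $h=N^{-\theta}$ with $\theta$ large: between grid points $t',t''$, using $\|\nabla k_1^N\|_\infty\le CN^{3\delta}$ (Lemma \ref{lmkenerl}) and the velocity bound $\max_t(\|\widetilde V_t\|_\infty+\|\overline V_t\|_\infty)\le CN^{1/3}$, which holds off an event of probability $\le C\exp(-cN^{2/3})$ by \eqref{Bproperty} and the compact velocity support of $f_0$ exactly as in the proof of Lemma \ref{lmQ}, the oscillation of $k_1^N(\widetilde x_i^\cdot-\widetilde x_j^\cdot)$ is $\le CN^{3\delta+1/3}h$, which is negligible for $\theta$ large; a union bound over the $\le TN^{\theta}$ grid points costs only a harmless polynomial factor.

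\textbf{The two routine terms.}
For the last term the components of $\overline X_t$ are i.i.d.\ with density $\rho^N(\cdot,t)$, so for fixed $i$ one sets $Z_j:=k_1^N(\overline x_i^t-\overline x_j^t)-\int k_1^N(\overline x_i^t-x)\rho^N(x,t)\,dx$ and applies Lemma \ref{central} conditionally on $\overline x_i^t$ exactly as in Lemma \ref{lmlarge}: $\mathbb E'[Z_j]=0$, $\mathbb E'[Z_j^2]\le\|\rho^N\|_\infty\|k_1^N\|_2^2\le CN^{\delta}$ by Lemma \ref{lmkenerl}(iii), and $|Z_j|\le C\|k_1^N\|_\infty\le CN^{2\delta}$. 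Since $\delta\ge\frac13$ one may take $g(N)=CN^{4\delta-1}$ (then $g(N)\ge\mathbb E'[Z_j^2]$ and $|Z_j|\le C\sqrt{Ng(N)}$), which gives $\|K_1^N(\overline X_t)-\overline{K_1^N}(\overline X_t)\|_\infty\preceq N^{2\delta-1}\log N$ after a union bound over $i$. For the middle term, $\|\nabla(k_1^N\ast\rho^N(\cdot,t))\|_\infty\le\|k_1^N\|_1\|\nabla\rho^N(\cdot,t)\|_\infty\le C_{f_0}N^{-\lambda_2}$ and, on $\A_T$, $\|\widetilde X_t-\overline X_t\|_\infty\le CN^{-\lambda_2}$ by \eqref{overlineminustilde}, so this term is $\le CN^{-2\lambda_2}$, which is dominated by the right-hand side of \eqref{tildaXeq}.

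\textbf{The key term via Brownian mixing.}
Fix $i$ and condition on $\mathcal F_{t_{n-1}}$ together with the single Brownian path $(B_i^s)_s$; write $\mathbb E'[\cdot]$ for this conditional expectation. Because the auxiliary system \eqref{tilde} is decoupled across particles, $\widetilde x_i^t$ is determined by this conditioning, while $\{\widetilde x_j^t\}_{j\neq i}$ are conditionally independent, each $\widetilde x_j^t$ being the time-$t$ position of the single-particle kinetic SDE \eqref{tilde} started from the now-fixed point $(x_j^{t_{n-1}},v_j^{t_{n-1}})$ and driven by $B_j$, which is independent of the conditioning. The transition density of this hypoelliptic process obeys a Gaussian-type bound, so the conditional density $g_j^t$ of $\widetilde x_j^t$ satisfies $\|g_j^t\|_\infty\le C(t-t_{n-1})^{-q}\le CN^{q\lambda_1}$ for a fixed power $q$. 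Setting $Z_j:=k_1^N(\widetilde x_i^t-\widetilde x_j^t)-\mathbb E'[k_1^N(\widetilde x_i^t-\widetilde x_j^t)]$, which are conditionally independent, mean zero, with $\mathbb E'[Z_j^2]\le\|g_j^t\|_\infty\|k_1^N\|_2^2\le CN^{q\lambda_1+\delta}$ and $|Z_j|\le CN^{2\delta}$, Lemma \ref{central} applied conditionally controls the fluctuation $\frac1{N-1}\sum_{j\ne i}Z_j\preceq N^{2\delta-1}\log N$ in the relevant parameter range, and it remains to estimate the conditional mean
\[
\frac1{N-1}\sum_{j\ne i}\mathbb E'\big[k_1^N(\widetilde x_i^t-\widetilde x_j^t)\big]=\Big(k_1^N\ast\tfrac1{N-1}\!\sum_{j\ne i}g_j^t\Big)(\widetilde x_i^t).
\]
Here one must not bound each $g_j^t$ by $N^{q\lambda_1}$; instead, since $\mathbb E[g_j^t]=\rho^N(\cdot,t)$ is bounded, a further Law of Large Numbers over the initial positions $x_j^{t_{n-1}}$ (again Lemma \ref{central}, on a fine net of base points) shows that $\big\|\frac1{N-1}\sum_{j\ne i}g_j^t-\rho^N(\cdot,t)\big\|_\infty$ is small with high probability, so the conditional mean is at most $\|k_1^N\|_1(\|\rho^N\|_\infty+o(1))$ up to a fluctuation; handling the few particles that can enter $\mbox{supp }k_1^N$ when the net is not fine enough uses a LLN on the count $\#\{j:|\widetilde x_i^t-\widetilde x_j^t|\le N^{-\lambda_2}\}\lesssim N^{1-3\lambda_2}\log N$. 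Together these give $\|K_1^N(\widetilde X_t)-\overline{K_1^N}(\widetilde X_t)\|_\infty\preceq CN^{2\delta-1}\log N+C\log^2(N)N^{3\lambda_1-\lambda_2}\|k_1^N\|_1$ at the fixed time $t$.

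\textbf{Conclusion and the main difficulty.}
Adding the three fixed-time bounds, taking the maximum over the $\le TN^{\theta}$ grid points (with the oscillation estimate above and a union bound over $i$), and collecting the exceptional probabilities --- those of the $\A_T^c$-type events, of the Brownian tail events, and of the finitely many failures of Lemma \ref{central} --- yields \eqref{tildaXeq} with $\alpha$ arbitrary. The main obstacle is the key term: making rigorous the conditional independence and the hypoelliptic density bound for $\{\widetilde x_j^t\}_{j\neq i}$ given the conditioning, and, above all, organizing the replacement of the empirical average by the integral against $\rho^N$ so that the Brownian-smoothing factor enters only through the sharp exponent $N^{3\lambda_1}$ (rather than the naive $N^{q\lambda_1}$ with $q>3$), all while keeping the hypotheses $\mathbb E'[Z_j^2]\le g(N)$ and $|Z_j|\le C\sqrt{Ng(N)}$ of Lemma \ref{central} valid even though $\|k_1^N\|_\infty$ is as large as $N^{2\delta}$; this balance is precisely what consumes the narrow cut-off window $\frac13\le\delta<\frac{19}{54}$ together with $0<\lambda_1<\frac{\lambda_2}{3}$.
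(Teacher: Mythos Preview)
Your time-discretization to pass from a fixed time to $\max_{t\in[t_n,t_{n+1}]}$, the $\overline X$-fluctuation term, and the Lipschitz ``middle'' term are all fine and close in spirit to what the paper does. The gap is in the key term $K_1^N(\widetilde X_t)-\overline{K_1^N}(\widetilde X_t)$. After conditioning on $\mathcal F_{t_{n-1}}$ and $(B_i^s)_s$, the conditional mean you have to control is $(k_1^N\ast\frac{1}{N-1}\sum_{j\neq i}g_j^t)(\widetilde x_i^t)$, and you propose to replace $\frac{1}{N-1}\sum_{j\neq i}g_j^t$ by $\rho^N(\cdot,t)$ via a second Law of Large Numbers ``over the initial positions $x_j^{t_{n-1}}$''. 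This step fails: the $(x_j^{t_{n-1}},v_j^{t_{n-1}})$ are the positions of the \emph{interacting} system $\Phi_{t_{n-1}}$, hence they are neither independent nor do they have one-particle marginal $f^N(\cdot,\cdot,t_{n-1})$; consequently $\mathbb{E}[g_j^t]\neq\rho^N(\cdot,t)$, and Lemma~\ref{central} does not apply. Without this replacement the naive bound $\mathbb{E}'[k_1^N(\widetilde x_i^t-\widetilde x_j^t)]\le\|k_1^N\|_1\|g_j^t\|_\infty\le CN^{q\lambda_1-\lambda_2}$ summed over all $j$ is far too large, so your decomposition through $\overline{K_1^N}$ cannot be closed.

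The paper avoids the problem by \emph{not} inserting $\overline{K_1^N}$. It bounds $K_1^N(\widetilde X_t)-K_1^N(\overline X_t)$ directly: (i) it restricts the sum to the random set $M_{t_n}^t$ of indices that can possibly enter the support of $k_1^N$, and shows $\mathrm{card}(M_{t_n}^t)\lesssim N^{1-3\lambda_1}\log^2 N$ with high probability (Steps~1--2, via a binomial count against the $\overline X$-particles, which \emph{are} i.i.d.); (ii) on that restricted sum it subtracts and adds the conditional expectations for both $\widetilde X$ and $\overline X$, controlling each fluctuation by a two-tier concentration argument (Lemma~\ref{central2} for small cardinality, Lemma~\ref{central} for larger cardinality); (iii) the difference of the two expectations is handled by the transition-density stability estimate of Lemma~\ref{transition}, $\|u_{t,t_{n-1}}^{a,j,N}-u_{t,t_{n-1}}^{b,j,N}\|_{\infty,1}\le C|a_j-b_j|(t-t_{n-1})^{-6}$, which on $\mathcal A_T$ gives $CN^{6\lambda_1-\lambda_2}\|k_1^N\|_1$ per particle and, after multiplying by $\mathrm{card}(M_{t_n}^t)/N\lesssim N^{-3\lambda_1}\log^2 N$, the sharp $N^{3\lambda_1-\lambda_2}\log^2 N\,\|k_1^N\|_1$. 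In short, the missing ingredient in your argument is precisely Lemma~\ref{transition}: it lets you compare the two auxiliary flows at the level of densities without ever needing a LLN over the (correlated) interacting initial data.
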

Lemma \ref{tildaX} is used in the proof of Proposition \ref{propstab}. It follows from the following estimate of the term in \eqref{tildaXeq}
at any fixed time $t\in[t_n,t_{n+1}]$, a statement  which will later be generalized to hold for the maximum of $\max t\in[t_n,t_{n+1}]$. 
\begin{lem}\label{tildaXficedtime} Under the same assumptions as in Lemma \ref{tildaX}, for any $\alpha>0$,  there exists $C_{5,\alpha}>0$  depending only on $\alpha$, $T$ and $C_{f_0}$ such that for $N$ sufficiently large it holds that  for any fixed time $t\in[t_n,t_{n+1}]$
	\begin{align}\label{tildaXfixedtimeeq}
	\PP\bigg(&\left\|  K_1^{N}(\widetilde X_t)-  K_1^{N}(\overline X_t)\right\|_\infty\geq C_{5,\alpha} N^{2\delta-1}\log(N) \notag \\
	&+C_{5,\alpha} \log^2(N) N^{3\lambda_1-\lambda_2}\|k_1^N\|_1\bigg)\leq N^{-\alpha}.
	\end{align}
\end{lem}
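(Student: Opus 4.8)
The plan is to establish the one-component version of \eqref{tildaXfixedtimeeq}: by joint exchangeability of the trajectories it suffices to show that, for each fixed $i$,
$$\left|(K_1^N(\widetilde X_t))_i-(K_1^N(\overline X_t))_i\right|=\left|\frac1{N-1}\sum_{j\neq i}h_j\right|\le C_{5,\alpha}N^{2\delta-1}\log(N)+C_{5,\alpha}\log^2(N)N^{3\lambda_1-\lambda_2}\|k_1^N\|_1$$
with probability $\ge 1-N^{-\alpha-1}$, where $h_j:=k_1^N(\widetilde x_i^t-\widetilde x_j^t)-k_1^N(\overline x_i^t-\overline x_j^t)$, and then union-bound over $i=1,\dots,N$. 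The first step is to decouple the summands. I would condition on the $\sigma$-algebra $\mathcal F$ generated by the phases $\Phi_{t_{n-1}},\Psi_{t_{n-1}}$ (respectively $\Phi_0=\Psi_0$ when $n=0$) together with the path $(B_i^s)_{s}$ of the $i$-th Brownian motion on the step. By \eqref{tilde} and \eqref{eq:mean} each of $\widetilde x_j^t,\overline x_j^t$ is a measurable functional of the phase of particle $j$ at the left endpoint and of $B_j$ over the step, with deterministic drifts; hence conditionally on $\mathcal F$ the pairs $(\widetilde x_j^t,\overline x_j^t)_{j\ne i}$ are independent across $j$ while $\widetilde x_i^t,\overline x_i^t$ are frozen — exactly the setup of Lemma \ref{central}.

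The second step splits $h_j=\bigl(h_j-\EE[h_j\mid\mathcal F]\bigr)+\EE[h_j\mid\mathcal F]$, where $\EE[h_j\mid\mathcal F]=(k_1^N\ast p_j)(\widetilde x_i^t)-(k_1^N\ast\bar p_j)(\overline x_i^t)$ and $p_j,\bar p_j$ are the conditional laws of $\widetilde x_j^t,\overline x_j^t$. This is the \emph{regularization of the singular part} announced in the introduction, since $k_1^N\ast p$ is smooth and bounded. Using $\|k_1^N\ast g\|_\infty\le\|k_1^N\|_1\|g\|_\infty$ and $\|\nabla(k_1^N\ast g)\|_\infty\le\|k_1^N\|_1\|\nabla g\|_\infty$ as in Lemma \ref{converse}, together with Gaussian/hypoelliptic heat-kernel bounds on $p_j,\bar p_j$ and their gradients over the time increment $t-t_{n-1}$ of order $N^{-\lambda_1}$ (the drift in \eqref{tilde} is bounded and Lipschitz uniformly in $N$ by \eqref{regularity}), and the facts $\widetilde X_{t_{n-1}}=X_{t_{n-1}}$ and $\|\overline X_t-\widetilde X_t\|_\infty\le CN^{-\lambda_2}$ on $\A_T$ from \eqref{overlineminustilde} (which control $\|p_j-\bar p_j\|$ and $|\widetilde x_i^t-\overline x_i^t|$), each $|\EE[h_j\mid\mathcal F]|$ is bounded by $\|k_1^N\|_1$ times a favorable negative power of $N$; averaging over $j$ produces the $C_{5,\alpha}\log^2(N)N^{3\lambda_1-\lambda_2}\|k_1^N\|_1$ contribution.

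The third, and hardest, step treats the centered sum $\frac1{N-1}\sum_{j\ne i}\bigl(h_j-\EE[h_j\mid\mathcal F]\bigr)$. A direct use of Lemma \ref{central} is too lossy since each $h_j$ can be of size $\|k_1^N\|_\infty\sim N^{2\delta}$ while there are $N$ of them. Two observations save it. First, since $\mathrm{supp}\,k_1^N\subseteq B(0,N^{-\lambda_2})$, one has $h_j=0$ unless $\widetilde x_j^t\in B(\widetilde x_i^t,N^{-\lambda_2})$ or $\overline x_j^t\in B(\overline x_i^t,N^{-\lambda_2})$; because this ball has volume $\lesssim N^{-3\lambda_2}$ and the transition densities are bounded, the conditional probability of a given $j$ being relevant is a small negative power of $N$, so a further Law of Large Numbers (Lemma \ref{central} applied to the corresponding indicators, after passing to an $\mathcal F$-measurable superset of the relevant indices obtained by adding a Brownian safety margin) shows that the number $|\mathcal R|$ of relevant $j$ is at most a small power of $N$ times $\log N$ with probability $\ge 1-N^{-\alpha}$ — this is the statement that \emph{few particles feel the singular part}, and it replaces the effective sample size $N$ by $|\mathcal R|\ll N$. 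Second, on $\mathcal R$ one decomposes the distances $|\widetilde x_i^t-\widetilde x_j^t|$ and $|\overline x_i^t-\overline x_j^t|$ into dyadic shells $[2^{-m-1}N^{-\lambda_2},2^{-m}N^{-\lambda_2}]$, on each of which $|h_j|$ is controlled by the scale and the number of contributing particles is again controlled by a Law of Large Numbers; the irreducibly close particles (closer than the typical minimal inter-particle spacing, still a negative power of $N$ larger than $N^{-\delta}$) are discarded on a high-probability event. Applying Lemma \ref{central} shell by shell with the sharpened per-term variance $\EE[h_j^2\mid\mathcal F]\le C\|p_j\|_\infty\|k_1^N\|_2^2\lesssim \|p_j\|_\infty N^{\delta}$ (the bound $\|k_1^N\|_2\le CN^{\delta/2}$ follows exactly as Lemma \ref{lmkenerl}(iii)) and sample size $|\mathcal R|$, and summing over the $O(\log N)$ shells, bounds the centered sum by $C_{5,\alpha}N^{2\delta-1}\log(N)$. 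It is precisely here that the hypotheses $\tfrac13\le\delta$, $\lambda_2<\tfrac13$, $0<\lambda_1<\tfrac{\lambda_2}{3}$ are consumed: they make the dyadic series converge, make the truncation condition $|h_j|\le C\sqrt{|\mathcal R|\,g(N)}$ of Lemma \ref{central} hold, and keep the final exponent below $2\delta-1$.

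Collecting the second and third steps and union-bounding over $i$ gives \eqref{tildaXfixedtimeeq}. The main obstacle is the third step: forcing the Law of Large Numbers error under $N^{2\delta-1}\log N$ when individual summands are as large as $N^{2\delta}$ is impossible without simultaneously exploiting the smallness of $\mathrm{supp}\,k_1^N$ (to shrink the number of interacting pairs) and sharp $L^\infty$ and gradient estimates for the conditional transition densities of the auxiliary \emph{kinetic} dynamics — obtaining the latter in the hypoelliptic setting, with constants uniform in $N$, is itself the technically delicate point, and it is the reason the time-step exponent $\lambda_1$ must be taken so small relative to $\lambda_2$.
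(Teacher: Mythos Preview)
Your three-part architecture matches the paper's: bound the number of particles in $\mathrm{supp}\,k_1^N$, control the conditional mean via hypoelliptic transition-density estimates (this is Lemma~\ref{transition}), and bound the centered fluctuation by concentration. Two differences deserve comment.

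First, order of operations: the paper restricts to the relevant index set $M_{t_n}^t$ (of cardinality at most $M_\ast\lesssim N^{1-3\lambda_1}\log^2 N$) \emph{before} splitting into mean plus fluctuation. The mean is then a sum of at most $M_\ast$ terms, each bounded via Lemma~\ref{transition}(ii) by $CN^{6\lambda_1-\lambda_2}\|k_1^N\|_1$, and the prefactor $\tfrac{M_\ast}{N-1}\lesssim N^{-3\lambda_1}\log^2 N$ converts this into the stated $N^{3\lambda_1-\lambda_2}\|k_1^N\|_1$. Taking the mean over all $j$ first, as you do, loses this gain and yields $N^{6\lambda_1-\lambda_2}\|k_1^N\|_1$, which would force $\lambda_1<\lambda_2/6$ downstream instead of $\lambda_2/3$.

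Second, and more substantively, your dyadic-shell treatment of the fluctuation is not what the paper does, and your sketch does not verify it closes. The paper instead splits by the cardinality $M=\mathrm{card}(M_{t_n}^t)$ into two regimes. For $M>N^{\delta+\lambda_2/2-9\lambda_1/4}$ it applies Lemma~\ref{central} with $g(M)=CMN^{4\delta-2}$, roughly as you suggest. For $M\le N^{\delta+\lambda_2/2-9\lambda_1/4}$, however, a CLT is useless---there are too few nonzero terms for $\sqrt M$-cancellation to help---and the paper introduces a new \emph{sparse} concentration lemma (Lemma~\ref{central2}: if $\EE|Z_i|\le CM^{-2}$ and $|Z_i|\le C$ then $\sum_{i=1}^M|Z_i|\le C_\alpha\log M$ w.h.p., proved by splitting each $Z_i$ into a rare large part and a uniformly small part). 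Your ``discard irreducibly close particles on a high-probability event'' is exactly where this mechanism is needed, but you have not supplied it; the dyadic bound you write down, with variance $\|p_j\|_\infty N^\delta$ and sample size $|\mathcal R|$, gives roughly $N^{-1/2+\delta/2}$ times a positive power of $N^{\lambda_1}$, which matches the target $N^{2\delta-1}$ only at $\delta=1/3$, $\lambda_1=0$.
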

The proof of Lemma \ref{tildaXficedtime}  is carried  out in Section \ref{prolem}. The novel technique in the proof used the fact that  $k_1^N$ has a support with the radius $N^{-\lambda_2}$ (small).
This means that in order to contribute to the interaction, $\widetilde x_j^t$ (or $\overline x_j^t$) has to get close enough (less than $N^{-\lambda_2}$) to $\widetilde x_i^t$ (or $\overline  x_i^t$). 
Due to the effect of Brownian motion we get mixing of the positions of the particles over the whole support of $k_1^N$. Using a Law of Large Numbers argument one can show that the leading order of the interaction can in good approximation be replaced by the respective expectation value. Due to symmetry of $k_1^N$ this expectation value is zero. 
 Significant fluctuations of the interaction $k_1^N$ have very small probability. 

\begin{proof}[The proof of Lemma \ref{tildaX}]
We follow the similar procedure as in Proposition \ref{propconsis}. We divide $[t_n,t_{n+1}]$ into $M+1$ subintervals with length $\Delta \tau=N^{-\frac{\gamma}{3}}$ for some $\gamma>4$ and $\tau_k=k\Delta \tau$, $k=0,\cdots,M+1$.  Recall the event $\mathcal{\overline H}$ as in \eqref{eventH} and denote the event 
\begin{equation}\label{eventH1}
\mathcal{\widetilde H}:=\left\{\max\limits_k\max\limits_{t\in[\tau_k,\tau_{k+1}]}\nel\widetilde X_t-\widetilde X_{\tau_k}\ner_\infty\leq C_BN^{-\frac{\gamma-1}{3}}\right\}.
\end{equation}
It follows from Lemma \ref{lmQ} that
\begin{equation}\label{CHevent1}
 \PP(\mathcal{\overline H}^c),\PP(\mathcal{\widetilde H}^c)\leq C_BN^{\frac{\gamma-1}{3}}\exp(-C_BN^{\frac{2}{3}}),
\end{equation}
for any $\gamma>4$. Furthermore we denote the event
	\begin{align}
\mathcal{G}_{\tau_k}:=\bigg\{&\left\|  K_1^{N}(\widetilde X_{\tau_k})-  K_1^{N}(\overline X_{\tau_k})\right\|_\infty\leq C_{5,\alpha}N^{2\delta-1} \log(N)\notag \\
&+C_{5,\alpha}\log^2(N)N^{3\lambda_1-\lambda_2}\|k_1^N\|_1\bigg\}
\end{align}
in \eqref{tildaXfixedtimeeq},
then it follow from Lemma \ref{tildaXficedtime} that
\begin{equation}\label{Cg}
\PP(\mathcal{G}_{\tau_k}^c)\leq N^{-\alpha}.
\end{equation}

For all $t\in[\tau_k,\tau_{k+1}]$, under the event $\mathcal{G}_{\tau_k}\cap \mathcal{\overline H}\cap \mathcal{\widetilde H}$, we obtain
\begin{align*}
&\left\| K_1^{N}(\widetilde X_t)-  K_1^{N}(\overline X_t)\right\|_\infty\\
\leq& \left\| K_1^{N}(\widetilde X_t)- K_1^{N}(\widetilde X_{\tau_k})\right\|_\infty+\left\| K_1^{N}(\widetilde X_{\tau_k})-  K_1^{N}(\overline X_{\tau_k})\right\|_\infty+\left\| K_1^{N}(\overline X_{\tau_k})-  K_1^{N}(\overline X_t)\right\|_\infty\\
\leq& \norm{\nabla K_1^{N}}_\infty\left(\norm{\widetilde X_t-\widetilde X_{\tau_k}}_\infty+\norm{\overline X_t-\overline X_{\tau_k}}_\infty\right)+\left\| K_1^{N}(\widetilde X_{\tau_k})-  K_1^{N}(\overline X_{\tau_k})\right\|_\infty\\
\leq& CN^{3\delta-\frac{\gamma-1}{3}}+C_{5,\alpha}N^{2\delta-1} \log(N)+C_{5,\alpha}\log^2(N)N^{3\lambda_1-\lambda_2}\|k_1^N\|_1\\
\leq &C(\alpha,T,C_{f_0})N^{2\delta-1} \log(N)+C(\alpha,T,C_{f_0})\log^2(N)N^{3\lambda_1-\lambda_2}\|k_1^N\|_1
\end{align*}
when $\gamma>4$ is sufficiently large. This yields that under the event $\bigcap_{k=0}^M\mathcal{G}_{\tau_k}\cap\mathcal{\overline H}\cap \mathcal{\widetilde H}$ it holds that
	\begin{align*}
&\max\limits_{t\in[t_n,t_{n+1}]} \left\| K_1^{N}(\widetilde X_t)-  K_1^{N}(\overline X_t)\right\|_\infty\\
\leq &C(\alpha,T,C_{f_0})N^{2\delta-1}\log(N)+C(\alpha,T,C_{f_0})\log^2(N) N^{3\lambda_1-\lambda_2}\|k_1^N\|_1.
\end{align*}
Therefore it follows from \eqref{CHevent1} and \eqref{Cg} that
	\begin{align}\label{gnesti}
\PP\bigg(&\max\limits_{t\in[t_n,t_{n+1}]} \left\| K_1^{N}(\widetilde X_t)-  K_1^{N}(\overline X_t)\right\|_\infty\geq C(\alpha,T,C_{f_0})N^{2\delta-1}\log(N) \notag \\
&+C(\alpha,T,C_{f_0}) \log^2(N) N^{3\lambda_1-\lambda_2}\|k_1^N\|_1\bigg)\leq \sum_{k=0}^{M}\PP(\mathcal{G}_{\tau_k}^c)+\PP(\mathcal{\overline H}^c)+\PP(\mathcal{\widetilde H}^c)\notag\\
\leq& TN^{-\frac{3\alpha-\gamma}{3}}+2C_BN^{\frac{\gamma-1}{3}}\exp(-C_BN^{\frac{2}{3}})\leq N^{-\alpha'}.
\end{align}
Denote $C_{4,\alpha'}$ to be the constant $C(\alpha, T, C_{f_0})$ in \eqref{gnesti}. Since $\alpha>0$ is arbitrary and so is $\alpha'$, \eqref{tildaXeq} holds true. This completes the proof of Lemma \ref{tildaX}.
\end{proof}
\subsection{Convergence and the proof of Theorem \ref{mainthm}}
In this section, we achieve the convergence by using the consistency from Proposition \ref{propconsis} and the stability from Proposition \ref{propstab}.
To do this, we first prove the following Gronwall-type inequality.
\begin{lem}\label{lmprior}
	For any $T>0$, let $e(t)$ be a non-negative continuous function on $[0,T]$ with the initial data $e(0)=0$ and $\lambda_2,~\lambda_3$ be two universal constants satisfying $0<\lambda_2<\lambda_3$. Assume that for any $0< T_1\leq T$ the function $e(t)$ satisfies the following differential inequality that holds with $C>0$ independent of $N>0$
	\begin{equation}\label{priorineq}
		\frac{de(t)}{dt}\leq C\sqrt{\log(N)}e(t)+C\log^2(N)N^{-\lambda_3},\quad 0<t\leq T_1,
	\end{equation}
	provided that 
	\begin{equation}\label{priorcon}
	\max\limits_{t\in[0,T_1]}e(t)\leq N^{-\lambda_2},
	\end{equation}
	holds.  Then $e(t)$ is uniformly bounded on $[0,T]$. Furthermore there is a $N_0\in\mathbb{N}$ depending only on $C$ and $T$  such that for all $N\geq N_0$
	\begin{equation}\label{priorres}
	\max\limits_{t\in[0,T]}e(t)\leq  N^{-\lambda_2}.
	\end{equation}

\end{lem}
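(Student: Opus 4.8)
The plan is to run a continuous induction (a bootstrap / continuity argument) on the time variable, exploiting the structure of the hypothesis: the differential inequality \eqref{priorineq} is only assumed to hold on time intervals where the a priori smallness \eqref{priorcon} is valid, so we must show that this smallness is self-improving and therefore never saturates. First I would define
\[
T^* := \sup\Bigl\{\, T_1\in[0,T] \;:\; \max_{t\in[0,T_1]}e(t)\leq N^{-\lambda_2}\,\Bigr\}.
\]
Since $e$ is continuous and $e(0)=0 < N^{-\lambda_2}$ (for $N\geq 2$, say), the set is nonempty and $T^*>0$; by continuity $\max_{t\in[0,T^*]}e(t)\leq N^{-\lambda_2}$, so \eqref{priorcon} holds with $T_1=T^*$ and hence \eqref{priorineq} holds on $(0,T^*]$.

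Next I would solve the linear differential inequality on $[0,T^*]$ by Gronwall's lemma. From \eqref{priorineq} with $e(0)=0$,
\[
e(t)\leq C\log^2(N)N^{-\lambda_3}\int_0^t e^{C\sqrt{\log(N)}(t-s)}\,ds
\leq \frac{C\log^2(N)N^{-\lambda_3}}{C\sqrt{\log(N)}}\Bigl(e^{C\sqrt{\log(N)}\,t}-1\Bigr)
\leq \sqrt{\log(N)}\,N^{-\lambda_3}\,e^{C\sqrt{\log(N)}\,T},
\]
for all $t\in[0,T^*]$. The key quantitative point is that $\lambda_3>\lambda_2>0$, so writing $\lambda_3=\lambda_2+(\lambda_3-\lambda_2)$ with $\lambda_3-\lambda_2>0$ fixed, the prefactor $\sqrt{\log(N)}\,e^{C\sqrt{\log(N)}\,T}$ grows only like $\exp(O(\sqrt{\log N}))=N^{o(1)}$, which is dominated by the gain $N^{-(\lambda_3-\lambda_2)}$. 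Hence there is $N_0=N_0(C,T,\lambda_3-\lambda_2)$ — and since $\lambda_2,\lambda_3$ are universal constants, effectively $N_0=N_0(C,T)$ — such that for all $N\geq N_0$,
\[
\sqrt{\log(N)}\,e^{C\sqrt{\log(N)}\,T}\leq \tfrac12\,N^{\lambda_3-\lambda_2},
\qquad\text{so}\qquad
\max_{t\in[0,T^*]}e(t)\leq \tfrac12\,N^{-\lambda_2}.
\]

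Finally I would close the bootstrap. The bound $\max_{t\in[0,T^*]}e(t)\leq \tfrac12 N^{-\lambda_2}$ is strictly better than the defining threshold $N^{-\lambda_2}$; if $T^*<T$, then by continuity of $e$ the inequality $e(t)\leq N^{-\lambda_2}$ would persist on $[0,T^*+\eta]$ for some $\eta>0$, contradicting the maximality of $T^*$. Therefore $T^*=T$, and the displayed estimate gives $\max_{t\in[0,T]}e(t)\leq \tfrac12 N^{-\lambda_2}\leq N^{-\lambda_2}$, which is \eqref{priorres}; in particular $e$ is uniformly bounded on $[0,T]$. I expect the only genuinely delicate step to be the bookkeeping in the bootstrap — making sure the improved constant is \emph{strictly} below the threshold so that the open–closed continuity argument actually closes — together with checking that the $\exp(O(\sqrt{\log N}))$ factor is absorbed; both are routine but must be stated carefully, and the subunit exponent $\sqrt{\log N}$ (rather than $\log N$) in \eqref{priorineq} is exactly what makes the absorption work for any gap $\lambda_3-\lambda_2>0$.
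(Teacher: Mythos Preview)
Your proposal is correct and follows essentially the same route as the paper's proof: a continuous-induction/bootstrap argument that applies Gronwall on the maximal interval where the a priori bound \eqref{priorcon} holds, then uses $\lambda_3>\lambda_2$ together with the subpolynomial growth of $e^{C\sqrt{\log N}\,T}\,\mathrm{polylog}(N)$ to obtain a strictly improved bound and close the argument. The only cosmetic differences are that the paper phrases this as a contradiction via $T_*=\inf\{t:e(t)\ge N^{-\lambda_2}\}$ rather than your $T^*=\sup\{T_1:\max_{[0,T_1]}e\le N^{-\lambda_2}\}$, and that your displayed prefactor should read $(\log N)^{3/2}$ rather than $\sqrt{\log N}$; neither point affects the argument.
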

\begin{rmk}\label{ATexplain}
	The lemma is in fact a generalization of Gronwall's Lemma. In Gronwall's Lemma it is assumed that  \eqref{priorineq}  holds and \eqref{priorres} is the consequence. Here we have a weaker condition:  namely we  assume that \eqref{priorineq} holds under the additional assumption \eqref{priorcon}.
	But as long as \eqref{priorineq} holds we can control the growth of $e(t)$ via Gronwall's inequality to make sure that \eqref{priorres} remains  valid on an even larger interval.  	 
\end{rmk}

\begin{proof}
 We prove the lemma by contradiction: we assume that there is a $t\in [0,T]$ with $e(t)\geq N^{-\lambda_2}$ and show that for $N\geq N_0$ with some $N_0\in\mathbb{N}$ specified below, we get a contradiction. 

It follows that the infimum over all times $t$ where $e(t)$ is larger than or equal to  $N^{-\lambda_2}$ exists and we define  $$T_*=\inf_{}\{0\leq t \leq T:e(t)\geq N^{-\lambda_2}\}.$$
We get by  continuity of $e(t)$ together with $e(0)=0$ that $T^*>0$,
\begin{equation}\label{contra}e(T_*)=N^{-\lambda_2}\text{ and }\max_{0\leq t \leq T_*}e(t)=N^{-\lambda_2}\;.
\end{equation}
 
Since  \eqref{priorcon} implies \eqref{priorres}, we get for $T_1=T_*$ that

$$\frac{de(t)}{dt}\leq C\sqrt{\log(N)}e(t)+C\log^2(N)N^{-\lambda_3},\quad 0<t\leq T_*.$$
Gronwall's Lemma gives that
$$e(t)\leq e^{C\sqrt{\log(N)}t}\log^2(N)N^{-\lambda_3},$$
in particular $$e(T_*)\leq e^{C\sqrt{\log(N)}T_*}\log^2(N)N^{-\lambda_3}.$$

Since $ e^{C\sqrt{\log(N)}T_*}$ and $\log^2(N)$ are  asymptotically bounded by any positive power of $N$, we can find a $N_0\in\mathbb{N}$  depending only on $C$ and $T_*$ such that for any $N\geq N_0$
$$ e^{C\sqrt{\log(N)}T_*}\log^2(N)< N^{\lambda_3-\lambda_2},\quad \mbox{ for }0<\lambda_2<\lambda_3,$$
 and hence
$$e(T_*)< N^{-\lambda_2}\text{ for any }N\geq N_0\:.$$

Thus  we get a contradiction to \eqref{contra} for all $N\geq N_0$ and the lemma is proven.

\end{proof}

We now return to the proof of Theorem \ref{mainthm}.
Denote the event
\begin{equation}
\mC_{T}:=\left\{\max\limits_{t\in[0,T]}\nel K^N(\overline X_t)-\overline K^N(\overline X_t)\ner_\infty\leq C_{2,\alpha} N^{2\delta-1}\log (N)\right\},
\end{equation}
and consider the quantity $e(t)$ defined as
\begin{equation}
e(t):=\norm{\Phi_t-\Psi_t}_\infty =\sqrt{\log(N)}\nel X_t-\overline X_t\ner_\infty+\nel V_t-\overline V_t\ner_\infty.
\end{equation}
Recall that $$\A_{T}=\left\{\max\limits_{t\in[0,T]} e(t)\leq N^{-\lambda_2}\right\}.$$

To prove the theorem we will show that under the assumptions $\mathcal{C}_{T}$ and $\mathcal{A}_{T}^c\cup\mathcal{S}_{T}(C_{3,\alpha})$ it follows that 
\begin{equation}\label{statement}\max\limits_{t\in[0,T]}e(t)\leq N^{-\lambda_2}.\end{equation}
Let us explain why proving \eqref{statement} under the assumptions $\mathcal{C}_{T}$ and $\mathcal{A}_{T}^c\cup\mathcal{S}_{T}(C_{3,\alpha})$ proves the theorem:
Since
 $\mathcal{C}_{T}$ is the consistency in Proposition \ref{propconsis}, i.e. $\mathbb{P}\left(\mathcal{C}_{T}^c\right)\leq N^{-\alpha}$, and by Proposition \ref{propstab} one has $\mathbb{P}\left(\mathcal{A}_{T}\cap\mathcal{S}_{T}^c(C_{3,\alpha})\right)\leq N^{-\alpha}$,
this implies that 
$$\PP\left(\max\limits_{t\in[0,T]}\norm{\Phi_t-\Psi_t}_\infty \geq N^{-\lambda_2}\right)\leq \PP(\mathcal{C}_{T}^c) +\PP(\A_{T}\cap\mathcal{S}_{T}^c(C_{3,\alpha}) )\leq 2N^{-\alpha}\;.$$
It follows that for any $\alpha>0$, there exists some  $N_0\in \mathbb{N}$ such that 
$$\PP\left(\max\limits_{t\in[0,T]}\norm{\Phi_t-\Psi_t}_\infty \leq N^{-\lambda_2}\right)\geq 1- N^{-\alpha}$$
for all $N\geq N_0$, which proves Theorem \ref{mainthm}.

To prove the statement \eqref{statement} we use Lemma \ref{lmprior}.  We will show that for any $0<T_1\leq T$, under the additional assumption $\A_{T_1}$, the following differential inequality holds
\begin{equation}\label{gwin}
\frac{d e(t)}{dt} 
\leq C \sqrt{\log(N)} e(t)  +C \log^2(N)N^{-\lambda_3},\mbox{ for all } t\in(0,T_1]\,,
\end{equation}
for some $\lambda_3>\lambda_2$.
Then Lemma  \ref{lmprior} states that in fact \eqref{statement} holds which, as explained above, proves Theorem \ref{mainthm}.  Note that since $e(0)=0$, according to the general Gronwall's inequality in Lemma  \ref{lmprior}, the assumption $\A_{T_1}$ can be removed.

Since $\A_{T}\subseteq \A_{T_1}$, we have to prove \eqref{gwin} under the assumption that $\A_{T}\cap\mathcal{C}_{T}\cap(\mathcal{A}_{T}^c\cup\mathcal{S}_{T}(C_{3,\alpha}))=\A_{T}\cap\mathcal{C}_{T}\cap\mathcal{S}_{T}(C_{3,\alpha})$ holds.
Let us recall the assumptions $\mathcal{C}_{T}$, $\mathcal{S}_{T}(C_{3,\alpha})$ and $\A_{T}$  for easier reference. They hold if
\begin{align}
(i)\;\;&\max\limits_{t\in[0,T]}\nel K^N(\overline X_t)-\overline K^N(\overline X_t)\ner_\infty\leq C_{2,\alpha} N^{2\delta-1}\log (N)\label{assum1},\\
(ii)\;\;&\| K^N(X_t)-K^N(\overline X_t) \|_\infty \leq C_{3,\alpha}\log(N)\nel X_t-\overline X_t\ner_\infty\notag\\
&+C_{3,\alpha}\log^2(N)(N^{6\delta-1-\lambda_1-4\lambda_2}+N^{3\lambda_1-2\lambda_2}+N^{2\delta-1}),~\forall~t\in[0,T]\label{assum2} ,\\
(iii)\;\;&\max\limits_{0\leq t\leq T}e(t)\leq N^{-\lambda_2}.
\end{align}
Notice that for any $0<T_1\leq T$
\begin{equation}
\A_{T}\subseteq \A_{T_1},\quad \mC_{T}\subseteq \mC_{T_1}, \quad \mathcal{S}_{T}(C_{3,\alpha})\subseteq \mathcal{S}_{T_1}(C_{3,\alpha}).
\end{equation}
Using the fact that $\frac{d\|x\|_{\infty}}{dt}\leq \|\frac{dx}{dt}\|_{\infty}$, one has for all $t\in(0,T_1]$
\begin{align}
\frac{d e(t)}{dt}&\leq \sqrt{\log(N)}\nel V_t-\overline V_t\ner_\infty+\nel K^N(X_t)-\overline K^N(\overline X_t)\ner_\infty \notag \\
&\leq \sqrt{\log(N)}\nel V_t-\overline V_t\ner_\infty+\nel K^N(X_t)- K^N(\overline X_t)\ner_\infty +\nel K^N(\overline X_t)-\overline K^N(\overline X_t)\ner_\infty.
\end{align}
It follows that
\begin{align}\label{et}
\frac{d e(t)}{dt}
&\leq \sqrt{\log(N)}\nel V_t-\overline V_t\ner_\infty \notag \\
&~~+C_{3,\alpha}\log(N)\nel X_t-\overline X_t\ner_\infty+C_{3,\alpha}\log^2(N)(N^{6\delta-1-\lambda_1-4\lambda_2}+N^{3\lambda_1-2\lambda_2}+N^{2\delta-1})\notag \\
&~~+C_{2,\alpha} N^{2\delta-1}\log (N)\notag\\
&\leq C(\alpha,T,C_{f_0})\sqrt{\log(N)} e(t) \notag \\
&\quad+C(\alpha,T,C_{f_0})\log^2(N)(N^{6\delta-1-\lambda_1-4\lambda_2}+N^{3\lambda_1-2\lambda_2}+N^{2\delta-1})\notag\\
&\leq  C(\alpha,T,C_{f_0})\sqrt{\log(N)} e(t) +C(\alpha,T,C_{f_0})\log^2(N) N^{-\lambda_3},
\end{align}
where in the first inequality we used  assumptions \eqref{assum1} and \eqref{assum2} and  in the second inequality we used the fact that 
\begin{equation}
\sqrt{\log(N)}\nel V_t-\overline V_t\ner_\infty+\log(N)\nel X_t-\overline X_t\ner_\infty=\sqrt{\log(N)} e(t).
\end{equation}
Here we denote
\begin{equation}
-\lambda_3:=\max\left\{6\delta-1-\lambda_1-4\lambda_2,3\lambda_1-2\lambda_2,2\delta-1\right\}.
\end{equation}

Notice that for
\begin{equation}
0<\lambda_2<1/3;~0<\lambda_1<\frac{\lambda_2}{3};~\frac{1}{3}\leq\delta<\min\left\{\frac{\lambda_1+3\lambda_2+1}{6},\frac{1-\lambda_2}{2}\right\},
\end{equation}
one has $-\lambda_3<-\lambda_2$. In other words, we obtain that for $\lambda_2<\lambda_3$
\begin{align}
\frac{d e(t)}{dt}\leq C(\alpha,T,C_{f_0})\sqrt{\log(N)} e(t) +C(\alpha,T,C_{f_0})\log^2(N) N^{-\lambda_3},\mbox{ for all } t\in(0,T_1]\,,
\end{align}
which verifies \eqref{gwin} and the theorem is proven.

\section{Proof of Theorem \ref{cor}}
In order to prove the error estimate between $f_t$ and $\mu_\Phi(t)$, let us split the error into three parts
\begin{align}
W_p(f_t,\mu_\Phi(t))&\leq W_p(f_t,f_t^N)+W_p(f_t^N,\mu_\Psi(t))+W_p(\mu_\Psi(t),\mu_\Phi(t)).
\end{align}
The   Theorem \ref{cor} is proven once we   obtain the respective error estimates of those three parts.
\begin{proof}[Proof of Theorem \ref{cor}]
	
	$\bullet$\textit{The first term $W_p(f_t,f_t^N)$}.  The convergence of this term is a deterministic result: solutions of the regularized VPFP equations \eqref{vlasov} approximate solutions of the original VPFP equations \eqref{vlasovoriginal} as the width of the cut-off goes to zero.
	It follows from \cite[Lemma 3.2]{carrillo2018propagation} that
	\begin{equation}\label{term1}
	\max\limits_{t\in[0,T]}W_p(f_t,f_t^N)\leq N^{-\delta}e^{C_1\sqrt{\log(N)}},
	\end{equation}
	where $p\in[1,\infty)$, $N>3$ and $C_1$ depends only on $T$ and $C_{f_0}$. The proof is inspired by the method of Leoper \cite{loeper2006uniqueness}. Note that here we can't follow the method in \cite{lazarovici2015mean} directly since the support of $f^N$ and $f$ are not compact in our present case.
	
	$\bullet$\textit{The second term $W_p(f_t^N,\mu_\Psi(t))$}. This term concerns the sampling of the mean-field dynamics by discrete particle trajectories. The convergence rate has been proved in \cite[Corollary 9.4]{lazarovici2015mean} by using the concentration estimate of Fournier and Guillin \cite{fournier2015rate}. We summarize the result as follows: let $p\in[1,\infty)$, $\kappa<\min\{\delta,\frac{1}{6},\frac{1}{2p}\}$ and $N>3$. Assume that there exists $m>2p$ such that $$\iint_{\RR^6}(|x|^m+|v|^m)f_0(x,v)dxdv<+\infty.$$ Then there exist  constants $C_2$ and $C_3$ such that it holds
	\begin{align}\label{term2}
	\PP\bigg(\max\limits_{t\in[0,T]}W_p(f_t^N,\mu_\Psi(t))&\leq \sqrt{\log (N)}N^{-\kappa}e^{C_2\sqrt{\log(N)}}\bigg)\notag \\
	&\geq 1-C_3\left(e^{-C_4N^{1-\max\{6,2p\}\kappa}}+N^{1-\frac{m}{2p}}\right).
	\end{align}
	
	$\bullet$\textit{The third term $W_p(\mu_\Psi(t),\mu_\Phi(t))$}. The convergence of this term is a direct result of Theorem \ref{mainthm}. Indeed, it follows from \cite[Lemma 5.2]{lazarovici2015mean} that for all $p\in[0,\infty]$
	\begin{equation}
	\max\limits_{t\in[0,T]}W_p(\mu_\Psi(t),\mu_\Phi(t))\leq \max\limits_{t\in[0,T]}\norm{\Psi(t)-\Phi(t)}_\infty.
	\end{equation}
	Then we choose $\alpha=\frac{m}{2p}-1$ in Theorem \ref{mainthm} so that
	\begin{equation}\label{term3}
	\mathbb{P}\left(\max\limits_{t\in[0,T]}W_p(\mu_\Psi(t),\mu_\Phi(t)) \leq N^{-\lambda_2} \right)\geq 1-N^{1-\frac{m}{2p}}.
	\end{equation}
	
	$\bullet$\textit{Convergence of $W_p(f_t,\mu_\Phi(t))$}. Collecting estimates \eqref{term1}, \eqref{term2} and \eqref{term3} and choosing $\kappa <\min\{\delta,\frac{1}{6},\frac{1}{2p}\}$, it follows  that
	\begin{align}
	\PP\bigg(\max\limits_{t\in[0,T]}W_p(f_t,\mu_\Phi(t))&\leq (1+\sqrt{\log (N)})N^{-\kappa}e^{C_5\sqrt{\log(N)}}+N^{-\lambda_2}\bigg)\notag\\
	&\geq 1-C_6\left(e^{-C_7N^{1-\max\{6,2p\}\kappa}}+N^{1-\frac{m}{2p}}\right),
	\end{align}
	where $C_5$ depends only on $T$ and $C_{f_0}$, and $C_6$, $C_7$ depend only on $m$, $p$, $\kappa$. We can simplify this result by demanding $N\geq e^{\left(\frac{2C_5}{1-3\lambda_2}\right)^2}$, which yields $N^{1-3\lambda_2}\geq (1+\sqrt{\log (N)})e^{C_5\sqrt{\log(N)}}$. Hence we conclude that
	\begin{align}
	\PP\bigg(\max\limits_{t\in[0,T]}W_p(f_t,\mu_\Phi(t))&\leq N^{-\kappa+1-3\lambda_2}+N^{-\lambda_2}\bigg)\notag\\
	&\geq 1-C_6\left(e^{-C_7N^{1-\max\{6,2p\}\kappa}}+N^{1-\frac{m}{2p}}\right).
	\end{align}
\end{proof}

\section{The proof of Lemma \ref{tildaXficedtime}}\label{prolem}
In this section, we present the proof of Lemma \ref{tildaXficedtime}, which provides the distance between $K_1^{N}(\widetilde X_t)$ and $K_1^{N}(\overline X_t)$ ($t\in[t_n,t_{n+1}]$), where  $(\widetilde X_t,\widetilde V_t)$, $(\overline X_t,\overline V_t)$ satisfying \eqref{tilde}-\eqref{tilde1} and \eqref{eq:mean} respectively with two different initial phases $(X_{t_{n-1}},V_{t_{n-1}})$ and $(\overline X_{t_{n-1}},\overline X_{t_{n-1}})$ at time $t=t_{n-1}$ when $1\leq n\leq M'$, or $(X_{0},V_{0})$ and $(\overline X_{0},\overline V_{0})$ at time $t=0$ when $n=0$. 
To do this, we introduce the following stochastic process: For time $0\leq s \leq t$ and $a:=(a_x,a_v)\in \RR^{6N}$, let $Z^{a,N}_{t,s}:=(Z^{a,N}_{x,t,s},Z^{a,N}_{v,t,s})$ be the process starting at time $s$ at the position $(a_x,a_v)$ and evolving from time $s$ up to time $t$ according to the mean-field force $\overline K^N$:
\begin{align}\label{lastprocess}
\begin{cases}
dZ^{a,i,N}_{x,t,s}&=Z^{a,i,N}_{v,t,s} dt,\quad  t>s,\\
dZ^{a,i,N}_{v,t,s}&=\int_{\RR^3} k^N(Z^{a,i,N}_{x,t,s}-x)\rho^N(x,t)dx+\sqrt{2\sigma}dB_i^t,\quad i=1,\cdots,N,
\end{cases}
\end{align}
and 
\begin{equation}
(Z^{a,i,N}_{x,s,s},Z^{a,i,N}_{v,s,s})=(a_x^i,a_v^i),\quad \mbox{at }t=s.
\end{equation}
Note that here $(Z^{a,i,N}_{x,t,s},Z^{a,i,N}_{v,t,s})$, $i=1,\cdots,N$ are independent.
Furthermore $(Z^{a,N}_{x,t,s},Z^{a,N}_{v,t,s})$ has the strong Feller property  (see \cite{Girsanov59} Definition (A)),  implying in particular that it has a transition probability density $u^{a,N}_{t,s}$ which is given by
the product $u^{a,N}_{t,s}:=\prod_{i=1}^N u^{a,i,N}_{t,s}$. Hence each term $u^{a,i,N}_{t,s}$ is the transition probability density of $(Z^{a,i,N}_{x,t,s},Z^{a,i,N}_{v,t,s})$ 
and is also the solution to the linearized equation for $t>$:
\begin{equation}
\partial _t u^{a,i,N}_{t,s}+v\cdot\nabla_x u^{a,i,N}_{t,s}+k^N\ast \rho^N\cdot \nabla_v u^{a,i,N}_{t,s}=\Delta_vu^{a,i,N}_{t,s},\quad u^{a,i,N}_{s,s}=\delta_{a_i},
\end{equation}
where $\rho^N=\int_{\RR^3}f^N(t,x,v)dv$, and $f^N$ solves the regularized VPFP  equations \eqref{vlasov} with initial condition $f_0$.

Consider now the process $Z^{a,N}_{t,s}$ and $Z^{b,N}_{t,s}$ for two different starting points $a,b\in \RR^{6N}$. It is intuitively clear that the probability density $u^{a,i,N}_{t,s}$ and $u^{b,i,N}_{t,s}$ are just a shift of each other. The next lemma gives an estimate for the distance between any two densities
in terms of the distance between the starting points $a$ and $b$ and the elapsed time $t-s$. The proof is carried out in Appendix A.
\begin{lem}\label{transition}
	\label{lemma}There exists a positive constant $C$ depending only on $C_{f_0}$ and
	$T$ such that for each $N \in \mathbb{N}$, any starting points $a,b \in
	\mathbb{R}^{6 N}$ and any time $0 < t \leqslant T$, the following estimates
	for the transition probability densities $u_{t, s}^{a,i,N}$ resp. $u_{t,
		s}^{b,i,N}$ of the processes $Z_{t, s}^{a,i,N}$ resp. $Z_{t, s}^{b,i,N}$ given
	by (\ref{lastprocess}) hold for $t-s<\min\{1,T-s\}$:	
		\begin{enumerate}[(i)]
					\item $ \| u_{t, s}^{a,i,N} \|_{\infty,1}
				\leqslant C \left((t - s)^{-\frac{9}{2}}+1\right)$,
		\item$\| u_{t, s}^{a,i,N} -
		u_{t, s}^{b,i,N} \|_{\infty,1} \leqslant C  | a - b|\left((t - s)^{- 6}+1\right).$
			\end{enumerate} 
		The norm $\|\cdot\|_{p,q}$ denotes the $p$-norm in the $x$ and $q$-norm in the $v$-variable, i.e. for any $f:\mathbb{R}^3\times\mathbb{R}^3\to\mathbb{R} $
		\begin{equation}
		\|f\|_{p,q}:=\left(\int_{\RR^3} \left(\int_{\RR^3} |f(x,v)|^q dv\right)^{p/q}dx\right)^{1/p}.
		\end{equation}
\end{lem}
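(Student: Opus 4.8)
\emph{Plan of proof.} Since the processes $Z^{a,i,N}_{\cdot,s}$, $i=1,\dots,N$, are independent and each is governed by the \emph{same} linear SDE, it suffices to prove both bounds for a single component; accordingly fix $i$ and $s$, write $\tau:=t-s\in(0,\min\{1,T-s\})$, and drop the index $i$. The density $u^{a,N}_{t,s}$ is the fundamental solution of the linear kinetic Fokker--Planck operator $\mathcal L^N:=\partial_t+v\cdot\nabla_x+E^N(x,t)\cdot\nabla_v-\sigma\Delta_v$ with $E^N:=k^N\ast\rho^N$, and the crucial structural point is that $E^N$ depends only on $(x,t)$ and that $E^N$ together with $\nabla_x E^N$ is bounded \emph{uniformly in $N$ and $t\in[0,T]$}: this is \eqref{con2} of Lemma \ref{converse} combined with the regularity \eqref{regularity}, so that every constant below depends only on $C_{f_0}$ and $T$. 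The plan is to establish an anisotropic Gaussian upper bound for $u^{a,N}_{t,s}$ and for its gradient in the starting point $a=(a_x,a_v)$, namely
\begin{equation*}
u^{a,N}_{t,s}(x,v)\le \frac{C}{\tau^{6}}\exp\Big(-c\frac{|x-a_x-\tau a_v|^2}{\tau^3}-c\frac{|v-a_v|^2}{\tau}\Big),\qquad |\nabla_a u^{a,N}_{t,s}(x,v)|\le \frac{C}{\tau^{3/2}}\cdot(\text{the same bound with a smaller }c),
\end{equation*}
uniformly in $N$ and $0<\tau<\min\{1,T-s\}$; both assertions of the lemma then follow at once.

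Granting these Gaussian bounds, (i) and (ii) are immediate. For (i), integrating in $v$ gives $\int_{\RR^3}\exp(-c|v-a_v|^2/\tau)\,dv\le C\tau^{3/2}$, whence $\|u^{a,N}_{t,s}\|_{\infty,1}=\sup_x\int u^{a,N}_{t,s}(x,v)\,dv\le C\tau^{-6}\cdot\tau^{3/2}=C\tau^{-9/2}\le C(\tau^{-9/2}+1)$. For (ii), the mean value theorem along the segment joining $a$ and $b$ gives $\|u^{a,N}_{t,s}-u^{b,N}_{t,s}\|_{\infty,1}\le |a-b|\sup_{c}\|\nabla_a u^{c,N}_{t,s}\|_{\infty,1}$, and the gradient bound together with the same $v$-integration yields $\sup_c\|\nabla_a u^{c,N}_{t,s}\|_{\infty,1}\le C\tau^{-3/2}\cdot\tau^{-6}\cdot\tau^{3/2}=C\tau^{-6}\le C(\tau^{-6}+1)$.

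To obtain the Gaussian bounds I would run a parametrix (Levi/Duhamel) construction around the \emph{free} Kolmogorov operator $\mathcal L^N_0:=\partial_t+v\cdot\nabla_x-\sigma\Delta_v$, whose fundamental solution $G_\tau(\cdot;a)$ is the explicit Gaussian density of $\big(a_x+\tau a_v+\sqrt{2\sigma}\int_0^\tau(\tau-r)\,dB_r,\ a_v+\sqrt{2\sigma}B_\tau\big)$; its single-component covariance has entries $\sim\tau^{3},\tau^{2},\tau^{2},\tau$ and determinant $\sim\tau^4$, so $G_\tau$ obeys the displayed bound, and the parabolic scaling $(x,v)\mapsto(\tau^{3/2}\xi,\tau^{1/2}\eta)$ gives $\|G_\tau\|_{p,q}\le C\tau^{-\frac92(1-1/p)-\frac32(1-1/q)}$, $\|\nabla_vG_\tau\|_{p,q}\le C\tau^{-1/2}\|G_\tau\|_{p,q}$, and $\|\nabla_xG_\tau\|_{p,q}\le C\tau^{-3/2}\|G_\tau\|_{p,q}$. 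Because $E^N$ is $v$-independent, $E^N\cdot\nabla_v u=\nabla_v\cdot(E^N u)$, so the Duhamel identity can be written with the $v$-gradient on the kernel,
\begin{equation*}
u^{a,N}_{t,s}=G_{\tau}(\cdot;a)+\int_s^t(\nabla_v G_{t-r})\ast\big(E^N(\cdot,r)\,u^{a,N}_{r,s}\big)\,dr\qquad(\text{convolution in }(x,v)).
\end{equation*}
Iterating produces a Neumann series in which each step costs a factor $\|E^N\|_\infty$ and a time-convolution against the half-power-integrable singularity $\sigma^{-1/2}$ coming from $\nabla_vG$; the time integrals of products of such singularities converge and the series sums to an entire (Mittag-Leffler-type) function of $\sqrt\tau\,\|E^N\|_\infty$, reproducing the Gaussian profile term by term. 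Differentiating the series in $a$ (the derivative falls on $G_\tau(\cdot;a)$ and, after integration by parts, on the kernels) gives the gradient bound with the extra $\tau^{-3/2}$. Uniformity in $N$ is automatic, since only $\|E^N\|_\infty$ and $\|\nabla_xE^N\|_\infty$ enter.

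The delicate point — and the one I expect to be the main obstacle — is making this Duhamel series converge \emph{in the anisotropic norm} $\|\cdot\|_{\infty,1}=\|\cdot\|_{L^\infty_xL^1_v}$: placing the full $L^\infty_x$ weight on a single factor $\nabla_vG$ produces a $\tau^{-5}$-type singularity that is not time-integrable, so one must instead distribute the integrability exponents along the whole chain, estimating each outer convolution in $\|\cdot\|_{\infty,1}$ against an inner term in $\|\cdot\|_{p',1}$ with $p'$ chosen so that $\|\nabla_vG_\sigma\|_{p,1}\sim\sigma^{-1/2-\frac92(1-1/p)}$ stays time-integrable, using that $u^{a,N}_{r,s}$ is a probability density ($\|u^{a,N}_{r,s}\|_{1,1}=1$) and that multiplication by $E^N$ is bounded on every $\|\cdot\|_{p,q}$. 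Equivalently one can avoid freezing the delta entirely by first proving an $L^1\!\to\!L^\infty_xL^1_v$ hypoelliptic smoothing estimate via a Nash/Moser iteration adapted to the kinetic scaling (effective dimensions $3$ in $x$, $1$ in $v$, giving the rates $\tau^{-9/2}$ and, with one $x$-derivative, $\tau^{-6}$) and then adding the bounded drift perturbatively. Either way, the bookkeeping of the anisotropic Young inequalities and of the small-time singularities is the technical heart; the rest is routine.
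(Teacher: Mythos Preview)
Your plan is coherent and would, if executed, yield the statement; but it is a genuinely different route from the paper's. You aim for \emph{pointwise} anisotropic Gaussian upper bounds on $u$ and on $\nabla_a u$ via a full parametrix/Neumann series around the free Kolmogorov kernel, and then read off $(i)$ and $(ii)$ by integration in $v$ and the mean-value theorem. The paper never proves pointwise Gaussian bounds and never differentiates in the starting point. Instead it (a) shifts to a moving frame along the deterministic characteristic $c_t$ starting at $\tfrac12(a_i+b_i)$, so that the drift becomes $\overline k^N(\cdot+x^c_t)-\overline k^N(x^c_t)$, which by the \emph{Lipschitz} bound on $\overline k^N$ is $O(|x|)$; (b) works directly with the difference $w^a-w^b$ and its weighted version $\eta:=|\cdot|\,|w^a-w^b|$, obtaining from Duhamel a single closed inequality $\eta_t\lesssim |a-b|\,(\text{explicit }G\text{-term})+\int_0^t |\nabla_vG_{t-s}|*\eta_s\,ds$; (c) applies a singular Gronwall to get $\|\eta\|_{1,1}\le C|a-b|$, then bootstraps $\|\eta\|_{p_k,1}$ in finitely many steps ($p_{k+1}=10p_k/(10-p_k)$, reaching $p=\infty$ after ten steps) by splitting $\int_0^t=\int_0^{t/2}+\int_{t/2}^t$ and using $\|\nabla_vG_\sigma\|_{10/9,1}\lesssim\sigma^{-19/20}$ on the second half.

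What each approach buys: the paper's moving-frame plus $|x|$-weight trick turns the drift perturbation into a \emph{multiplicative} gain that closes a Gronwall inequality for the difference itself, so there is no infinite Neumann series to sum and the $\|\cdot\|_{\infty,1}$ obstacle you correctly identify is circumvented by a finite $p$-ladder rather than by distributing exponents over an infinite chain. The price is that the argument is tailored to norm estimates and to this particular pair $(a,b)$; it gives nothing pointwise. Your parametrix route is heavier but more general: it would deliver two-sided Gaussian bounds and derivative estimates in the spirit of Delarue--Menozzi or Konakov--Menozzi, from which the lemma is a corollary. Note, however, that in your scheme the Lipschitz bound on $E^N$ must also enter (for the Gaussian profile to survive the iteration and for the $a$-derivative to propagate through the series), whereas you phrase most of the argument as if $\|E^N\|_\infty$ alone suffices; be explicit about where $\|\nabla_xE^N\|_\infty$ is used, since this is exactly what the paper's $|x|$-weight exploits.
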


To this end one assumes $\Delta t=t_{n+1}-t_n=N^{-\lambda_1}$. Next we define for $t\in[t_n,t_{n+1}]$ the random sets 
	\begin{equation}\label{Mtn}
	M_{t_n}^t:=\left\{2\leq j\leq N: \left| x_1^{t_n}- x_j^{t_n}+(t-t_n)(v^{t_n}_1- v^{t_n}_j)\right|\leq N^{-\lambda_2}+ \log (N) \Delta t^{\frac{3}{2}} \right\} 
	\end{equation} 
	and 
	\begin{equation}\label{-Mtn}
	\overline M_{t_n}^t:=\left\{2\leq j\leq N: \left| \overline x_1^{t_n}- \overline x_j^{t_n}+(t-t_n)(\overline v^{t_n}_1- \overline v^{t_n}_j)\right|\leq 3N^{-\lambda_2}+ \log (N) \Delta t^{\frac{3}{2}}  \right\} .
	\end{equation}
Here $M_{t_n}^t$ is at time $t_n$ the set of indices of those  particles $x_j^{t_n}$ which are in the ball of  radius $N^{-\lambda_2}+ \log (N)  \Delta t^{\frac{3}{2}} $ around $x_1^{t_n}+(t-t_n)(v^{t_n}_1- v^{t_n}_j)$, and $\overline M_{t_n}^t$ is an  intermediate set introduced to help to control $M_{t_n}^t$.
 Note that under the event $\mathcal{A}_T$, we have $M_{t_n}^t\subseteq \overline M_{t_n}^t$.

We also define random sets for $t\in[t_n,t_{n+1}]$
\begin{equation}\label{Stn}
\mathcal{S}_{t_n}^t=\left\{\mbox{card }(M_{t_n}^t)< 2C_\ast  N\left(3N^{-\lambda_2}+ \log (N)  \Delta t^{\frac{3}{2}} \right)^2 \right\},
\end{equation}
and
\begin{equation}\label{-Stn}
\overline{\mathcal{S}}_{t_n}^t=\left\{\mbox{card }(\overline M_{t_n}^t) < 2C_\ast N\left(3N^{-\lambda_2}+ \log (N)  \Delta t^{\frac{3}{2}} \right)^2 \right\}\;,
\end{equation}
where $C_\ast$ will be defined later. Here $\mathcal{S}_{t_n}^t$ indicates the event where the number of particles inside the set $M_{t_n}^t$ is smaller than $2C_\ast  N\left(3N^{-\lambda_2}+ \log (N)  \Delta t^{\frac{3}{2}} \right)^2  $, and the  event $\overline{\mathcal{S}}_{t_n}^t$ is introduced to help estimate $\PP(\mathcal{S}_{t_n}^t)$.
 
Our next lemma provides the probability estimate of the event where particle $\widetilde x_{j}^t$ (or $\overline x_{j}^t$) is close to $\widetilde x_{1}^t$ (or $\overline x_{1}^t$) (distance smaller than $N^{-\lambda_2}$) during a short time interval $t-t_n$, which contributes to the interaction of $k_1^N$ defined in  \eqref{k1}, since the support of $k_1^N$ has radius $N^{-\lambda_2}$. 
\begin{lem}\label{unlikely}
Let $(\widetilde x_{j}^t,\widetilde v_{j}^t)$  satisfy \eqref{tilde} and  \eqref{tilde1} on $t\in[t_n,t_{n+1}]$ and the random   set $M_{t_n}^t$ satisfy \eqref{Mtn}, then for any $\alpha>0$, there exists some constant $N_0>0$ depending only on $\alpha$, $T$ and $C_{f_0}$ such that for all $N\geq N_0$ it holds
\begin{align*}\PP\left(\min_{t\in[t_n,t_{n+1}]}\max_{j\in( M^ t_{t_n})^c}\left\{\left|\widetilde x_{1}^t-\widetilde x_{j}^t\right|\right\}<N^{-\lambda_2}\;\;\right)&\leq N^ {-\alpha},
\\\PP\left(\min_{t\in[t_n,t_{n+1}]}\max_{j\in(\overline M^ t_{t_n})^c}\left\{\left|\overline x_{1}^t-\overline x_{j}^t\right|\right\}<N^{-\lambda_2}\;\;\right)&\leq  N^ {-\alpha}.
\end{align*}
This means that for some particle index $j$ outside $M_{t_n}^t$, $\widetilde x_{j}^t$ for some $t\in[t_n,t_{n+1}]$ such that $\left|\widetilde x_{1}^t-\widetilde x_{j}^t\right|<N^{-\lambda_2}$ (i.e. $\widetilde x_{j}^t$  contributes to the interaction of $k_1^N$) with probability less than $N^ {-\alpha}$. Here $\PP$ is understood to be taken on the initial condition $\widetilde x_{j}^{t_n}$.
\end{lem}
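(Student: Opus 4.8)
The plan is to fix $j\in\{2,\dots,N\}$, reduce the statement (after a union bound over $j$) to a large--deviation estimate for the Brownian part of the trajectories of particles $1$ and $j$ on the short interval $[t_n,t_{n+1}]$, and then to invoke the Gaussian maximal inequality \eqref{Bproperty}. Integrating \eqref{tilde} twice on $[t_n,t]$, I would write $\widetilde x_i^t$ (for $i\in\{1,j\}$) as the sum of a \emph{ballistic} part $p_i(t):=\widetilde x_i^{t_n}+(t-t_n)\widetilde v_i^{t_n}$, a \emph{force} part $F_i(t):=\int_{t_n}^t\!\int_{t_n}^s\!\int_{\RR^3}k^N(\widetilde x_i^\tau-x)\rho^N(x,\tau)\,dx\,d\tau\,ds$, and a \emph{noise} part $W_i(t):=\sqrt{2\sigma}\int_{t_n}^t(B_i^s-B_i^{t_n})\,ds$; the same decomposition applies to $\overline x_i^t$, with the mean field $\overline K^N$. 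By \eqref{con2} of Lemma \ref{converse} together with \eqref{regularity} (which holds for $\rho^N$ uniformly in $N$), one has $\|k^N\ast\rho^N(\cdot,\tau)\|_\infty\le C$, hence $|F_1(t)-F_j(t)|\le C(t-t_n)^2\le C\Delta t^2$ deterministically, where $\Delta t:=t_{n+1}-t_n=N^{-\lambda_1}$.

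Next, on the event $\{j\notin M_{t_n}^t\}$ the ballistic separation of the two particles is large, $|p_1(t)-p_j(t)|>N^{-\lambda_2}+\log(N)\,\Delta t^{3/2}$ (immediate from \eqref{-Mtn} in the $\overline x$--case; for the $\widetilde x$--case one reads \eqref{Mtn} with $\widetilde x_i^{t_n},\widetilde v_i^{t_n}$ in place of $x_i^{t_n},v_i^{t_n}$, see the closing remark). By the triangle inequality, if moreover $|\widetilde x_1^t-\widetilde x_j^t|<N^{-\lambda_2}$, then $|W_1(t)-W_j(t)|>\log(N)\,\Delta t^{3/2}-C\Delta t^2\ge\tfrac12\log(N)\,\Delta t^{3/2}$ once $N$ is large (since $\Delta t^{1/2}/\log N\to0$). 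Now $W_1(t)-W_j(t)=\sqrt{2\sigma}\int_{t_n}^t\widehat B^s\,ds$, where $\widehat B^s:=(B_1^s-B_1^{t_n})-(B_j^s-B_j^{t_n})$ is, coordinatewise, a Brownian motion started at $0$ at time $t_n$ whose increments on $[t_n,t_{n+1}]$ are independent of all information at time $t_n$, in particular of $M_{t_n}^t$. Since $\max_{t\in[t_n,t_{n+1}]}|W_1(t)-W_j(t)|\le\sqrt{2\sigma}\,\Delta t\,\max_{t_n\le s\le t_{n+1}}|\widehat B^s|$, the bad event for the index $j$ is contained in $\{\max_{t_n\le s\le t_{n+1}}|\widehat B^s|>c_\sigma\log(N)\sqrt{\Delta t}\}$ with $c_\sigma$ depending only on $\sigma$; applying \eqref{Bproperty} (with $b=c_\sigma\log(N)\sqrt{\Delta t}$ and interval length $\Delta t$) bounds this by $C(\log N)^{-1}\exp(-c\log^2N)$, which is $\le N^{-\alpha-1}$ for $N\ge N_0$ (with $N_0$ depending only on $\alpha$ and the fixed constants $\sigma,T,C_{f_0}$) because $\exp(-c\log^2N)$ decays faster than any negative power of $N$. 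A union bound over the $N-1$ indices $j$ (no independence among the $\widehat B$'s is needed) then gives the first estimate; the second follows verbatim for $(\overline x_i^t,\overline v_i^t)$ solving \eqref{eq:mean}, the enlarged radius $3N^{-\lambda_2}$ affecting only constants.

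The one step I expect to require some care is the time--uniform reduction: the radius defining $M_{t_n}^t$ depends on $t$, so one must check, as above, that on the entire event where $j\notin M_{t_n}^t$ and the particles meet for \emph{some} $t\in[t_n,t_{n+1}]$, the single functional $\max_{s\in[t_n,t_{n+1}]}|\widehat B^s|$ is forced to be $\gtrsim\log(N)\sqrt{\Delta t}$; this lets one finish with a single maximal inequality, without discretising $[t_n,t_{n+1}]$ as in Lemma \ref{lmQ}. A secondary bookkeeping point for the first inequality is that \eqref{Mtn} is written with the $N$--particle data $x_i^{t_n},v_i^{t_n}$ while the relevant trajectory is $\widetilde x_i^{\cdot}$: this is harmless because the argument uses only that $M_{t_n}^t$ is measurable with respect to the information at time $t_n$, which is independent of the Brownian increments on $[t_n,t_{n+1}]$ that drive $\widehat B$, so the conditional estimate is uniform over all admissible configurations and may be integrated out. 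The remaining ingredients — the deterministic force bound, the exact Gaussian tail, and the union bound — are routine.
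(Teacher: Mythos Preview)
Your proposal is correct and follows essentially the same route as the paper: decompose $\widetilde x_i^t$ into ballistic, force, and noise parts, use the uniform bound $\|k^N\ast\rho^N\|_\infty\le C$ to control the force by $C\Delta t^2$, observe that $j\notin M_{t_n}^t$ forces the ballistic separation to exceed $N^{-\lambda_2}+\log(N)\Delta t^{3/2}$, and conclude via a Gaussian maximal inequality that the noise cannot close the gap except with super-polynomially small probability. The only cosmetic difference is that the paper invokes the reflection principle directly (splitting into coordinates and reducing $\max_t$ to the terminal time), whereas you quote the packaged maximal inequality \eqref{Bproperty}; these are equivalent, and your version is arguably cleaner. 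Your explicit remarks on the time-uniform reduction and on the $x^{t_n}$ versus $\widetilde x^{t_n}$ bookkeeping are points the paper glosses over.
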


\begin{proof}

Let $(1,j)$ be fixed and $a_1^t:=(a^t_{1,x},a^t_{1,v}),b_j^t:=(b^t_{j,x},b^t_{j,v})\in\mathbb{R}^6$ satisfy the stochastic differential equations
$$da^t_{1,x}=a^ t_{1,v} dt,~da^ t_{1,v}= \sqrt{2\sigma}dB_1^ t;\quad db^t_{j,x}=b^t_{j,v} dt,~db^t_{j,v}=\sqrt{2\sigma}dB_j^ t,\quad t_n<t\leq t_{n+1},$$
with the initial data $a_1^{t_n}=0$ and  $b_j^{t_n}=0$. Here $B_j^t$ is the same as in \eqref{tilde}. It follows from the evolution equation \eqref{tilde} that
$$d \left(\widetilde x_1^t - a^t_{1,x}\right)= \left(\widetilde v^{t}_1 - a^t_{1,v}\right)dt \text{ and }d \left(\widetilde v^{t}_1 - a^t_{1,v}\right)= \overline k^N(\widetilde x^ t_1) dt,\quad t_n<t\leq t_{n+1},$$
and
$$d \left(\widetilde x^{t}_j - b^t_{j,x}\right)= \left(\widetilde v^{t}_j - b^t_{j,v}\right)dt \text{ and }d \left(\widetilde v^{t}_j - b^t_{j,v}\right)= \overline k^N(\widetilde x^ t_j) dt,\quad t_n<t\leq t_{n+1}, $$
where $$\overline k^N:=k^N\ast\rho^N,$$ which is bounded by  $\norm{ \overline k^N}_\infty\leq C(\norm{\rho^N}_1+\norm{\rho^N}_\infty)$ according to Lemma \ref{converse}.
Integrating twice we get for any $s\geq t_n$
$$ \left(\widetilde v^{s}_j - b^s_{j,v}\right)= \widetilde v^{t_n}_j +\int_{t_n}^s \overline k^N(\widetilde x^\tau_j) d\tau$$ and
$$ \left(\widetilde x^{t}_j - b^t_{j,x}\right)=\widetilde x_j^{t_n}+\int_{t_n}^t \left(\widetilde v^{t_n}_j +\int_{t_n}^s \overline k^N(\widetilde x^\tau_j) d\tau\right) ds\;.$$
And by the same argument one has 
\begin{align*}
\widetilde x^{t}_1-\widetilde x^{t}_j-(a^t_{1,x}-b^t_{j,x})=\widetilde x_1^{t_n}-\widetilde x_j^{t_n}+ 
\int_{t_n}^t \left(\widetilde v^{t_n}_1-\widetilde v^{t_n}_j+\int_{t_n}^s \overline k^N(\widetilde x^\tau_1) d\tau -\int_{t_n}^s \overline k^N(\widetilde x^\tau_j) d\tau\right) ds\;.
\end{align*}
Since $\overline k^N(\widetilde x^\tau_j)$ is bounded by  $\norm{ \overline k^N}_\infty\leq C(\norm{\rho^N}_1+\norm{\rho^N}_\infty)$ according to Lemma \ref{converse}., it follows that there is a constant $0<C<\infty$  depending only on $\norm{\overline k^N}_\infty$ such that
\begin{align}\label{esta}
\left|\widetilde x^{t}_1-\widetilde x^{t}_j\right|\geq \left|\widetilde x_1^{t_n}-\widetilde x_j^{t_n}+(t-t_n)(\widetilde v^{t_n}_1-\widetilde v^{t_n}_j)\right|-\left|a^t_{1,x}-b_{j,x}^t\right|-C\Delta t^2, \mbox{ for all } t\in[t_n,t_{n+1}].
\end{align}

For $j\in (M_{t_n}^t)^c$ for some $t\in[t_n,t_{n+1}]$, i.e.
\begin{equation}\label{161}
\left|\widetilde x_1^{t_n}-\widetilde x_j^{t_n}+(t-t_n)(\widetilde v^{t_n}_1-\widetilde v^{t_n}_j)\right|\geq N^{-\lambda_2}+\log (N) \Delta t^{\frac{3}{2}}
\end{equation}
 together with $\min\limits_{t\in[t_n,t_{n+1}]}\max\limits_{j\in( M^ t_{t_n})^c}\left\{\left|\widetilde x_{1}^t-\widetilde x_{j}^t\right|\right\}<N^{-\lambda_2}$,
\eqref{esta} and \eqref{161} imply
$$\max_{t\in[t_n,t_{n+1}]}\min_{j\in( M^ t_{t_n})^c}\left\{\left|a^t_{1,x}-b_{j,x}^ t\right|\right\}>(\ln N-C)  \Delta t^{\frac{3}{2}}.$$
 Hence
\begin{align}\label{min1}\nonumber
&\PP\left(\min_{t\in[t_n,t_{n+1}]}\max_{j\in( M^ t_{t_n})^c}\left\{\left|\widetilde x_{1}^t-\widetilde x_{j}^t\right|\right\}<N^{-\lambda_2}\;\;\right) \notag\\
\leq&\PP \left(\max_{t\in[t_n,t_{n+1}]}\min_{j\in( M^ t_{t_n})^c}\left\{\left|a^t_{1,x}-b_{j,x}^ t\right|\right\}>(\ln N-C)  \Delta t^{\frac{3}{2}}\right) \notag
\\\leq&\PP \left(\max_{t\in[t_n,t_{n+1}]}\min_{j\in( M^ t_{t_n})^c}\left\{\left|a^t_{1,v}-b_{j,v}^ t\right|\right\}>(\ln N-C)  \Delta t^{\frac{1}{2}}\right),
\end{align}
where we used $a_x^ t=\int_{t_n}^ta_v^sds$ and $b_x^ t=\int_{t_n}^tb_v^sds$ in the second inequality. In the same way we can argue that
\begin{align}\label{min2}
&\PP\left(\min_{t\in[t_n,t_{n+1}]}\max_{j\in(\overline M^ t_{t_n})^c}\left\{\left|\overline x_{1}^t-\overline x_{j}^t\right|\right\}<N^{-\lambda_2}\;\;\right)\notag\\
\leq&\PP \left(\max_{t\in[t_n,t_{n+1}]}\min_{j\in(\overline M^ t_{t_n})^c}\left\{\left|a^t_{1,v}-b_{j,v}^ t\right|\right\}>(\ln N-C) \Delta t^{\frac{1}{2}}\right)\;.
\end{align}

Due to independence the difference $c_{j,v}^t=(c_{j,1}^t,c_{j,2}^t,c_{j,3}^t)=a_{1,v}^t-b_{j,v}^t$ is itself a Wiener process \cite{wiener1923} since
\begin{equation}
dc_{j,v}^t=d (a_{1,v}^t-b_{j,v}^t)= d(B_1^t-B_j^t).
\end{equation}
 Splitting up this Wiener process into its three spacial components we get
\begin{align}\label{estc}
&\PP \left(\max_{t\in[t_n,t_{n+1}]}\min_{j\in( M^ t_{t_n})^c}\left\{\left|a^t_{1,v}-b_{j,v}^ t\right|\right\}>(\ln N-C) \Delta t^{\frac{1}{2}}\right) \notag\\
\leq& 3\PP \left(\max_{t\in[t_n,t_{n+1}]}\min_{j\in( M^ t_{t_n})^c}\left\{\left|c^ t_{j,1}\right|\right\}>(\ln N-C) \Delta t^{\frac{1}{2}}\right)\notag\\
\leq &6\PP \left(\max_{t\in[t_n,t_{n+1}]} \min_{j\in( M^ t_{t_n})^c}\left\{c^ t_{j,1}\right\} >(\ln N-C) \Delta t^{\frac{1}{2}}\right)\notag\\
=&12 \PP \left( \min_{j\in( M^ t_{t_n})^c} \left\{c^ {t_{n+1}}_{j,1}\right\}>(\ln N-C) \Delta t^{\frac{1}{2}}\right)\;.
\end{align}
 where in the last equality we used the reflection principle based on the Markov property \cite{Levy40}.

Recall that the time evolution of $a_{1,v}^ t$ and $b_{j,v}^t$ are standard Brownian motions, i.e. the density is a Gaussian with standard deviation  $\sigma_t=\sigma(t-t_n)^{\frac{1}{2}}$. Due to the independence of $a^t_{1,v}$ and $b^t_{j,v}$,  $c_{j,1}^t$ is also normal distributed with the standard deviation of order $(t-t_n)^{\frac{1}{2}}$. Hence for $N$ sufficiently large, following from \eqref{estc}, it holds that
$$\PP \left(\max_{t\in[t_n,t_{n+1}]}\min_{j\in( M^ t_{t_n})^c}\left\{\left|a^t_{1,v}-b_{j,v}^ t\right|\right\}>(\ln N-C) \Delta t^{\frac{1}{2}}\right)\leq N^{-\alpha}\,$$
and 
$$\PP \left(\max_{t\in[t_n,t_{n+1}]}\min_{j\in(\overline M^ t_{t_n})^c}\left\{\left|a^t_{1,v}-b_{j,v}^ t\right|\right\}>(\ln N-C) \Delta t^{\frac{1}{2}}\right)\leq N^{-\alpha}\,$$
With \eqref{min1} and \eqref{min2} the lemma follows.

\end{proof}

Now we have all the estimates needed for the proof of Lemma  \ref{tildaXficedtime}.
\begin{proof}[Proof of Lemma \ref{tildaXficedtime}]
 We show that under the event $\mathcal{A}_T$ defined in \eqref{eventA}, for any $\alpha>0$ there exists a $C_\alpha$ depending only on $\alpha$, $T$ and $C_{f_0}$ such that at any fixed time $t\in[t_n,t_{n+1}]$

\begin{align}\label{goal}
	\PP\bigg(&\left|  \frac{1}{N-1}\sum_{j\neq 1}^N\left(k_1^N(\widetilde x_1^t-\widetilde x_j^t)-  k_1^N(\overline x_1^t-\overline x_j^t)\right)\right|\notag\\
	&\geq C_{\alpha}N^{2\delta-1}\log(N) +C_\alpha\log^2(N)N^{3\lambda_1-\lambda_2}\|k_1^N\|_1\bigg)\leq N^{-\alpha}.
	\end{align}

This is done under the event $\mathcal{A}_T$ in three steps:
\begin{itemize}
\item[(1)] We prove that for any $t\in [t_n,t_{n+1}]$ the number of particles inside $M_{t_n}^t$ is larger than 
\begin{equation}\label{defM}
M_\ast:= 2C_\ast  N\left(3N^{-\lambda_2}+ \log (N) \Delta t^{3/2}\right)^2 
\end{equation}
with probability less than $N^{-\alpha}$. Note that $M_\ast$ is used  as a bound in the definition of  \eqref{Stn} and \eqref{-Stn}. For any $t\in [t_n,t_{n+1}]$ we prove that
\begin{equation}\label{a}
\PP\left(\mbox{card }(M_{t_n}^t)> M_\ast\right)=\PP((\mathcal{S}_{t_n}^t)^c)\leq\PP((\overline{\mathcal{S}}_{t_n}^t)^c)\leq N^{-\alpha}.
\end{equation}

\item[(2)] We prove that at any fixed time $t\in [t_n,t_{n+1}]$, particles outside $M_{t_n}^t$ contribute to the interaction of $k_1^N$ with probability  less than $N^{-\alpha}$, namely
 \begin{align}\label{b}
 \PP\bigg(&\left|  \frac{1}{N-1}\sum_{j\in (M_{t_n}^t)^c}\left(k_1^N(\widetilde x_1^t-\widetilde x_j^t)-  k_1^N(\overline x_1^t-\overline x_j^t)\right)\right|> 0\bigg)\leq N^{-\alpha}.
 \end{align}
\item[(3)]  According to step $(2)$ above, at any fixed time $t\in [t_n,t_{n+1}]$, particles outside $M_{t_n}^t$ do not contribute to the interaction of $k_1^N$ with high probability, so we only consider particles that are inside $M_{t_n}^t$. And we know already from step $(1)$ above that the number of particles inside  $M_{t_n}^t$ is larger than $M_\ast$, with low probability. To prove \eqref{goal}, we  only need to  prove
\begin{align}\label{c}
\PP\bigg(	\mathcal{X}(M_{t_n}^t)\cap\left\{\mbox{ card }(M_{t_n}^t)\leq M_\ast\right\}\bigg)\leq N^{-\alpha}\,
	\end{align}
at any fixed time $t\in [t_n,t_{n+1}]$,	where the event $\mathcal{X}(M_{t_n}^t)$ is defined by
	\begin{align}\label{eventX}
	\mathcal{X}(M_{t_n}^t):=\bigg\{&\bigg|\frac{1}{N-1}\sum_{j\in M_{t_n}^t}\left(k_1^N(\widetilde x_1^t-\widetilde x_j^t)-  k_1^N(\overline x_1^t-\overline x_j^t)\right)\bigg|\notag \\
	&\geq C_{\alpha}N^{2\delta-1}\log(N) +C_\alpha\log^2(N)N^{3\lambda_1-\lambda_2} \|k_1^N\|_1\bigg\}.
	\end{align}
\end{itemize}

\begin{figure}[pb]\label{figure}
	\centering
	\includegraphics[width=0.99\textwidth]{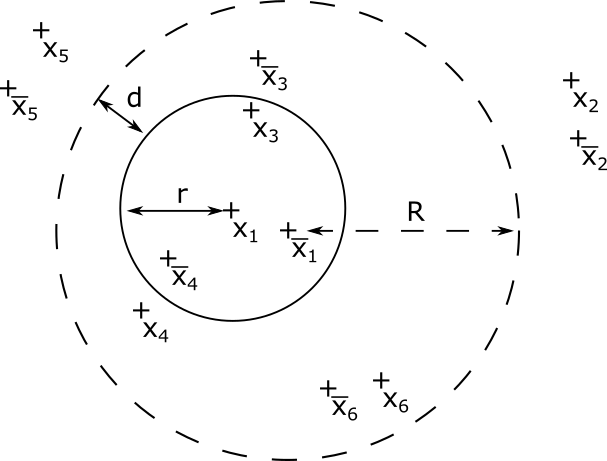}
	\caption{
		 Illustration of the sets $M_{t_n}^t$ and $\overline{M}_{t_n}$ under the assumption that $\A_T$ holds: the set  
		$M_{t_n}^t$ contains all indices of particles with respect to $X$ which  
		are in the ball of radius $r=N^{-\lambda_2}+ \log (N) (\Delta t)^{3/2}$  
		around $x_1$. In the figure this is the ball with solid lines and  
		$M_{t_n}^t=\{1,3\}$.
		The set $\overline M_{t_n}^t$ contains all indices of particles  with  
		respect to $\overline X$ which are in the ball of radius  
		$R=3N^{-\lambda_2}+ \log (N) (\Delta t)^{3/2}$ around $\overline x_1$. In  
		the figure this is the ball with  dashed lines and $\overline  
		M_{t_n}^t=\{1,3,4,6\}$.
		Since on the set $\mathcal{A}_T$  the distance $d$ of the particles  
		$x_1$ and $\overline x_1$ cannot be larger than $N^ {-\lambda_2}$, it  
		follows that, given that the event $\mathcal{A}_T$ holds, a particle $\overline x_j$ is in the solid ball only if  
		the particle $ x_j$ is in the ball with dashed lines, i.e. with radius  
		$R=3N^{-\lambda_2}+ \log (N) (\Delta t)^{3/2}$ around $x_1$ (see for  
		example particles $x_3$ and $\overline x_3$). Thus $M_{t_n}^t\subseteq  
		\overline{M}_{t_n}$.
		Controlling $M_{t_n}^t$ by $\overline{M}_{t_n}$ will be helpful to  
		estimate the number of particles inside these sets. The $\overline  
		x_j$ are distributed independently, and the probability of finding any of  
		these $\overline x_j$ inside the solid ball is small due to the small  
		volume of the ball. This helps to estimate the number of particles in  
		the set $\overline{M}_{t_n}$ (see \textit{Step} 1).
		Particles outside the ball, i.e. indices not in $\overline{M}_{t_n}$  
		do  not contribute to the interaction $k_1$. This comes  
		from the fact that in order to get a sufficiently small distance for 
		$x_1$ to interact, they have to travel a long distance during the  
		short time interval $(t-t_n)$: the distance $\log (N) (\Delta t)^{3/2}$  
		(recall that the support of $k_1$ has radius $N^{-\lambda_2}$). Due to  
		the Brownian motion, this is possible, of course, but the probability  
		to travel that far will be smaller than any polynomial in $N$. This  
		argument is worked out in \textit{Step} 2.
		The main contribution thus comes from \textit{Step} 3. Knowing that the number  
		of particles in $M_{t_n}$ is quite small helps to estimate this term. 
	}

\end{figure}

$\bullet \textit{Step 1:}$
To prove the first part of  \eqref{a}, note that on the event $\mathcal A_T$  defined in \eqref{eventA} and assuming that $t\in[t_n,t_{n+1}]$

$$\left| x_1^{t_n}- x_j^{t_n}+(t-t_n)(v_1^{t_n}- v_j^{t_n})\right|\leq N^{-\lambda_2}+ \log (N) \Delta t^{\frac{3}{2}}\, $$
implies
$$\left| \overline x_1^{t_n}- \overline x_j^{t_n}+(t-t_n)(\overline v_1^{t_n}- \overline v_j^{t_n})\right|\leq 3N^{-\lambda_2}+ \log (N) \Delta t^{\frac{3}{2}}\;.$$ 

Hence $M_{t_n}^t\subseteq \overline M_{t_n}^t$ and thus for any $R>0$, $\mbox{card }(\overline M_{t_n}^t)<R$ implies that
$\mbox{card } (M_{t_n }^t)\leq \mbox{card } (\overline M_{t_n }^t)<R$, consequently
 $\mathcal{S}_{t_n}^t\supseteq\overline{\mathcal{S}}_{t_n}^t$, i.e. $(\mathcal{S}^ t_{t_n})^c\subseteq(\overline{\mathcal{S}}^t_{t_n})^c$ .

The second part of \eqref{a} is trivial. For the third part we use the independence of the $\overline x$-particles. Note that the law of $(\overline x^j_{t_n}, \overline v^j_{t_n})$ has a density $f^N(x,v,t_n)$.
For any $j\in\left\{2,\ldots,N\right\}$ the probability to find $j\in \overline M_{t_n}^t$  for any $t\in[t_n,t_{n+1}]$ is given by
\begin{align}
\PP\left(j\in \overline M_{t_n}^t\right)&=\int_{\RR^3} \int_{B_{R}(\Xi^t)} f^N(x,v,t_n)dx dv,
\end{align}
where the center $\Xi^t$ of the ball is given by $\Xi^t=\overline x_1^{t_n}+(t-t_n)(\overline v_1^{t_n}- v)$, and the radius of the ball is given by $R=3N^{-\lambda_2}+ \log (N) \Delta t^{3/2} $.

	Define 
\begin{equation}\label{defgN}
g^N(x,v,s):=f^N(x-vs,v,t_n)
\end{equation}
which	 then satisfies the following transport equation
	\begin{equation}
	\begin{cases}
	\partial_sg^N(x,v,s)+v\cdot \nabla_x g^N(x,v,s)=0,~ 0<s\leq \Delta t,\\
	g^N(x,v,0)=f^N(x,v,t_n).
	\end{cases}
	\end{equation}

Then one has
\begin{equation}
\int_{B_{R}(\Xi^t)} f^N(x,v,t_n)dx=\int_{B_{R}( \Xi_0^t)} g^N(x,v,t-t_n)dx,
\end{equation}
where the center $\Xi_0^t$ of the ball is given by $\Xi_0^t=\overline x_1^{t_n}+(t-t_n)\overline v_1^{t_n}$, in particular the integration area is independent of $v$. It follows that the probability of finding $j\in\overline M_{t_n}^t$ for any $t\in[t_n,t_{n+1}]$ is equivalent to
	\begin{equation}\label{gfunc}
\PP\left(j\in  \overline M_{t_n}^t\right)=\int_{\RR^3} \int_{B_{R}( \Xi_0^t)} g^N(x,v,t-t_n)dx dv.
	\end{equation}
	
	Next, we compute for $0<s\leq \Delta t$
	\begin{align}
	\bar \rho^N (x,s)&:=\int_{\RR^3}g^N(x,v,s)dv=\int_{|v|\leq r(s)}g^N(x,v,s)dv+\int_{|v|> r(s)}g^N(x,v,s)dv \notag\\
	&\leq C_1\norm{g^N(\cdot,\cdot,s)}_\infty r(s)^3+\frac{1}{r(s)^6} \int_{|v|> r(s)}|v|^6g^N(x,v,s)dv\notag\\
	&= 2C_1^{\frac{2}{3}}\norm{g^N(\cdot,\cdot,s)}_\infty^{\frac{2}{3}} \left(\int_{|v|> r(s)}|v|^6g^N(x,v,s)dv\right)^{\frac{1}{3}},
	\end{align}
	where we have chosen
	\begin{equation}
	r(s)=\left(\frac{\int_{|v|> r(s)}|v|^6g^N(x,v,s)dv}{C_1\norm{g^N(\cdot,\cdot,s)}_\infty}\right)^{\frac{1}{9}}.
	\end{equation}
	 It follows that
	\begin{align}
	\int_{\RR^3}|\bar \rho ^N(x,s)|^3dx& \leq 8C_1^2\norm{g^N(\cdot,\cdot,s)}_\infty^2\iint_{\RR^6}|v|^6g^N(x,v,s)dxdv\notag\\
	&=8C_1^2\norm{f^N(x-vs,v,t_n)}_\infty^2\iint_{\RR^6}|v|^6f^N(x-vs,v,t_n)dxdv\notag\\
	&\leq C\left(\|f^N(\cdot,\cdot,t_n)\|_{L^{\infty}(\RR^6)},~
\||v|^{6}f^N(\cdot,\cdot,s)\|_{L^1(\RR^6)}\right),
	\end{align}
which leads to
	\begin{equation}
	\max\limits_{s\in[0,\Delta t]}\norm{\bar \rho^N(\cdot,s)}_{3}\leq C_2,
	\end{equation}
	because of \eqref{regularity}, where $C_2$ depends only on $T$, and $C_{f_0}$.  It follows from \eqref{gfunc} that
\begin{align}
\PP\left(j\in \overline M_{t_n}^t\right)&=\int_{B_{R}(\Xi_0^t)} \bar \rho^N(x,t-t_n)dx\leq  \norm{\bar \rho^N}_{3} |B_{R}(\Xi_0^t)| ^{\frac{2}{3}}\notag\\
&\leq C_2(\frac{4}{3}\pi) ^{\frac{2}{3}}\left(3N^{-\lambda_2}+ \log (N) \Delta t^{\frac{3}{2}}\right)^2\notag \\
&=C_\ast\left(3N^{-\lambda_2}+ \log (N) \Delta t^{3/2}\right)^2 =:p\;,
\end{align}
where we define $C_\ast:=C_2(\frac{4}{3}\pi) ^{\frac{2}{3}}$, which depends only on $T$ and $C_{f_0}$.

The probability of finding $k$ particles inside the set $\overline M_{t_n}^t$ is thus bounded from above by the binomial probability mass function with parameter $p$ at position $k$, i.e. for any natural number $0\leq A\leq N$ and any $t\in [t_n,t_{n+1}]$
	$$\PP\left(\mbox{card }(\overline M_{t_n}^t)\geq A\right)\leq\sum_{j=A}^ N \begin{pmatrix}
	N  \\
	j 
	\end{pmatrix}
	p^j (1-p)^{N-j}.$$
	Binomially distributed random variables have mean $Np$ and standard deviation $\sqrt{Np(1-p)}<\sqrt{Np}$, and the probability to find more than $Np+ a \sqrt{Np}$ particles in the set $\overline M_{t_n}^t$ is exponentially small in $a$, i.e. there is a sufficiently large $N$ for any $\alpha>0$ and any $t\in [t_n,t_{n+1}]$ such that
	$$\PP\left(\mbox{card }(\overline M_{t_n}^t)\geq Np+a\sqrt{Np}\right)\leq a^{-\alpha}\;.$$
	 This is because of  the central limit theory and so the binomial distribution can be seen as a normal distribution when $N$ is sufficiently large. 
	Since $p\geq C N^{-3\lambda_2}$, we get that $\sqrt{Np}>C N^ {\frac{1}{2}(1-3\lambda_2)}$ $(\lambda_2<1/3)$.  Hence the probability of finding more than $2Np=Np+\sqrt{Np}\sqrt{Np}$ (i.e. $a=\sqrt{Np}>C N^ {\frac{1}{2}(1-3\lambda_2)}$) particles is the set $\overline M_{t_n}^t$  is smaller than any polynomial in $N$, i.e. there is a $C_\alpha$ for any $\alpha>0$ and any $t\in [t_n,t_{n+1}]$ such that
	$$\PP((\overline{\mathcal{S}}_{t_n}^t)^c)=\PP\left(\mbox{card }(\overline M_{t_n}^t)\geq 2Np\right)\leq N^{-\alpha}.$$

$\bullet \textit{Step 2:}$
For \eqref{b} it is sufficient to show that for any $\alpha>0$ there is a sufficiently large $N$  such that for some $j\in (M_{t_n}^t)^c$
$$\PP\bigg(\max\limits_{t\in[t_n,t_{n+1}]}\left| \frac{1}{N-1} \left(k_1^N(\widetilde x_1^t-\widetilde x_j^t)-  k_1^N(\overline x_1^t-\overline x_j^t)\right)\right|> 0\bigg)\leq N^{-\alpha}.$$
The total probability we have to control in \eqref{b} is at maximum the $N$-fold value of this. The key to prove that is Lemma \ref{unlikely}. To have an interaction $k_1^N(\widetilde x_1^t-\widetilde x_j^t)\neq 0$ for all $t\in[t_n,t_{n+1}]$ the distance between particle $1$ and particle $j$ has to be reduced to a value smaller than $N^ {-\lambda_2}$. Due to the Brownian motion, this is possible, but suppressed. Due to the fast decay of the Gaussian it is very unlikely that $k_1^N(\widetilde x_1^t-\widetilde x_j^t)\neq 0$. The probability is smaller than any polynomial in $N$ (see Lemma \ref{unlikely}).The same holds true for $k_1^N(\overline x_1^t-\overline x_j^t)$.

In more detail: due to the cut-off $N^{-\lambda_2}$ we introduced for $k_1^N$ 

\begin{align*}
&\PP\bigg( \max\limits_{t\in[t_n,t_{n+1}]}\left|  \frac{1}{N-1}\sum_{j\in (M_{t_n}^t)^c}\left(k_1^N(\widetilde x_1^t-\widetilde x_j^t)-  k_1^N(\overline x_1^t-\overline x_j^t)\right)\right|> 0\bigg)
\\\leq&
\PP\bigg(\max\limits_{t\in[t_n,t_{n+1}]}\left|  \frac{1}{N-1}\sum_{j\in (M_{t_n}^t)^c}k_1^N(\widetilde x_1^t-\widetilde x_j^t) \right|> 0\bigg)
\notag\\
&+
\PP\bigg(\max\limits_{t\in[t_n,t_{n+1}]}\left|  \frac{1}{N-1}\sum_{j\in (M_{t_n}^t)^c}   k_1^N(\overline x_1^t-\overline x_j^t)\right|> 0\bigg)
\\\leq& N\PP\left(\min_{t\in[t_n,t_{n+1}]}\max_{j\in( M^ t_{t_n})^c}\left\{\left|\widetilde x_{1}^t-\widetilde x_{j}^t\right|\right\}<N^{-\lambda_2}\;\;\right)\\
&+N\PP\left(\min_{t\in[t_n,t_{n+1}]}\max_{j\in(\overline M^ t_{t_n})^c}\left\{\left|\widetilde x_{1}^t-\widetilde x_{j}^t\right|\right\}<N^{-\lambda_2}\;\;\right),
\end{align*}
where we used the fact that $(\overline M^ t_{t_n})^c\subseteq ( M^ t_{t_n})^c$ in the last inequality.
With Lemma \ref{unlikely} we get the bound for \eqref{b}.

$\bullet \textit{Step 3:}$
To get \eqref{c} we prove that  for any natural number
\begin{align*}
0\leq M&\leq M_\ast=2C_\ast  N\left(3N^{-\lambda_2}+ \log (N) \Delta t^{3/2}\right)^2 
\end{align*}
one has
\begin{align}\PP\big(\mathcal{X}(M_{t_n}^t)\cap\{\mbox{ card }(M_{t_n}^t)=M\}\big)\leq N^{-\alpha},
\end{align}	
where the event $\mathcal{X}(M_{t_n}^t)$ is defined in \eqref{eventX}.
This can be recast without relabeling $j$ as
\begin{align}\label{toshow}
\PP\bigg(&\left|\frac{1}{N-1}\sum_{j=1}^{M}\left(k_1^N(\widetilde x_1^t-\widetilde x_j^t)-  k_1^N(\overline x_1^t-\overline x_j^t)\right)\right|\notag \\
&\geq C_{\alpha}N^{2\delta-1}\log(N)
+C_\alpha\log^2(N)N^{3\lambda_1-\lambda_2} \|k_1^N\|_1\bigg)\leq N^{-\alpha}.
\end{align}

\begin{lem}\label{central2} 
	Let $Z_1,\cdots,Z_M$ be independent random variables with $\mathbb{E}[|Z_i|]\leq CM^{-2}$
	and $|Z_i|\leq C$ for any $i\in\{1,\cdots,M\}$. Then for any $\alpha>0$, it holds that
	\begin{equation}
	\PP\left(\sum_{i=1}^{M}|Z_i|\geq C_\alpha \ln(M)\right)\leq M^{-\alpha},
	\end{equation}
	where $C_\alpha$ depends only on $C$ and $\alpha$.
\end{lem}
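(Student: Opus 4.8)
\textbf{Proof proposal for Lemma~\ref{central2}.} The plan is to prove it by a Chernoff-type (exponential Markov) argument, in the same spirit as the proof of Lemma~\ref{central} quoted above. Write $S:=\sum_{i=1}^{M}|Z_i|$ and $Y_i:=|Z_i|\in[0,C]$, so that $p_i:=\mathbb{E}[Y_i]\leq CM^{-2}$ and hence $\sum_{i=1}^M p_i\leq CM^{-1}$. For every $t>0$, applying Markov's inequality to $e^{tS}$ and using independence gives
\[
\PP(S\geq s)\leq e^{-ts}\prod_{i=1}^{M}\mathbb{E}\!\left[e^{tY_i}\right].
\]

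The first step is to bound each factor. By convexity of $y\mapsto e^{ty}$ on $[0,C]$ and the a.s.\ bound $Y_i\in[0,C]$, the secant-line inequality gives the pointwise estimate $e^{tY_i}\leq 1+\frac{e^{tC}-1}{C}\,Y_i$; taking expectations and then using $1+x\leq e^{x}$ yields $\mathbb{E}[e^{tY_i}]\leq\exp\bigl(\tfrac{e^{tC}-1}{C}\,p_i\bigr)$. Multiplying over $i$ and inserting $\sum_i p_i\leq CM^{-1}\leq C$ we obtain
\[
\prod_{i=1}^{M}\mathbb{E}\!\left[e^{tY_i}\right]\leq\exp\!\left(\frac{e^{tC}-1}{C}\sum_{i=1}^{M}p_i\right)\leq\exp\!\left(e^{tC}-1\right),
\]
a constant depending only on $t$ and $C$.

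Next I would fix $t:=1/C$, so that $e^{tC}-1=e-1$, and take $s:=C_\alpha\ln M$. Combining the two displays gives
\[
\PP\!\left(S\geq C_\alpha\ln M\right)\leq M^{-C_\alpha/C}\,e^{e-1}.
\]
Choosing $C_\alpha:=C(\alpha+1)$ makes the right-hand side at most $e^{e-1}M^{-(\alpha+1)}$, which is $\leq M^{-\alpha}$ as soon as $M\geq e^{e-1}$ (say $M\geq 6$). For the remaining small values of $M$ one argues deterministically: if $M$ is bounded then $S\leq MC$, so enlarging $C_\alpha$ further (so that $C_\alpha\ln 2\geq 5C$, say) forces $\PP(S\geq C_\alpha\ln M)=0$ for $2\leq M\leq 5$, while $M=1$ is trivial since $\ln 1=0$. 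Thus the conclusion holds with $C_\alpha:=\max\{C(\alpha+1),\,5C/\ln 2\}$, which depends only on $C$ and $\alpha$.

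I do not expect a genuine obstacle here; the only points needing care are the a.s.\ bound $0\leq Y_i\leq C$ underlying the convexity estimate, and the trivial treatment of small $M$. The essential mechanism is simply that $\mathbb{E}[S]=\sum_i p_i\leq CM^{-1}$ is tiny, so the threshold $C_\alpha\ln M$ lies deep in the tail, and a single exponential moment converts this into the asserted polynomial decay in $M$.
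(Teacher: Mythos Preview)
Your proof is correct and, in fact, cleaner than the paper's. The two arguments take genuinely different routes.

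The paper proceeds by a truncation: it splits each $Z_i$ at a level $\gamma$ chosen so that $\PP(|Z_i|>\gamma)=M^{-1}$ (Markov's inequality then forces $\gamma\leq CM^{-1}$). The small parts $Z_i^b$ sum trivially to at most $C$, while the large parts are bounded by $C$ times a sum of independent Bernoulli$(M^{-1})$ indicators; the tail of this Bernoulli sum is then estimated combinatorially via $\binom{M}{j}M^{-j}\leq 1/j!$ and Stirling-type lower bounds on the factorial at $j\sim\ln M$.

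You bypass the truncation entirely by computing a single exponential moment. The secant-line inequality $e^{tY_i}\leq 1+\tfrac{e^{tC}-1}{C}Y_i$ is exactly the right tool here, since it converts the first-moment bound $\sum p_i\leq CM^{-1}$ directly into a uniform bound on $\mathbb{E}[e^{tS}]$, after which one application of Markov at $t=1/C$ gives the polynomial decay. This is shorter, avoids the slightly delicate choice of $\gamma$ (which in the paper tacitly assumes atomless distributions or requires randomization), and yields an explicit constant $C_\alpha$. The paper's approach, on the other hand, makes the underlying ``Poissonization'' picture more transparent: what is really happening is that at most $O(\ln M)$ of the $Z_i$ can be non-negligible, which is morally why the sum concentrates so tightly.
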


\begin{proof}
	We first split the random variables $Z_i=Z_i^a+Z_i^b$ such that
	 $Z_i^a$ and $Z_i^b$ are sequences of independent random variables with
	$$\mathbb{P}(|Z_i^a|>0)= M^{-1}\: \text{and } |Z_i^b|\leq CM^{-1}.$$
	This can be achieved by defining 
	$$Z_i^a(\omega)=\begin{cases} Z_i(\omega) &\mbox{if } Z_i(\omega)>\gamma,\\ 
	0 & \mbox{else }. \end{cases}$$
	and $Z_i^b=Z_i-Z_i^a$. Here we choose $\gamma$ such that $\mathbb{P}(|Z_i^a|>0)= M^{-1}$. Applying Markov's inequality, one computes
   \begin{align}
   M^{-1}=\mathbb{P}(|Z_i^a|>0)=\PP (Z_i>\gamma)\leq\PP\left(|Z_i|>\frac{\gamma}{\mathbb{E}[|Z_i|]}\mathbb{E}[|Z_i|]\right)\leq \frac{\mathbb{E}[|Z_i|]}{\gamma}\leq C\frac{M^{-2}}{\gamma}.
   \end{align}
This implies that $\gamma\leq CM^{-1}$.

	For the sum of $Z_i^b$ we get the trivial bound
	$$\sum_{j=1}^M |Z_i^b|\leq CM^{-1} M=C\;.$$
	Thus the lemma follows if we can show that 
	\begin{equation}\label{Za}
	\PP\left(\sum_{i=1}^M|Z_i^a|\geq C_\alpha \ln(M)\right)\leq N^{-\alpha},
	\end{equation}
	where $C_\alpha$ has been changed.
	
	Let 	$$X_i(\omega)=\begin{cases} 0 &\mbox{if } Z_i^a(\omega)=0, \\ 
	1 & \mbox{else }. \end{cases}$$
	Since $|Z_i|\leq C$, one has
	\begin{equation}
	\sum_{i=1}^M|Z_i^a|=	\sum_{i=1}^MX_i|Z_i|\leq C\sum_{i=1}^MX_i.
	\end{equation}
	Then it follows that  $$\sum_{i=1}^M|Z_i^a|\geq C_\alpha \ln (M)\Longrightarrow\sum_{j=1}^M X_i\geq \frac{C_\alpha}{C}\ln (M).$$
	Noticing that  $X_i$ are i.i.d.  Bernoulli random variables with $\PP(X_i=1)=\mathbb{P}(|Z_i^a|>0)= M^{-1}$, we get
	\begin{align*}
		&\PP\left(\sum_{i=1}^M|Z_i^a|\geq C_\alpha \ln (M)\right)\leq \PP\left(\sum_{j=1}^M X_i\geq \frac{C_\alpha}{C}\ln (M)\right)
		\\
		= &\sum_{j=a}^M \frac{M!}{j!(M-j)!} M^{-j}(1-M^{-1})^{M-j}
		\leq \sum_{j=a}^M \frac{M^j}{j!} M^{-j}
		\leq \frac{2}{a!}\,,
	\end{align*}
	where $a=\frac{C_\alpha}{C}\ln(M)$.  Notice the decay property of the factorial  
		\begin{align}
	(\ln(M))!\geq \left(\frac{\ln(M)}{2}\right)^{\frac{\ln(M)}{2}}=\exp\left(\frac{\ln(M)}{2}\ln\left(\frac{\ln(M)}{2}\right)\right)=M^{\frac{1}{2}\ln\left(\frac{\ln(M)}{2}\right)}.
	\end{align}
	Thus one chooses $M$ large enough and concludes \eqref{Za}, which proves the lemma.

\end{proof}

Using the lemma above, now we proceed to prove \eqref{toshow}. Define $$Z_j:=N^{-2\delta }\left(k_1^N(\widetilde x_1^t-\widetilde x_j^t)-  k_1^N(\overline x_1^t-\overline x_j^t)\right).$$ 
 It follows that $|Z_j|$ is bounded and 
\begin{align*}
\mathbb{E}(|Z_j|)\leq CN^{-2\delta }\norm{k_1^N}_1\begin{cases}
\|u_{t,t_{n-1}}^{a,N}\|_{\infty,1},~&\mbox{ for }1\leq n\leq M'\,\\
\|f_t^N\|_{\infty,1},~&\mbox{ for } n=1\,
\end{cases}\leq C N^{\frac{9}{2} \lambda_1-2\delta-\lambda_2}\,,
\end{align*}
where we use the fact $\|u_{t,t_{n-1}}^{a,N}\|_{\infty,1}\leq CN^{\frac{9}{2}\lambda_1}$ $(\Delta t\leq t-t_{n-1}\leq 2\Delta t)$ from $(i)$ in Lemma \ref{transition},  $\|f_t^N\|_{\infty,1}\leq C_{f_0}$ and $\norm{k_1^N}_1\leq N^{-\lambda_2}$ from \eqref{k1est}. 
Using Lemma \ref{central2}  with $M=N^{\delta+\frac{\lambda_2}{2}-\frac{9}{4}\lambda_1}$ one obtains
\begin{align}
\PP\left(\frac{1}{N-1}\sum_{j=1}^{N^{\delta+\frac{\lambda_2}{2}-\frac{9}{4}\lambda_1}}\left|\left(k_1^N(\widetilde x_1^t-\widetilde x_j^t)-  k_1^N(\overline x_1^t-\overline x_j^t)\right)\right|\geq C_\alpha N^{2\delta-1}\ln(N)\right)\leq N^{-\alpha},
\end{align}
which leads to \eqref{toshow} for $M=N^{\delta+\frac{\lambda_2}{2}-\frac{9}{4}\lambda_1}$.  It is obvious that
\begin{equation}
\sum_{i=1}^{M}|Z_i|\leq \sum_{i=1}^{N^{\delta+\frac{\lambda_2}{2}-\frac{9}{4}\lambda_1}}|Z_i|,
\end{equation}
for any $M\leq N^{\delta+\frac{\lambda_2}{2}-\frac{9}{4}\lambda_1}$.
Thus one concludes
\eqref{toshow} holds for the case  $M\leq N^{\delta+\frac{\lambda_2}{2}-\frac{9}{4}\lambda_1}$.

For the remaining $M$ we note that \begin{equation}
2C_\ast  N\left(3N^{-\lambda_2}+ \log (N)N^{-\frac{3}{2}\lambda_1}\right)^2\leq 4C_\ast  N \log^2 (N)N^{-3\lambda_1},
\end{equation}
due to the fact that $0<\lambda_1<\frac{2}{3}\lambda_2$. Thus we are left to prove \eqref{toshow}  
 for the case
  \begin{equation}\label{remainingM}N^{\delta+\frac{\lambda_2}{2}-\frac{9}{4}\lambda_1}<M\leq 4C_\ast  N \log^2 (N)N^{-3\lambda_1}\;.\end{equation}
This can be done by Lemma \ref{central}, which we repeat below for easier reference:
\paragraph{Lemma \ref{central}}
	Let $Z_1,\cdots,Z_{M}$ be $i.i.d.$ random variables with $\mathbb{E}[Z_i]=0,$ $\mathbb{E}[Z_i^2]\leq g(M)$
	and $|Z_i|\leq C\sqrt{M g(M)}$. Then for any $\alpha>0$, the sample mean $\bar{Z}=\frac{1}{M}\sum_{i=1}^{M}Z_i$ satisfies
	\begin{equation}
	\PP\left(|\bar{Z}|\geq\frac{C_\alpha \sqrt{g(M)}\log(M)}{\sqrt{M}}\right)\leq M^{-\alpha},
	\end{equation}
	where $C_\alpha$ depends only on $C$ and $\alpha$.

For any fixed $t\in[t_n,t_{n+1}]$ we choose $Z_j^t:=\frac{M}{N-1} k_1^N(\widetilde x_1^t-\widetilde x_j^t)-\frac{M}{N-1} \EE[k_1^N(\widetilde x_1^t-\widetilde x_j^t)]$ and $g(M):=CM N^{4\delta-2}$, where $N^{\delta+\frac{\lambda_2}{2}-\frac{9}{4}\lambda_1}<M\leq 4C_\ast  N \log^2 (N)N^{-3\lambda_1}$. Then following the same argument as in \eqref{Esqure}, the condition
 \begin{align*}
\mathbb{E}[(Z_j^t)^2]\leq&C\frac{M^2}{(N-1)^2} N^{\delta}\begin{cases}
\|u_{t,t_{n-1}}^{a,N}\|_{\infty,1},~&\mbox{ for }1\leq n\leq M'\,\\
\|f_t^N\|_{\infty,1},~&\mbox{ for } n=1\,
\end{cases}\leq C M^2 N^{\delta-2}N^{\frac{9}{2}\lambda_1}\leq g(M),
\end{align*}
is satisfied. We can also deduce that
\begin{align*}
|Z_j^t|\leq C\frac{M}{N-1}N^ {2\delta}\leq\sqrt{M (CM N^{4\delta-2})}=\sqrt{M g(M)}.
\end{align*}
Applying Lemma \ref{central} we obtain at any fixed time $t\in[t_n,t_{n+1}]$
\begin{equation}\label{part1}
\PP\left(\left|\frac{1}{N-1}\sum_{j=1}^{M}\left( k_1^N(\widetilde x_1^t-\widetilde x_j^t)-\mathbb{E}[k_1^N(\widetilde x_1^t-\widetilde x_j^t)]\right)\right|\geq C_\alpha N^ {2\delta-1}\log (N)\right)\leq  N^{-\alpha},
\end{equation}
and similarly
\begin{equation}\label{part2}
\PP\left(\left|\frac{1}{N-1}\sum_{j=1}^{M}\left( k_1^N(\overline x_1^t-\overline x_j^t)-\mathbb{E}[k_1^N(\overline x_1^t-\overline x_j^t)]\right)\right|\geq C_\alpha N^ {2\delta-1}\log (N)\right)\leq N^{-\alpha}\;.
\end{equation}
 

It is left to control the difference 
$$\left|\frac{1}{N-1}\sum_{j=1}^{M}\left(\EE[k_1^N(\widetilde x_1^t-\widetilde x_j^t)]-\EE[ k_1^N(\overline x_1^t-\overline x_j^t)]\right)\right|,$$ 
where $M$ satisfies \eqref{remainingM}. This can be done by using Lemma 
	\ref{transition}.
	For  any $t\in[t_n,t_{n+1}]$, when $1\leq n\leq M'$ we write $a=(\widetilde X_{t_{n-1}},\widetilde V_{t_{n-1}})=(X_{t_{n-1}},V_{t_{n-1}})$ and $b=(\overline X_{t_{n-1}},\overline V_{t_{n-1}})$. Then it follows that
	\begin{align}\label{transresult}
	&\left|\frac{1}{N-1}\sum_{j=1}^{M}\left(\EE[k_1^N(\widetilde x_1^t-\widetilde x_j^t)]-\EE[ k_1^N(\overline x_1^t-\overline x_j^t)]\right)\right|\notag\\
	=&\frac{1}{N-1}\bigg|\sum\limits_{j=1}^{M}\int k_1^N(x_1-x_j)\big(u^{a,1,N}_{t,t_{n-1}}(x_1,v_1)u^{a,j,N}_{t,t_{n-1}}(x_j,v_j)\notag \\
	& \qquad\qquad-u^{b,1,N}_{t,t_{n-1}}(x_1,v_1)u^{b,j,N}_{t,t_{n-1}}(x_j,v_j)\big)dx_1dv_1dx_jdv_j\bigg| \notag\\
	\leq& \frac{1}{N-1}\sum\limits_{j=1}^{M}\left|\int k_1^N(x_1-x_j)u^{a,1,N}_{t,t_{n-1}}(x_1,v_1)\left(u^{a,j,N}_{t,t_{n-1}}(x_j,v_j)-u^{b,j,N}_{t,t_{n-1}}(x_j,v_j)\right)dx_1dv_1dx_jdv_j\right|\notag \\
	&+\frac{1}{N-1}\sum\limits_{j=1}^{M}\left|\int k_1^N(x_1-x_j)u^{b,j,N}_{t,t_{n-1}}(x_1,v_1)\left(u^{a,1,N}_{t,t_{n-1}}(x_j,v_j)-u^{b,1,N}_{t,t_{n-1}}(x_j,v_j)\right)dx_1dv_1dx_jdv_j\right|\notag \\
	\leq&  \frac{1}{N-1}\sum\limits_{j=1}^{M} \left(\norm{u^{a,j,N}_{t,t_{n-1}}-u^{b,j,N}_{t,t_{n-1}}}_{\infty,1}\norm{k_1^N\ast\rho_{t,t_{n-1}}^{a,1,N}}_1+\norm{u^{a,1,N}_{t,t_{n-1}}-u^{b,1,N}_{t,t_{n-1}}}_{\infty,1}\norm{k_1^N\ast\rho_{t,t_{n-1}}^{b,j,N}}_1\right)\notag \\
	\leq& \frac{1}{N-1}\sum\limits_{j=1}^{M} C(t-s)^{-6}|a_i-b_i|(\norm{k_1^N}_1\norm{\rho_{t,t_{n-1}}^{a,1,N}}_1+\norm{k_1^N}_1\norm{\rho_{t,t_{n-1}}^{b,j,N}}_1)\notag \\
	\leq& C\log^2(N)N^{3\lambda_1}|a_i-b_i|\norm{k_1^N}_1\leq C\log^2(N)N^{3\lambda_1-\lambda_2}\norm{k_1^N}_1,
	\end{align}
	where $\rho_{t,t_{n-1}}^{a,1,N}(x_1)=\int_{\RR^3} u_{t,t_{n-1}}^{a,1,N}(x_1,v_1)dv_1$. Here we have  used the fact that when $1\leq n\leq M'$
	$$\norm{u^{a,j,N}_{t,t_{n-1}}-u^{b,j,N}_{t,t_{n-1}}}_{\infty,1}\leq C|a_i-b_i|((t-t_{n-1})^{-6}+1)\leq C|a_i-b_i|N^{6\lambda_1}$$
	by Lemma \ref{transition} since $N^{-\lambda_1}\leq t-t_{n-1}\leq 2N^{-\lambda_1}$.  When $n=1$, since $a=(\widetilde X_{0},\widetilde V_{0})=(X_{0},V_{0})=(\overline X_{0},\overline V_{0})=b$, one has
	$$\left|\frac{1}{N-1}\sum_{j=1}^{M}\left(\EE[k_1^N(\widetilde x_1^t-\widetilde x_j^t)]-\EE[ k_1^N(\overline x_1^t-\overline x_j^t)]\right)\right|=0\,.$$
	Collecting \eqref{part1}, \eqref{part2} and \eqref{transresult} we get \eqref{toshow} for $M$ satisfying \eqref{remainingM}, which finishes the proof of \eqref{toshow} for any $M$. Hence we conclude \eqref{c}.

$\bullet \textit{Step 4:}$ Now we prove \eqref{goal}. To see this, we split the summation $\sum_{j\neq 1}^N$ into two parts: the part where $j\in M _{t_n}^t$ and the part where $j\in (M _{t_n}^t )^c$
\begin{align}
\PP\bigg(&\left|  \frac{1}{N-1}\sum_{j\neq 1}^N\left(k_1^N(\widetilde x_1^t-\widetilde x_j^t)-  k_1^N(\overline x_1^t-\overline x_j^t)\right)\right|\geq 2C_{\alpha}N^{2\delta-1}\log(N) \notag \\
&+2C_\alpha\log^2(N)N^{3\lambda_1-\lambda_2} \|k_1^N\|_1\bigg) \leq\PP\left(\mathcal{X}(M_{t_n}^t)\right)+\PP\left(\mathcal{X}((M_{t_n}^t)^c)\right)\,,
\end{align}
where $\mathcal{X}(M_{t_n}^t)$ is defined in \eqref{eventX} and
	\begin{align}\label{eventX1}
\mathcal{X}((M_{t_n}^t)^c):=\bigg\{&\big|\frac{1}{N-1}\sum_{j\in (M_{t_n}^t)^c}\left(k_1^N(\widetilde x_1^t-\widetilde x_j^t)-  k_1^N(\overline x_1^t-\overline x_j^t)\right)\big|\notag \\
&\geq C_{\alpha}N^{2\delta-1}\log(N) +C_\alpha\log^2(N)N^{3\lambda_1-\lambda_2} \|k_1^N\|_1\bigg\}.
\end{align}
For the part in the event $\mathcal{X}((M_{t_n}^t)^c)$ where $j\in (M _{t_n}^t )^c$, it follows from \eqref{b}  that
\begin{align}
 \PP\left(\mathcal{X}((M_{t_n}^t)^c)\right)\leq N^{-\alpha}.
\end{align}
Thus we have
\begin{align}\label{186}
\PP\bigg(&\left|  \frac{1}{N-1}\sum_{j\neq 1}^N\left(k_1^N(\widetilde x_1^t-\widetilde x_j^t)-  k_1^N(\overline x_1^t-\overline x_j^t)\right)\right|\geq 2C_{\alpha}N^{2\delta-1}\log(N) \notag \\
&+2C_\alpha\log^2(N)N^{3\lambda_1-\lambda_2} \|k_1^N\|_1\bigg)\leq \PP\left(\mathcal{X}(M_{t_n}^t)\right) +N^{-\alpha}.
\end{align}

Next we split the summation $\sum_{j\in M _{t_n}^t }$ in the event $\mathcal{X}(M_{t_n}^t)$ \eqref{eventX} into two cases: the case where $\mbox{ card }(M_{t_n}^t)\leq M_\ast$ and the case where $\mbox{ card }(M_{t_n}^t)> M_\ast$. Here $M_\ast$ is defined in \eqref{defM}.
\begin{align}\label{187}
\PP\left(\mathcal{X}(M_{t_n}^t)\right)& \leq \PP\big(\mathcal{X}(M_{t_n}^t)\cap \left\{\mbox{ card }(M_{t_n}^t)\leq M_\ast\right\}\big) +\PP\big(\mathcal{X}(M_{t_n}^t)\cap \left\{\mbox{ card }(M_{t_n}^t)> M_\ast\right\}\big) \notag \\
&\leq\PP\big(\mathcal{X}(M_{t_n}^t)\cap \left\{\mbox{ card }(M_{t_n}^t)> M_\ast\right\}\big)+N^{-\alpha},
\end{align} 
where in the last inequality we used \eqref{c}. 

According to \eqref{a}, for any $t\in[t_n,t_{n+1}]$ one has  
\begin{equation}
\PP\left(\mbox{card }(M_{t_n}^t)> M_\ast\right)\leq N^{-\alpha},
\end{equation}
which leads to
\begin{align}
\PP\big(\mathcal{X}(M_{t_n}^t)\cap \left\{\mbox{ card }(M_{t_n}^t)> M_\ast\right\}\big)\leq \PP\left(\mbox{card }(M_{t_n}^t)> M_\ast\right)\leq N^{-\alpha}.
\end{align}
Therefore it follows from \eqref{187} that
\begin{align}
\PP\left(\mathcal{X}(M_{t_n}^t)\right)\leq 2N^{-\alpha}.
\end{align}
Together with \eqref{186}, it implies
\begin{align}\label{goal1}
\PP\bigg(&\left|  \frac{1}{N-1}\sum_{j\neq 1}^N\left(k_1^N(\widetilde x_1^t-\widetilde x_j^t)-  k_1^N(\overline x_1^t-\overline x_j^t)\right)\right|\geq 2C_{\alpha}N^{2\delta-1}\log(N) \notag \\
&+2C_\alpha\log^2(N)N^{3\lambda_1-\lambda_2} \|k_1^N\|_1\bigg)\leq 3N^{-\alpha}.
\end{align}

 Finally, since the particles are exchangeable, the same result holds for changing $(\widetilde x_1^t,\overline x_1^t)$ in \eqref{goal1} into $(\widetilde x_i^t,\overline x_i^t)$, $i=2,\cdots, N$, which completes the proof of  Lemma \ref{tildaXficedtime}.
\end{proof}

{\bf Acknowledgments:}
H.H. is partially supported by NSFC (Grant No. 11771237). The research of J.-G. L. is partially supported by  KI-Net NSF RNMS (Grant No. 1107444) and NSF DMS
(Grant No. 1812573). 

\section*{Appendix}
\appendix
\renewcommand{\appendixname}{Appendix~\Alph{section}}
\section{Proof of Lemma \ref{transition}}

First, let us consider the fundamental solution $G(x,v, t)$ of the equation
\begin{equation}
\partial_t G  + v\cdot \nabla_xG=\Delta_v G,\;\;G\mid_{t=0}=\delta(x)\delta(v),
\end{equation}
which  can be calculated explicitly as
\begin{equation}\label{gexpl}
G(x,v, t)=C \frac1{t^6}\exp\left(-\frac{|v|^2}{4t}-\frac{3|x- tv/2|^2}{t^3}\right),
\end{equation}
where $C$ is a normalization constant. The following lemma states some estimates of the fundamental solution.
\begin{lem} Let  $G(x,v, t)$ be defined in \eqref{gexpl}
	and $p\in[1,\infty]$. There exists a $C_p$ such that for any $j\in\mathbb{N}_0$ the following holds
	\begin{align}\label{heat1}
	\left\||x|^ j\nabla_v G \right\|_{p,1}\leq C_p t^{\frac{-10p+3jp+9}{2p}}\;,\quad \left\||x|^ j\nabla_x G \right\|_{p,1}\leq C_p t^{\frac{-12p+3jp+9}{2p}}\; ,
	\end{align}
	and 
	\begin{align}\label{heat3}
	\left\|G\left(\cdot-\frac{1}{2}\left(a_i-b_i\right)\right)-G\left(\cdot-\frac{1}{2}\left(b_i-a_i\right)\right)\right\|_{p,1}\leq C_p|a_i-b_i|\left(t^{\frac{-12p+9}{2p}}+t^{\frac{-10p+9}{2p}}\right)\;,
	\end{align}
	as well as
	\begin{align}\label{heat2}
	\left\||\cdot|\left(G\left(\cdot-\frac{1}{2}\left(a_i-b_i\right)\right)-G\left(\cdot-\frac{1}{2}\left(b_i-a_i\right)\right)\right)\right\|_{p,1}\leq C_p|a_i-b_i|\left(t^{\frac{-7p+9}{2p}}+t^{\frac{-9p+9}{2p}}\right)\;.
	\end{align}
	The norm $\|\cdot\|_{p,q}$ denotes the $p$-norm in the $x$ and $q$-norm in the $v$-variable, i.e. for any $f:\mathbb{R}^3\times\mathbb{R}^3\to\mathbb{R} $
	\begin{equation}
	\|f\|_{p,q}:=\left(\int_{\RR^3} \left(\int_{\RR^3} |f(x,v)|^q dv\right)^{p/q}dx\right)^{1/p}.
	\end{equation}	
\end{lem}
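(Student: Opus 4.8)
The plan is to reduce the whole statement to the parabolic scaling structure of the kinetic Fokker--Planck operator, after which only finiteness of a fixed Gaussian integral and some exponent bookkeeping remain. First I would record the change of variables $v=t^{1/2}\hat v$, $x=t^{3/2}\hat x$: under it the exponent in \eqref{gexpl} becomes $-|\hat v|^2/4-3|\hat x-\hat v/2|^2$, so that $G(x,v,t)=Ct^{-6}g(\hat x,\hat v)$ where $g(\hat x,\hat v):=\exp(-|\hat v|^2/4-3|\hat x-\hat v/2|^2)$ is a fixed Schwartz function on $\RR^6$ independent of $t$. Alongside this I would record $\nabla_v=t^{-1/2}\nabla_{\hat v}$, $\nabla_x=t^{-3/2}\nabla_{\hat x}$, $|x|^j=t^{3j/2}|\hat x|^j$, and the transformation of the anisotropic mixed norm: since $dv=t^{3/2}d\hat v$ and $dx=t^{9/2}d\hat x$, one has $\|f\|_{p,1}=t^{3/2+9/(2p)}\|\hat f\|_{p,1}$ where $\hat f(\hat x,\hat v):=f(t^{3/2}\hat x,t^{1/2}\hat v)$ (with $9/(2p)$ read as $0$ when $p=\infty$).

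With these identities in hand, \eqref{heat1} is a one-line computation: $\||x|^j\nabla_vG\|_{p,1}=Ct^{-6}\cdot t^{3j/2}\cdot t^{-1/2}\cdot t^{3/2+9/(2p)}\cdot\||\hat x|^j\nabla_{\hat v}g\|_{p,1}$, and collecting powers of $t$ gives the exponent $(-10p+3jp+9)/(2p)$; replacing $\nabla_v$ by $\nabla_x$ trades $t^{-1/2}$ for $t^{-3/2}$ and yields $(-12p+3jp+9)/(2p)$. The residual constant $\||\hat x|^j\nabla g\|_{p,1}$ is finite because $g$ is Schwartz, so any polynomial times any derivative of $g$ belongs to $L^{p}_{\hat x}L^{1}_{\hat v}$; this is what produces $C_p$.

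For the two difference estimates I would set $\xi:=a_i-b_i\in\RR^6$ and apply the fundamental theorem of calculus along the segment joining $-\tfrac{1}{2}\xi$ and $+\tfrac{1}{2}\xi$, giving $G(\cdot-\tfrac{1}{2}\xi)-G(\cdot+\tfrac{1}{2}\xi)=-\int_{-1/2}^{1/2}\xi\cdot(\nabla_{x,v}G)(\cdot-\theta\xi)\,d\theta$. Taking $\|\cdot\|_{p,1}$, estimating $|\xi\cdot\nabla_{x,v}G|\leq|\xi|(|\nabla_xG|+|\nabla_vG|)$, and using translation invariance of $\|\cdot\|_{p,1}$ in both $x$ and $v$ makes the $\theta$-dependence drop out and leaves $|\xi|(\|\nabla_xG\|_{p,1}+\|\nabla_vG\|_{p,1})$; together with \eqref{heat1} at $j=0$ this is \eqref{heat3}. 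For \eqref{heat2} I would carry the weight through, using $|x|\leq|x-\theta\xi|+\tfrac{1}{2}|\xi|$ after translating, so that the left side is bounded by $|\xi|(\||x|\nabla_xG\|_{p,1}+\||x|\nabla_vG\|_{p,1})+\tfrac{1}{2}|\xi|^2(\|\nabla_xG\|_{p,1}+\|\nabla_vG\|_{p,1})$; the first group is precisely $|\xi|$ times the $j=1$ bounds of \eqref{heat1}, i.e. the claimed exponents $(-7p+9)/(2p)$ and $(-9p+9)/(2p)$, while the symmetric placement of the segment (midpoint at the origin) keeps the remainder quadratic in $|\xi|$ with the milder $j=0$ exponents, so for the bounded values of $|a_i-b_i|$ occurring in the application to Lemma \ref{transition} it is absorbed into the first group.

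I do not anticipate a real obstacle: the only thing that needs care is the exponent bookkeeping, in particular the asymmetry between the factor $t^{3/2}$ coming from integration in $v$ and $t^{9/(2p)}$ from integration in $x$ in the mixed norm $\|\cdot\|_{p,1}$, and the different costs $t^{-1/2}$ versus $t^{-3/2}$ of the derivatives $\nabla_v$ and $\nabla_x$. Once the scaling identities of the first step are pinned down correctly, everything reduces to routine Gaussian calculus, and I would verify the endpoint $p=\infty$ by the same computation with the $x$-exponent set to zero.
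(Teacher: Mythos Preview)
Your scaling argument for \eqref{heat1} is correct and cleaner than the paper's direct computation of the Gaussian integrals; the paper expands $\nabla_vG$ and $\nabla_xG$ explicitly and integrates in $v$ by hand, whereas your change of variables $(x,v)=(t^{3/2}\hat x,t^{1/2}\hat v)$ isolates all the $t$-dependence in one stroke. For \eqref{heat3} you and the paper do the same thing.

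The gap is in \eqref{heat2}. Your remainder $\tfrac12|\xi|^2\bigl(\|\nabla_xG\|_{p,1}+\|\nabla_vG\|_{p,1}\bigr)$ carries the $j=0$ exponents $t^{(-12p+9)/(2p)}$ and $t^{(-10p+9)/(2p)}$, and these are \emph{more} singular at $t\to0$ than the $j=1$ exponents in the claim, not ``milder'' (the $+3jp$ in \eqref{heat1} makes the exponent less negative as $j$ grows). Absorbing the remainder into the stated bound would require $|\xi|\lesssim t^{3/2}$, which is false in general and in particular fails in the regime $t-s\sim N^{-\lambda_1}$, $|a_i-b_i|\sim N^{-\lambda_2}$ used downstream. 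Concretely, at $p=1$ your extra term contributes $C|a_i-b_i|^2\,t^{-3/2}$ to the right side of the $\|\eta_{t,0}^{N}\|_{1,1}$ inequality in the proof of Lemma~\ref{transition}, and that non-integrable singularity prevents the singular Gronwall step there from closing. The paper avoids the quadratic remainder by first rewriting
\[
|x|\bigl(G(\cdot-c)-G(\cdot+c)\bigr)=\bigl(H(\cdot-c)-H(\cdot+c)\bigr)+\text{(terms bounded by }|c|\,G\text{)},\qquad H:=|x|G,
\]
and then applying the fundamental theorem of calculus to $H$ rather than to $G$; since $\nabla_{x,v}H=\tfrac{x}{|x|}G+|x|\nabla_{x,v}G$, this produces only $|a_i-b_i|$ times $\|G\|_{p,1}$ and $\||x|\nabla G\|_{p,1}$, i.e.\ exactly the $j=1$ exponents and a harmless $t^{(-9p+9)/(2p)}$ contribution, with no $|\xi|^2$ term. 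Replacing your triangle-inequality step by this $H$-trick fixes the argument.
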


\begin{proof}
	It is easy to compute that
	\begin{equation}
	G=C \frac1{t^6} \exp\left(-\frac{3|x|^2}{4t^3}\right) \exp\left(-\frac{|v - \frac{3x}{2t}|^2}{t}\right)
	\end{equation}
	and
	\begin{equation}
	\nabla_vG=C\frac1{t^6} \exp\left(-\frac{3|x|^2}{4t^3}\right) \exp\left(-\frac{|v - \frac{3x}{2t}|^2}{t}\right)\left(-\frac{2v}{t}+\frac{3x}{t^2}\right).
	\end{equation}
	
	Now we can do the calculation of $\int_{\RR^3}|G|dv$ and $\int_{\RR^3}|\nabla_vG|dv$:
	\begin{align}\label{G}
	\int_{\RR^3}|G|dv&=C\frac1{t^6} \exp\left(-\frac{3|x|^2}{4t^3}\right) \int_{\RR^3}\exp\left(-\frac{|v - \frac{3x}{2t}|^2}{t}\right)dv\notag\\
	&\leq C\frac1{t^{9/2}} \exp\left(-\frac{3|x|^2}{4t^3}\right),
	\end{align}	
and
	\begin{align}\label{vG}
	\int_{\RR^3}|\nabla_vG|dv&=C\frac1{t^6} \exp\left(-\frac{3|x|^2}{4t^3}\right) \int_{\RR^3}\exp\left(-\frac{|v - \frac{3x}{2t}|^2}{t}\right)\left(-\frac{2v}{t}+\frac{3x}{t^2}\right)dv\notag\\
	&\leq C\frac1{t^5} \exp\left(-\frac{3|x|^2}{4t^3}\right)  \int_{\RR^3}u\exp\left(-u^2\right)du\leq C\frac1{t^5} \exp\left(-\frac{3|x|^2}{4t^3}\right) ,
	\end{align}
		respectively.
	As a direct result from \eqref{G} and  \eqref{vG}, one has
	\begin{equation}
	\left\||\cdot|^jG \right\|_{\infty,1}\leq C\frac1{t^{(9-3j)/2}}\hspace{1cm}\left\||\cdot|^ j\nabla_v G \right\|_{\infty,1}\leq C\frac1{t^{5-3j/2}} .
	\end{equation}
	For $1\leq p<\infty$
	\begin{align}
	\left\||\cdot|^jG \right\|_{p,1}&\leq Ct^{-{9/2}}\left(\int_{\RR^3}|x|^{pj}\exp\left(-\frac{3px^2}{4t^3}\right)dx\right)^{\frac{1}{p}}\notag \\
	&\leq 	C_pt^{-\frac{9}{2}+\frac{9+3pj}{2p}}\left(\int_{\RR^3}|y|^ j\exp\left(-y^2\right)dy\right)^{\frac{1}{p}}\leq C_p t^{\frac{-9p+3jp+9}{2p}},\label{Gnorm}
	\end{align}
	and
	\begin{align}
	\left\||\cdot|^j\nabla_v G \right\|_{p,1}&\leq Ct^{-5}\left(\int_{\RR^3}|x|^ j\exp\left(-\frac{3px^2}{4t^3}\right)dx\right)^{\frac{1}{p}}\notag \\
	&\leq 	C_pt^{-5+\frac{9+3jp}{2p}}\left(\int_{\RR^3}|y|^ j\exp\left(-y^2\right)dy\right)^{\frac{1}{p}}\leq C_p t^{\frac{-10p+3jp+9}{2p}}.
	\end{align}
	We also have
	\begin{align}
	\nabla_xG= \frac1{t^6} \exp\left(-\frac{3|x|^2}{4t^3}\right) \exp\left(-\frac{|v - \frac{3x}{2t}|^2}{t}\right)
	\left(-\frac{6x}{t^2}+\frac{3v}{t}\right),
	\end{align}
	which leads to
	\begin{align}\label{xG}
	\int_{\RR^3}|\nabla_xG|dv&=C\frac1{t^6} \exp\left(-\frac{3|x|^2}{4t^3}\right) \int_{\RR^3}\exp\left(-\frac{|v - \frac{3x}{2t}|^2}{t}\right)\left(-\frac{6x}{t^3}+\frac{3v}{t^2}\right)dv\notag\\
	&\leq C\frac1{t^8} \exp\left(-\frac{3|x|^2}{4t^3}\right) \int_{\RR^3}\exp\left(-\frac{|v - \frac{3x}{2t}|^2}{t}\right)\left(-\frac{2x}{t}+v\right)dv\notag \\
	&\leq Ct^{-8+\frac{3}{2}} \exp\left(-\frac{3|x|^2}{4t^3}\right)  \int_{\RR^3}(\sqrt{t}u-\frac{x}{2t})\exp\left(-u^2\right)du \notag\\
	&\leq C\frac1{t^6} \exp\left(-\frac{3|x|^2}{4t^3}\right)+C\frac1{t^6} \frac{x}{t^{\frac{3}{2}}}\exp\left(-\frac{3|x|^2}{4t^3}\right).
	\end{align}
	It follows from the above that
	\begin{equation}
	\left\||\cdot|^j\nabla_x G \right\|_{\infty,1}\leq C\frac1{t^{6-3j/2}} .
	\end{equation}
	For $1\leq p<\infty$
	\begin{align}
	&\norm{|\cdot|^j\nabla_x G}_{p,1}\notag\\
	\leq& C\frac1{t^6}\left(\left(\int_{\RR^3}|x|^j\exp\left(-\frac{3px^2}{4t^3}\right)dx\right)^{\frac{1}{p}} +\left(\int_{\RR^3}|x|^ j\exp\left(-\frac{3p|x|^2}{4t^3}\right) \left (\frac{x}{t^{\frac{3}{2}}}\right)^pdx\right)^{\frac{1}{p}}\right)\notag \\
	\leq &C_pt^{\frac{-12p+3jp+9}{2p}}\left(\left(\int_{\RR^3}|y|^ j\exp\left(-y^2\right)dy\right)^{\frac{1}{p}}+\left(\int_{\RR^3}|y|^j\exp\left(-py^2\right)|y|^pdy\right)^{\frac{1}{p}}\right)\notag\\
	\leq &C_pt^{\frac{-12p+3jp+9}{2p}},
	\end{align}
	which concludes the proof of \eqref{heat1}.
	
	
	As a direct result of \eqref{heat1} we can prove \eqref{heat3}. Indeed,
	\begin{align}
	&\left|G\left(\cdot-\frac{1}{2}\left(a_i-b_i\right)\right)-G\left(\cdot-\frac{1}{2}\left(b_i-a_i\right)\right)\right|\notag\\
	\leq& |a_i-b_i|\int_0^1\left|\nabla G\left(\cdot-\frac{1}{2}(b_i-a_i)+s(b_i-a_i)\right)\right|ds \notag\\
	\leq & |a_i-b_i|\int_0^1\left|\nabla_v G\left(\cdot-\frac{1}{2}(b_i-a_i)+s(b_i-a_i)\right)\right|ds \notag\\
	&+|a_i-b_i|\int_0^1\left|\nabla_x G\left(\cdot-\frac{1}{2}(b_i-a_i)+s(b_i-a_i)\right)\right|ds,
	\end{align}
	which leads to
	\begin{align}
	&\left\|\left(G\left(\cdot-\frac{1}{2}\left(a_i-b_i\right)\right)-G\left(\cdot-\frac{1}{2}\left(b_i-a_i\right)\right)\right)\right\|_{p,1}\notag \\
	\leq &C|a_i-b_i|\left(\norm{\nabla_v G}_{p,1}+\norm{\nabla_x G}_{p,1}\right)
	\leq C_p|a_i-b_i|\left(t^{\frac{-12p+9}{2p}}+t^{\frac{-10p+9}{2p}}\right).
	\end{align}
	
	Next we prove \eqref{heat2}:
	\begin{align}
	&|\cdot| \left(G\left(\cdot-\frac{1}{2}\left(a_i-b_i\right)\right)-G\left(\cdot-\frac{1}{2}\left(b_i-a_i\right)\right)\right)\notag\\
	\leq& \left(|\cdot-\frac{1}{2}\left(a_i-b_i\right)| G\left(\cdot-\frac{1}{2}\left(a_i-b_i\right)\right)-|\cdot-\frac{1}{2}\left(b_i-a_i\right)| G\left(\cdot-\frac{1}{2}\left(b_i-a_i\right)\right)\right)
	\notag\\&+\frac{1}{2}|a_i-b_i|\left(G\left(\cdot-\frac{1}{2}\left(a_i-b_i\right)\right)+G\left(\cdot-\frac{1}{2}\left(b_i-a_i\right)\right)\right).
	\end{align}
	In view of \eqref{Gnorm}, the $(p,1)$-norm of the terms in the last line have the right bound. With the other term we proceed as above, using the function $H=|\cdot|G$:
	\begin{align}&
	\left(|\cdot-\frac{1}{2}\left(a_i-b_i\right)| G\left(\cdot-\frac{1}{2}\left(a_i-b_i\right)\right)-|\cdot-\frac{1}{2}\left(b_i-a_i\right)| G\left(\cdot-\frac{1}{2}\left(b_i-a_i\right)\right)\right)
	\notag\\\leq& |a_i-b_i|\int_0^1\left|\nabla H\left(\cdot-\frac{1}{2}(b_i-a_i)+s(b_i-a_i)\right)\right|ds \notag\\
	\leq & |a_i-b_i|\int_0^1\left|\nabla_v H\left(\cdot-\frac{1}{2}(b_i-a_i)+s(b_i-a_i)\right)\right|ds \notag\\
	&+|a_i-b_i|\int_0^1\left|\nabla_x H\left(\cdot-\frac{1}{2}(b_i-a_i)+s(b_i-a_i)\right)\right|ds.
	\end{align}
	It follows from our estimates in \eqref{heat1} that
	\begin{align}
	&\left\||\cdot-\frac{1}{2}\left(a_i-b_i\right)| G\left(\cdot-\frac{1}{2}\left(a_i-b_i\right)\right)-|\cdot-\frac{1}{2}\left(b_i-a_i\right)| G\left(\cdot-\frac{1}{2}\left(b_i-a_i\right)\right)\right\|_{p,1}
	\\\leq & C|a_i-b_i|\left(\left\||\cdot|\nabla_v G\right\|_{p,1}+\left\||\cdot|\nabla_x G\right\|_{p,1}+\left\| G\right\|_{p,1}\right)\notag\\
	\leq&C_p|a_i-b_i|\left(t^{\frac{-7p+9}{2p}}+t^{\frac{-9p+9}{2p}}\right),
	\end{align}
	which leads to \eqref{heat2}.
\end{proof}

\begin{proof}[Proof of Lemma \ref{transition}]
	
	The proof of the estimates follows the ideas of  \cite[Lemma 2]{garcia2017}. 
	However, the evolution equation for the present system is more difficult to handle, and in particular, the spacial overlap is suppressed for short periods of time since we have a noise term in the momentum variable only. Both estimates can be proved in the same way. We just give the proof for the more difficult
	part $(ii)$, which can be easily
	adapted for part $(i)$. Without loss of generality we set $s=0$ and $t<1$. What we need to show then  is 
	$$\| u_{t, s}^{a, i,N} -
	u_{t, s}^{b,i, N} \|_{\infty,1} \leqslant C  | a_i - b_i|\left((t - s)^{- 6}+1\right)$$
	holds for all $i=1,\cdots,N.$
	
	Note that the force $\overline k_t^N(x):=k^N\ast \rho^N_t$ we consider is globally Lipschitz and $L^\infty$ because of \eqref{con2}, thus there exists a $C>0$ independent of $N$ such that
	\begin{equation}\label{lipschf}
	\max_{0\leq t\leq T;x,y\in\mathbb{R}^3}\frac{|\overline k_t(x)-\overline k_t(y)|}{|x-y|}\leq C\;.
	\end{equation}

	Let $c_t$ be the trajectory on phase space following the Newtonian equations of motion with respect to the force $\overline k_t^N$, starting with $\frac{1}{2}(a_i+b_i)$ at time $0$, i.e.
	$$c_t=(x^c_t,v^c_t),\hspace{1cm}\frac{d}{dt}x^c_t=v^c_t,\hspace{1cm}  \frac{d}{dt}v^c_t=\overline k_t^N(x^c_t),\hspace{1cm} c_0=\frac{1}{2}(a_i+b_i)\;.  $$
	
	We use the trajectory $c$ to change the frame of inertia that we use to look at $u_{t, s}^{d,i,N}$ for $d\in\{a,b\}$, i.e. we define for any $t>0$ the density $w_{t, 0}^{a,i,N}$ on phase space by
	\begin{equation}
	w_{t, 0}^{a,i,N}((x,v)):=u_{t, 0}^{a,i,N}((x,v)+c_t)\,.
	\end{equation}

	From the evolution equation of $u_{t, s}^{d,i,N}$ for $d\in\{a,b\}$ and $c_t$ one gets directly
	\begin{equation}\label{defw}
	\frac{\partial}{\partial t} w_{t, 0}^{d,i,N}\left(x,v\right):=\Delta_v w_{t, 0}^{d,i,N}\left(x,v\right)-\nabla_x w_{t, 0}^{d,i,N}\cdot v-\nabla_v w_{t, 0}^{d,i,N}\cdot \left(\overline k_t^N\left(x+x^c_t\right)-\overline k_t^N\left(x^c_t\right)\right),
	\end{equation}  
	with $w_{0, 0}^{a,i,N}=\delta\left(\cdot-\left(\frac{1}{2}\left(a_i-b_i\right)\right)\right)$ and $w_{0, 0}^{b,i,N}=\delta\left(\cdot-\left(\frac{1}{2}\left(b_i-a_i\right)\right)\right)$.
	
	Since $w$ is built from $u$ by  translation we have for any $1\leq p \leq\infty$
	\begin{equation}\label{diffu}
	\| u_{t, 0}^{a,i,N} -
	u_{t, 0}^{b,i,N} \|_{p,1}=\| w_{t, 0}^{a,i,N} -
	w_{t, 0}^{b,i,N} \|_{p,1}\;.
	\end{equation}
	
	Before proceeding we would like to explain the advantage of looking at $w$ instead of $u$  first  on a heuristic level.   The difficulties arise when dealing with short periods of  time.  There the $u^{d}$, $d\in\{a,b\}$ are roughly given by a Gaussian around the center at $\frac{1}{2}(a+b)$, respectively the $w^ {d}$ are roughly given by a Gaussian  around the center at $0$.  Here the force term of $w$ -- which is zero at $x=0$ -- suppresses the last term of \eqref{defw}. Thus $w $ will be very close to the  heat-kernel $G_t$ of our time evolution.
	
	Using \eqref{defw} and the properties of the heat kernel we get 
	\begin{align}\nonumber
	w_{t, 0}^{a,i,N}=& G_t\ast \delta\left(\cdot-\left(\frac{1}{2}\left(a_i-b_i\right)\right)\right) - \int_{0}^t G_{t-s}\ast  \left(\nabla_v w_{s, 0}^{a,i,N}\cdot \left(\overline k_s\left(\cdot+x^c_s\right)-\overline k_s\left(x^c_s\right)\right)\right) ds
	\\=& G_t\left(\cdot-\frac{1}{2}\left(a_i-b_i\right)\right) - \int_{0}^t \nabla_vG_{t-s}\ast\left( w_{s, 0}^{a,i,N} \left(\overline k_s\left(\cdot+x^c_s\right)-\overline k_s\left(x^c_s\right)\right)\right)  ds ,\label{wformula}
	\end{align}
	and
	\begin{align*}
	w_{t, 0}^{b,i,N}= G_t\left(\cdot-\frac{1}{2}\left(b_i-a_i\right)\right) - \int_{0}^t \nabla_vG_{t-s}\ast\left( w_{s, 0}^{b,i,N} \left(\overline k_s\left(\cdot+x^c_s\right)-\overline k_s\left(x^c_s\right)\right)\right)  ds,
	\end{align*}
	thus
	\begin{align}\label{diffw}
	w_{t, 0}^{a,i,N}-w_{t, 0}^{b,i,N}=&  \left(G_t\left(\cdot-\frac{1}{2}\left(a_i-b_i\right)\right)-G_t\left(\cdot-\frac{1}{2}\left(b_i-a_i\right)\right)\right) \\&- \int_{0}^t \nabla_vG_{t-s}\ast\left( \left( w_{s, 0}^{a,i,N}-w_{s, 0}^{b,i,N}\right) \left(\overline k_s\left(\cdot+x^c_s\right)-\overline k_s\left(x^c_s\right)\right)\right)  ds.\nonumber
	\end{align}
	
	Defining $\eta_{t, 0}^{ N}:\mathbb{R}^6\to\mathbb{R}^+_0$ by
	$\eta_{t, 0}^{ N}(x,v):=|(x,v)|\left|w_{t, 0}^{a,i,N}-w_{t, 0}^{b,i,N}\right|$ and using   \eqref{lipschf}, we can find a constant $C$ such that
	\begin{align}\label{etaform}
	\eta_{t, 0}^{ N}\leq & |\cdot| \left|G_t\left(\cdot-\frac{1}{2}\left(a_i-b_i\right)\right)-G_t\left(\cdot-\frac{1}{2}\left(b_i-a_i\right)\right)\right| \\&+  C \left|\int_{0}^t \nabla_v G_{t-s}\ast\eta_{s, 0}^{ N} ds\right|.\nonumber
	\end{align}
	Using the properties of the heat kernel \eqref{heat1}, \eqref{heat2} and Young's inequality  in \eqref{etaform}, we get
	\begin{align}
	\left\|\eta_{t, 0}^{ N}\right\|_{1,1}\leq & C|a_i-b_i|+  C  \int_{0}^t (t-s)^{-\frac{1}{2}}\left\|\eta_{s, 0}^{ N} \right\|_{1,1}ds.
	\end{align}
	Applying a generalized Gronwall's inequality with weak singularities \cite[Lemma 7.1.1]{henry2006geometric} leads to
	\begin{equation}\label{l1}
	\|\eta_{t, 0}^{N}\|_{1,1}\leq C |a_i-b_i| \hspace{1cm}\text{ uniform in }t\in[0,T]\;.
	\end{equation} 
	Further \eqref{etaform} gives for any $1\leq p\leq\infty$ and $t\in[0,T]$
	\begin{align}\label{use}
	\left\|\eta_{t, 0}^{N}\right\|_{p,1}\leq & \left\||\cdot|G_t\left(\cdot-\frac{1}{2}\left(a_i-b_i\right)\right)-G_t\left(\cdot-\frac{1}{2}\left(b_i-a_i\right)\right)\right\|_{p,1} \\&+  C\int_{0}^{t/2} \left\|\nabla_v G_{t-s}\ast\eta_{s, 0}^{ N}\right\|_{p,1}ds
	+  C \int_{t/2}^t  \left\|\nabla_v G_{t-s}\ast\eta_{s, 0}^{ N}\right\|_{p,1} ds.\nonumber
	\end{align}
	Using Young's inequality we get for $1+p^{-1}=\frac{9}{10}+q^{-1}$ and $t\in[0,T]$,
	\begin{align*}
	\left\|\eta_{t, 0}^{ N}\right\|_{p,1}\leq & C|a_i-b_i|t^{\frac{-9p+9}{2p}} +  C \int_{0}^{t/2} \left\|\nabla_v G_{t-s}\right\|_{p,1} \left\|\eta_{s, 0}^{ N}\right\|_{1,1} ds
	\\&+  C \int_{t/2}^{t} \left\|\nabla_v G_{t-s}\right\|_{10/9,1} \left\|\eta_{s, 0}^{N}\right\|_{q,1} ds.
	\end{align*}
	Due to \eqref{heat1}, one has $\left\|\nabla_v G_{t-s}\right\|_{10/9,1}\leq C (t-s)^{-19/20}$. This and \eqref{l1} give
	\begin{align}\label{iteration}
	\left\|\eta_{t, 0}^{ N}\right\|_{p,1}\leq  C|a_i-b_i|t^{\frac{-9p+9}{2p}} +  C|a_i-b_i| \int_{0}^{t/2} \left\|\nabla_v G_{t-s}\right\|_{p,1} ds
	+  C \max_{t/2\leq s \leq t}\left\|\eta_{s, 0}^{N}\right\|_{q,1}.
	\end{align}

	We use this formula starting at $p_1=1$ and setting $p_{k+1} =  \frac{10p_k}{10-p_k}$. Therefore, starting with our estimate for $ \left\|\eta_{t, 0}^{a,i,N}\right\|_{1,1}$ (see \eqref{l1}) we can then iteratively estimate the $L^p$ norms of $\eta_{t, 0}^{ N}$ for
	higher exponents, i.e.
	\begin{align}\label{iteration1}
	\left\|\eta_{t, 0}^{ N}\right\|_{p_{k+1},1}\leq  C|a_i-b_i|t^{\frac{-9p_{k+1}+9}{2p_{k+1}}} +  C|a_i-b_i| \int_{0}^{t/2} \left\|\nabla_v G_{t-s}\right\|_{p_{k+1},1} ds
	+  C \max_{t/2\leq s \leq t}\left\|\eta_{s, 0}^{N}\right\|_{p_{k},1}.
	\end{align}
	The exponent $p_{k+1} = \infty$ is attained  after $k = 10$ steps.  
	It follows that
	\begin{align}\label{etaest}
	\left\|\eta_{t, 0}^{ N}\right\|_{\infty,1}\leq  C|a_i-b_i|(t^{\frac{-9}{2}}+1)\;.
	\end{align}

	Having good control of $\|\eta_{t, 0}^{ N}\|_{\infty,1}$ we can now estimate 
	$w_{t, 0}^{a,i,N}-w_{t, 0}^{b,i,N}$ using \eqref{diffw}:
	\begin{align}
	\left\|w_{t, 0}^{a,i,N}-w_{t, 0}^{b,i,N}\right\|_{\infty,1}\leq&  \left\|G_t\left(\cdot-\frac{1}{2}\left(a_i-b_i\right)\right)-G_t\left(\cdot-\frac{1}{2}\left(b_i-a_i\right)\right)\right\|_{\infty,1} \\&+ \int_{0}^t \left\| \nabla_vG_{t-s}\ast\left( \left( w_{s, 0}^{a,i,N}-w_{s, 0}^{b,i,N}\right) \left(\overline k_s\left(\cdot+x^c_s\right)-\overline k_s\left(x^c_s\right)\right)\right)\right\|_{\infty,1}  ds\nonumber
	\\\leq&  C|a_i-b_i|t^{-6} +C\int_{0}^{t/2} \left\| \nabla_vG_{t-s}\right\|_{\infty,1} \left\|\eta_{s, 0}^{N}\right\|_{1,1} ds\nonumber
	\\&+\int_{t/2}^t \left\| \nabla_vG_{t-s}\right\|_{1,1} \left\|\eta_{s, 0}^{N}\right\|_{\infty,1} ds\nonumber
	\\\leq&   C|a_i-b_i|t^{-6}+C|a_i-b_i|\int_{0}^{t/2}(t-s)^{-5}ds+C|a_i-b_i|\int_{t/2}^t(t-s)^{-\frac{1}{2}} (s^{-\frac{9}{2}}+1)ds \notag\\
	\leq & C|a_i-b_i|\left(t^{-6}+t^{-4}+t^{\frac{1}{2}}\right)\leq C|a_i-b_i|(t^{-6}+1).
	\end{align}
	With \eqref{diffu} statement $(ii)$ of the lemma follows.

\end{proof}

\newpage

\bibliography{meanfield}
\bibliographystyle{abbrv}
\end{document}